\newtheorem{thm}{Theorem}[section]
\newtheorem*{thm*}{Theorem}
\newtheorem{lemma}[thm]{Lemma}
\newtheorem{prop}[thm]{Proposition}
\newtheorem{cor}[thm]{Corollary}
\newtheorem*{cor*}{Corollary}
\theoremstyle{definition}
\newtheorem{defn}[thm]{Definition}
\newtheorem{example}[thm]{Example}
\newtheorem{examples}[thm]{Examples}
\theoremstyle{remark}
\newtheorem{remark}[thm]{Remark}
\newcommand {\Ea}    {\ensuremath{\mbox{$\mathcal{E}$}}}
\newcommand {\Fa}    {\ensuremath{\mbox{$\mathcal{F}$}}}
\newcommand {\Na}    {\ensuremath{\mbox{$\mathcal{N}$}}}
\newcommand {\Oa}    {\ensuremath{\mbox{$\mathcal{O}$}}}
\newcommand {\real}  {\ensuremath{\mathbb{R}}}
\newcommand {\intg}  {\ensuremath{\mathbb{Z}}}
\newcommand {\cplx}  {\ensuremath{\mathbb{C}}}
\newcommand {\Hom}   {\ensuremath{\operatorname{Hom}}}
\newcommand {\smlhf} {\ensuremath{\mbox{$\frac{1}{2}$}}}
\newcommand {\Homeo} {\ensuremath{\operatorname{Homeo}}}
\newcommand {\pt}    {\ensuremath{\operatorname{pt}}}
\newcommand {\id}    {\ensuremath{\operatorname{id}}}
\newcommand {\rel}   {\ensuremath{{\operatorname{rel}}}}
\newcommand {\hotimes}   {\ensuremath{\widehat{\otimes}}}
\newcommand {\catc}   {\ensuremath{\mathbf{C}}}
\newcommand {\catf}   {\ensuremath{\mathbf{F}}}
\newcommand {\catm}   {\ensuremath{\mathbf{M}}}
\newcommand {\catz}   {\ensuremath{\mathbf{Z}}}
\newcommand {\vect}   {\ensuremath{\mathbf{Vect}}}
\newcommand {\nat}   {\ensuremath{\mathbb{N}}}
\newcommand {\bool}   {\ensuremath{\mathbb{B}}}
\newcommand {\dom} {\ensuremath{\operatorname{dom}}}
\newcommand {\cod} {\ensuremath{\operatorname{cod}}}
\newcommand {\Ob} {\ensuremath{\operatorname{Ob}}}
\newcommand {\Mor} {\ensuremath{\operatorname{Mor}}}
\newcommand {\fun} {\ensuremath{\operatorname{Fun}}}
\newcommand {\bfun} {\ensuremath{\mathbf{Fun}}}
\newcommand {\TT}   {\ensuremath{\mathbb{T}}}
\newcommand {\LL}   {\ensuremath{\mathbb{L}}}
\newcommand {\iv} {\ensuremath{\operatorname{iv}}}
\newcommand {\sign} {\ensuremath{\operatorname{sign}}}
\newcommand {\mti}   {\ensuremath{M\times [0,1]}}
\newcommand {\nti}   {\ensuremath{N\times [0,1]}}
\newcommand {\gA}   {\ensuremath{\mathfrak{A}}}
\newcommand {\Cob} {\ensuremath{\operatorname{Cob}}}
\newcommand {\cut} {{\ensuremath{\operatorname{cut}}}}
\newcommand {\ovf}  {\ensuremath{\overline{\mathcal{F}}}}
\begin{document}

\UseComputerModernTips


\title{Positive Topological Quantum Field Theories}

\author{Markus Banagl}

\address{Mathematisches Institut, Universit\"at Heidelberg,
  Im Neuenheimer Feld 288, 69120 Heidelberg, Germany}

\email{banagl@mathi.uni-heidelberg.de}

\thanks{The author was in part supported by a research grant of the
 Deutsche Forschungsgemeinschaft.}

\date{March, 2013}

\subjclass[2010]{57R56, 81T45, 16Y60}

\keywords{Topological quantum field theory, complete semirings, function semialgebras,
topological manifolds}


\begin{abstract}
We propose a new notion of positivity for topological field theories (TFTs), based on
S. Eilenberg's concept of completeness for semirings. We show that a complete ground semiring,
a system of fields on manifolds and a system of action functionals on these fields
determine a positive TFT. The main feature of such a theory is a 
semiring-valued topologically invariant state sum that satisfies a gluing formula. 
The abstract framework has been carefully designed to cover a wide range of
phenomena. For instance, we derive P\'olya's counting theory in combinatorics from state sum
identities in a suitable positive TFT. Several other concrete examples are discussed,
among them Novikov signatures of fiber bundles over spacetimes and arithmetic
functions in number theory. In the future, we will employ the
framework presented here in constructing a new differential topological invariant that detects
exotic smooth structures on spheres. 
\end{abstract}

\maketitle


\tableofcontents


\section{Introduction}

We propose a mathematically rigorous new method for constructing topological field theories
(TFT), which allows for action functionals that take values in any monoidal category, not just 
real (or complex) values. The second novelty of our approach is that state sums
(path integrals) are based on an algebraic notion of completeness in semirings,
introduced by S. Eilenberg in his work \cite{eilenbergalm} on formal languages and automata,
based on ideas of J. H. Conway \cite{conwayregalg}.
This enables us to bypass the usual measure theoretic problems 
associated with traditional path integrals over spaces of fields.
On the other hand, the oscillatory nature of the classical path integral,
and consequently, associated asymptotic expansions via the method of
stationary phase, are not
necessarily retained in our approach. Instead, a principle of topological permanence
tends to hold: Once the action functional detects a topological feature, this feature will
remain present in the state sum invariant.
Roughly, a semiring is a ring that has only
``positive'' elements, that is, has no negatives
(and is thus sometimes also called a ``rig''). A key insight of Eilenberg was that the
absence of negatives allows for the notion of completeness, reviewed in 
Section \ref{sec.monssemirings} of the present paper.
As our state sums will
have values in semirings, we call the resulting theories \emph{positive} TFTs.
In this context one may also recall that every additively idempotent semiring
has a canonical partial order, and in this order every element is nonnegative.
In their work \cite{baezdolanhdimalg}, J. C. Baez and J. Dolan remark on page 6100:
``In Physics, linear algebra is usually done over $\real$ or $\cplx$, but for
higher-dimensional linear algebra, it is useful to start more generally with
any commutative rig'' and
``one reason we insist on such generality is to begin grappling with the remarkable fact
that many of the important vector spaces in physics are really defined over the
natural numbers, meaning that they contain a canonical lattice with a basis of
`positive' elements. Examples include the weight spaces of semisimple
Lie algebras, fusion algebras in conformal field theory, and thanks to the work
of Kashiwara and Lusztig on canonical bases, the finite-dimensional
representations of quantum groups.''
Thus this emerging theme of positivity and semirings in physics is also
reflected in the present paper. \\

A positive TFT possesses the following features, which may be taken
as axioms. These axioms are close to Atiyah's axioms \cite{atiyahtqft},
but differ in some respects, as we will discuss shortly.
As far as the algebraic environment is concerned,
instead of working with vector spaces over a field such as the real or
complex numbers, a positive TFT is defined over a pair $(Q^c, Q^m)$
of semirings, which have the same underlying additive monoid $Q$.
Furthermore, a functional tensor product $E \hotimes F$
of two function semimodules $E,F$ is used, which in the idempotent regime
is due to G. L. Litvinov, V. P. Maslov and G. B. Shpiz \cite{litmasshpiztensor}.
It comes with a canonical map
$E\otimes F \to E\hotimes F$ (where $\otimes$ is the algebraic tensor product), 
which is generally neither surjective nor even
injective (contrary to the analogous map over fields such as the complex
numbers). It can be conceptualized as a completion of the image of
the map which sends an algebraic tensor product $f\otimes g$ to the
function $(x,y)\mapsto f(x)g(y)$. This completion is necessary because
Example \ref{exple.statesumdoesnotdecomp} shows that state sums need not lie in the image of the
algebraic tensor product. The two multiplications on $Q$ induce
two generally different tensor products $f \hotimes_c g$ and $f\hotimes_m g$ of two
$Q$-valued functions $f,g$. 

An $(n+1)$-dimensional \emph{positive TFT} $Z$ assigns to every closed topological $n$-manifold
$M$ a semialgebra $Z(M)$ over both $Q^c$ and $Q^m$, called the \emph{state module}
of $M$, and to every $(n+1)$-dimensional topological bordism $W$ with boundary $\partial W$
an element
\[ Z_W \in Z(\partial W), \]
called the \emph{state sum} of $W$. We adopt the viewpoint that the 
latter is the primary invariant, while the
state module itself is of lesser importance. This assignment satisfies the following properties:

\begin{enumerate}
\item If $M$ and $N$ are closed $n$-manifolds and $M\sqcup N$ their (ordered) disjoint
union, then there is an isomorphism
\[ Z(M\sqcup N) \cong Z(M) \hotimes Z(N) \]
of $Q^c$-semialgebras and of $Q^m$-semialgebras.

\item The state module $Z(-)$ is a covariant
functor on the category of closed topological $n$-manifolds and homeomorphisms. 
In particular, the group $\operatorname{Homeo}(M)$ of self-homeomorphisms
$M\to M$ acts on $Z(M)$.

\item \emph{Isotopy Invariance}: Isotopic homeomorphisms $\phi, \psi: M\to N$ induce
 equal isomorphisms $\phi_\ast = \psi_\ast: Z(M)\to Z(N)$. In particular, the action
 of $\Homeo (M)$ on $Z(M)$ factors through the mapping class group.

\item The state sum $Z_W$ is a topological invariant: 
 If $\phi: W\to W'$ is a homeomorphism and $\phi_{\partial}$ its restriction
 to the boundary, then $\phi_{\partial \ast} (Z_W) = Z_{W'}$, where
 $\phi_{\partial \ast}: Z(\partial W)\to Z(\partial W')$ denotes the
 isomorphism induced by $\phi_\partial$.

\item The state sum $Z_{W\sqcup W'} \in Z(\partial W \sqcup \partial W')$ of a
disjoint union of bordisms $W$ and $W'$ is the tensor
product
\[ Z_{W\sqcup W'} = Z_W \hotimes_m Z_{W'} \in
  Z(\partial W)\hotimes Z(\partial W') \cong Z(\partial W \sqcup \partial W'). \]

\item \emph{Gluing}: For $n$-manifolds $M,N,P$, and
   vectors $z\in Z(M)\hotimes Z(N),$ $z' \in Z(N)\hotimes Z(P),$ 
  there is a contraction product
\[ \langle z,z' \rangle \in Z(M)\hotimes Z(P), \]
 which involves the product $\hotimes_c$.
Let $W'$ be a bordism from $M$ to $N$ and let $W''$ be a bordism from
$N$ to $P$. Let $W = W' \cup_N W''$ be the bordism from $M$ to $P$ obtained
by gluing $W'$ and $W''$ along $N$. Then the state sum of $W$ can be
calculated as the contraction product
\[ Z_W = \langle Z_{W'}, Z_{W''} \rangle \in Z(M)\hotimes Z(P)\cong Z(M\sqcup P). \]

\item The state modules $Z(M)$ have the additional structure of a noncommutative
 Frobenius semialgebra over $Q^m$ and over $Q^c$. This means that there is a
 counit functional $\epsilon_M: Z(M)\to Q$, which is a $Q$-bisemimodule
 homomorphism such that the bilinear form
\[ Z(M)\times Z(M)\to Q,~ (z,z')\mapsto \epsilon_M (zz') \]
 is nondegenerate. Maps induced on state modules by homeomorphisms respect
 the Frobenius counit.

\item For every closed $n$-manifold, there is an element $\gA (M)\in Z(M)$,
the \emph{coboundary aggregate}, which is topologically invariant:
If $\phi: M\to N$ is a homeomorphism, then $\phi_\ast \gA (M) = \gA (N)$.
\end{enumerate}

The coboundary aggregate of an $n$-manifold has no counterpart in 
classical TFTs and is a somewhat surprising new kind of topological invariant.
In a classical \emph{unitary} theory, the $Z(M)$ have natural nondegenerate 
Hermitian structures. What may substitute to some extent for this
in positive TFTs is the Frobenius counit, constructed in Section \ref{sec.frobenius}. 
The counit is available in every dimension $n$ and has nothing to do with
pairs of pants.
Atiyah's axioms imply that
state modules are always finite dimensional, but on p. 181 of
\cite{atiyahtqft}, he indicates that allowing state modules to be 
infinite dimensional may be necessary in interesting examples of TFTs.
The state modules of a positive TFT are indeed usually infinitely generated, though
finitely generated modules can result from using very small systems of fields
on manifolds. Such systems do in fact arise in practice, for example in TFTs with
a view towards finite group theory, such as the Freed-Quinn theory
\cite{quinnfreed}, 
\cite{quinnlectaxiomtqft}, \cite{freedlecnotes}.
Frobenius algebras, usually assumed to be finite dimensional over a field,
have been generalized by Jans \cite{jans} to infinite dimensions.
Atiyah's classical axioms also demand that 
the state sum $Z_{M\times I}$ of a cylinder $W=M\times I$ be the identity
when viewed as an endomorphism.
The map $Z_{M\times I}$ on $Z(M)$ should be the
``imaginary time''  evolution operator $e^{-tH}$ (where $t$ is the length
of the interval $I$), so Atiyah's axiom means that the Hamiltonian
$H=0$ and there is no dynamics along a cylinder.
We do not require this for positive TFTs and will allow interesting
propagation along the cylinder. In particular, we do not phrase positive
TFTs as monoidal functors on bordism categories, as this would imply that
the cylinder, which is an identity morphism in the bordism category, would
have to be mapped to an identity morphism. For various future applications, this is
not desirable. Since $Z_{M\times I}$ need not be the identity, it can also
generally not be deduced in a positive TFT that $Z_{M\times S^1} =
\dim Z(M)$, an identity that would not make sense in the first place,
as $Z(M)$ need not have finite dimension.
The present paper does not make substantial use of the bordism category. 
Nor do we (yet) consider $n$-categories or
manifolds with corners in this paper. Our manifolds will usually
be topological, but we will indicate the modifications necessary to
deal with smooth manifolds.\\

The main result of the present paper is that any system of fields $\Fa$ on
manifolds together with a system of action functionals $\TT$ on these fields gives
rise in a natural way to a positive topological field theory $Z$.
Our framework is not limited to particular dimensions and will produce a TFT
in any dimension.
Systems of fields are axiomatized in Definition \ref{def.fields}. 
As in \cite{kirktqft} and \cite{freedlecnotes},
they are to satisfy
the usual properties with respect to restrictions and action of homeomorphisms.
The key properties are that they must decompose on disjoint unions as
a cartesian product with factors associated to the components and it must
be possible to glue two fields along a common boundary component on which
they agree. Every field on the glued space must be of this form.
Rather than axiomatizing actions on fields (which would lead to
additive axioms), we prefer to axiomatize the exponential of an action
directly, since it is the exponential that enters into the Feynman path integral.
Also, this yields multiplicative axioms, which is closer to the nature of
TFTs, as TFTs are multiplicative, rather than additive (the former corresponding
to the quantum nature of TFTs and the latter
corresponding to homology theories).
Let $\catc$ be a (strict, small) monoidal category. We axiomatize systems $\TT$ of 
$\catc$-valued action exponentials in Definition \ref{def.actions}.
Given a system $\Fa$ of fields, $\TT$ consists of
functions $\TT_W$ that associate
to every field on an $(n+1)$-dimensional bordism $W$ a morphism in $\catc$.
For a disjoint union, one requires 
$\TT_{W\sqcup W'} (f) = \TT_W (f|_W) \otimes \TT_{W'} (f|_{W'})$
for all fields $f\in \Fa (W\sqcup W').$
If $W=W'\cup_N W''$ is obtained by gluing a bordism
$W'$ with outgoing boundary $N$ to a bordism $W''$ with incoming boundary
$N$, then
$\TT_W (f) = \TT_{W''} (f|_{W''}) \circ \TT_{W'} (f|_{W'})$
for all fields $f\in \Fa (W)$. Also, $\TT$ behaves as expected under the action
of homeomorphisms on fields.
These axioms express that the action ought to be local to a certain extent. \\

In Section \ref{sec.semiringmoncat}, motivated by the group algebra
$L^1 (G)$ in harmonic analysis and by the categorical algebra $R[\catc]$
over a ring $R$, we show that $\catc$, together with
an arbitrary choice of an Eilenberg-complete ground semiring $S$, 
determines a pair $(Q^c, Q^m)$ of complete semirings with the same underlying
additive monoid $Q = Q_S (\catc)$, using certain convolution
formulae. 
The composition law $\circ$ in $\catc$ determines the multiplication
$\cdot$ for $Q^c$, while the tensor functor $\otimes$ in $\catc$
(i.e. the monoidal structure) determines the multiplication $\times$
for $Q^m$.\\

We then construct a positive TFT $Z$ for
any given system $\Fa$ of fields and $\catc$-valued action exponentials $\TT$
in Section \ref{sec.constft}.
For a closed manifold $M$, the state vectors are $Q$-valued functions on 
the set of fields $\Fa (M)$ on $M$, solving a certain constraint equation.
Fields on closed $n$-manifolds act as boundary conditions for state sums.
The state sum of a bordism $W$ is the vector given on a boundary field
$f\in \Fa (\partial W)$ by 
\[ Z_W (f) = \sum_{F\in \Fa (W,f)} T_W (F) \in Q, \]
where we sum over all fields $F$ on $W$ which restrict to $f$ on
the boundary. The terms $T_W (F)$ correspond to the exponential
of the action evaluated on $F$ and are characteristic functions determined by $\TT_W$.
The key technical point is that this sum uses the infinite summation
law on a complete semiring and thus yields a well-defined element
of $Q$, despite the fact that the sum may well involve uncountably
many nonzero terms. It is precisely at this point, where Eilenberg's ideas
are brought to bear and where using semirings that are not rings
is crucial. The formalism developed here thus has strong ties to lattice theory
as well as to areas of logic and computer science such as automata theory
and formal languages. It may therefore be viewed as a contribution to
implementing the program envisioned by Baez and Stay in \cite{baezstayrosetta}.

Theorem \ref{thm.statetopinv} establishes the topological invariance
of the state sum. Isotopy invariance of induced maps is provided by
Theorem \ref{thm.isotopyinv}.
The state sum of a disjoint union is calculated in
Theorem \ref{thm.zdisjunion}, while the gluing formula is
the content of Theorem \ref{thm.statesumgluing}.
In Section \ref{sec.linearreps}, we show how
matrix-valued positive TFTs arise from monoidal functors into linear categories.
An important technical role is played by the Schauenburg tensor product. 
Section \ref{sec.cylidemproj} carries out a study of cylindrical state sums.
We observe that such a state sum is idempotent as an immediate consequence
of the gluing theorem. Given closed manifolds $M$ and $N$, we use the
state sum of the cylinder on $M$ (alternatively $N$) to construct a projection
operator $\pi_{M,N}: Z(M\sqcup N)\to Z(M\sqcup N)$, which acts as the
identity on all state sums of bordisms $W$ from $M$ to $N$. There is
a formal analogy to integral transforms given by an integral kernel:
The kernel corresponds to the state sum of the cylinder.
Furthermore, we analyze to what extent the projection of a tensor
product of states breaks up into a tensor product of projections of these states.
In Section \ref{sec.polya}, we derive P\'olya's theory of counting, using
positive TFT methods.
The final Section \ref{sec.examples} gives additional concrete examples and application
patterns. After considering two toy-examples, we construct two more interesting
examples based on the signature of manifolds and bundle spaces.
The final example focuses on multiplicative arithmetic functions arising in number theory.
In a forthcoming paper, we apply the framework of positive topological
field theories presented here in constructing a new high-dimensional TFT defined
on smooth manifolds, which is expected to detect exotic smooth structures on spheres.

A remark on notation: Hoping that no confusion will ensue as a consequence, we use the letter $I$ 
for unit objects of monoidal categories and sometimes also for the unit interval $[0,1]$.

\section{Monoids, Semirings, and Semimodules}
\label{sec.monssemirings}

We recall some foundational material on monoids, semirings and semimodules
over semirings. Such structures seem to have appeared first in Dedekind's study of ideals in a
commutative ring: one can add and multiply two ideals, but one cannot subtract them.
The theory has been further developed by H. S. Vandiver, S. Eilenberg, A. Salomaa,
J. H. Conway, J. S. Golan and many others.
Roughly, a semiring is a ring without general additive inverses. More precisely,
a \emph{semiring} is a set $S$ together with two binary operations $+$ and $\cdot$
and two elements $0,1\in S$ such that $(S,+,0)$ is a commutative monoid,
$(S,\cdot, 1)$ is a monoid, the multiplication $\cdot$ distributes over the addition
from either side, and $0$ is absorbing, i.e. $0\cdot s = 0 = s \cdot 0$ for every $s\in S$.
If the monoid $(S,\cdot,1)$ is commutative, the semiring $S$ is called commutative.
The addition on the \emph{Boolean monoid} $(\bool,+,0)$, $\bool =\{ 0,1 \},$ is the unique
operation such that $0$ is the neutral element and $1+1=1$. 
The Boolean monoid becomes
a commutative semiring by defining $1\cdot 1 =1$. (Actually, the multiplication on $\bool$ is
completely forced by the axioms.) A morphism of
semirings sends $0$ to $0$, $1$ to $1$ and respects addition and multiplication. \\

Let $S$ be a semiring. A \emph{left $S$-semimodule} is a commutative monoid
$(M,+,0_M)$ together with a scalar multiplication $S\times M \to M,$
$(s,m)\mapsto sm,$ such that for all $r,s\in S,$ $m,n\in M,$ we have
$(rs)m=r(sm),$ $r(m+n)=rm+rn,$ $(r+s)m=rm+sm,$ $1m=m,$ and
$r0_M = 0_M = 0m.$ Right semimodules are defined similarly using
scalar multiplications $M\times S \to M,$ $(m,s)\mapsto ms$.
Given semirings $R$ and $S$, an \emph{$R$-$S$-bisemimodule} is a commutative
monoid $(M,+,0)$, which is both a left $R$-semimodule and a right $S$-semimodule
such that $(rm)s=r(ms)$ for all $r\in R,$ $s\in S,$ $m\in M$. (Thus the notation
$rms$ is unambiguous.)
An \emph{$R$-$S$-bisemimodule homomorphism} is a homomorphism
$f:M\to N$ of the underlying additive monoids such that $f(rms) =rf(m)s$ for all $r,m,s$. 
If $R=S$, we shall also speak of an $S$-bisemimodule.
Every semimodule $M$ over a commutative semiring $S$ can and will be
assumed to be both a left and right semimodule with $sm=ms$.
In fact, $M$ is then a bisemimodule, as for all $r,s\in S,$ $m\in M,$
\[ (rm)s = s(rm) = (sr)m = (rs)m = r(sm) = r(ms). \]
In this paper, we will often refer to elements of a semimodule as ``vectors''.

Let $S$ be any semiring, not necessarily commutative.
Regarding the tensor product of a right $S$-semimodule $M$ and 
a left $S$-semimodule $N$, one
has to exercise caution because even when $S$ is commutative,
two nonisomorphic tensor products,
both called \emph{the} tensor product of $M$ and $N$ and both written
$M\otimes_S N$, exist in the literature. A map $\phi: M\times N\to A$ into
a commutative additive monoid $A$ is called \emph{middle $S$-linear}, if
it is biadditive, $\phi (ms,n)=\phi (m,sn)$ for all $m,s,n$, and $\phi (0,0)=0$.
For us, an (algebraic) \emph{tensor product of
$M$ and $N$} is a commutative monoid $M\otimes_S N$ (written additively) satisfying the
following (standard) universal property: $M \otimes_S N$ comes
equipped with a middle $S$-linear map
$M\times N \to M\otimes_S N$ such that
given any commutative monoid $A$ and 
middle $S$-linear map $\phi: M\times N \to A$, there exists a unique monoid homomorphism
$\psi: M\otimes _S N \to A$ such that
\begin{equation} \label{equ.uniproptens}
\xymatrix{
M\times N \ar[r]^{\phi} \ar[d] & A \\
M\otimes_S N \ar@{..>}[ru]_{\psi}
} \end{equation}
commutes. The existence of such a tensor product is shown for example in
\cite{katsovtensor}, \cite{katsov}. To construct it, take $M\otimes_S N$ to be the quotient 
monoid $F/\sim$, where $F$ is the free commutative monoid generated
by the set $M\times N$ and $\sim$ is the congruence relation on $F$
generated by all pairs of the form
\[ ((m+m', n), (m,n)+(m',n)),~ ((m,n+n'), (m,n)+(m,n')),~
  ((ms,n), (m,sn)), \]
$m,m'\in M,$ $n,n' \in N,$ $s\in S$. 
If $M$ is an $R$-$S$-bisemimodule and $N$ an $S$-$T$-bisemimodule, then
the monoid $M\otimes_S N$ as constructed above becomes an $R$-$T$-bisemimodule
by declaring
\[ r\cdot (m\otimes n) = (rm)\otimes n,~ (m\otimes n)\cdot t = m\otimes (nt). \]
If in diagram (\ref{equ.uniproptens}), the monoid $A$ is an $R$-$T$-semimodule
and $M\times N\to A$ satisfies
\[ \phi (rm,n) = r\phi(m,n),~ \phi (m,nt)=\phi(m,n)t \]
(in addition to being middle $S$-linear;
let us call such a map \emph{$R_S T$-linear}), then the uniquely determined monoid map 
$\psi: M\otimes_S N\to A$ is an $R$-$T$-bisemimodule homomorphism, for
\[ \psi (r(m\otimes n)) = \psi ((rm)\otimes n) = \phi (rm,n)=r\phi(m,n)=r\psi(m\otimes n) \]
and similarly for the right action of $T$.
If $R=S=T$ and $S$ is commutative, the above means that
the commutative monoid
$M\otimes_S N$ is an
$S$-semimodule with $s(m\otimes n) = (sm) \otimes n = m\otimes (sn)$ and
the diagram (\ref{equ.uniproptens}) takes place in the category of $S$-semimodules.\\

The tensor product of \cite{takahashi} and \cite{golan} --- let us here write it
as $\otimes'_S$ --- satisfies a different
universal property. A semimodule $C$
is called \emph{cancellative} if $a+c=b+c$ implies $a=b$ for all $a,b,c\in C$.
A monoid $(M,+,0)$ is \emph{idempotent} if $m+m =m$ for
all elements $m\in M$. For example, the Boolean monoid $\bool$ is idempotent.
A nontrivial idempotent semimodule is never cancellative.
Given an arbitrary right $S$-semimodule $M$ and an arbitrary left $S$-semimodule $N$, 
the product $M\otimes'_S N$ is always
cancellative. If one of the two 
semimodules, say $N$, is idempotent, then $M\otimes'_S N$ is idempotent as well,
since $m\otimes' n + m\otimes' n = m\otimes' (n+n) = m\otimes' n$.
Thus if one of $M, N$ is idempotent, then $M\otimes'_S N$ is
trivial, being both idempotent and cancellative. Since in our applications, we desire
nontrivial tensor products of idempotent semimodules, the product $\otimes'_S$
will not be used in this paper. \\

The key feature of the constituent algebraic structures of state modules that will 
allow us to form well-defined state sums
is their completeness. Thus let us recall the notion of a complete monoid, semiring, etc.
as introduced by S. Eilenberg on p. 125 of \cite{eilenbergalm}; see also
\cite{karner}, \cite{krob88}.
A \emph{complete monoid} is a commutative monoid $(M,+,0)$ together with
an assignment $\sum$, called a \emph{summation law}, which assigns to every
family $\{ m_i \}_{i\in I}$ of elements $m_i \in M$, indexed by an arbitrary set $I$, an element
$\sum_{i\in I} m_i$ of $M$ (called the \emph{sum} of the $m_i$), such that
\[ \sum_{i\in \varnothing} m_i =0,~
  \sum_{i\in \{ 1 \}} m_i = m_1,~
 \sum_{i\in \{ 1,2 \}} m_i = m_1 + m_2, \]
and for every partition $I = \bigcup_{j\in J} I_j,$
\[ \sum_{j\in J} \left( \sum_{i\in I_j} m_i \right) = \sum_{i\in I} m_i. \]
Note that these axioms imply that if $\sigma: J\to I$ is a bijection, then
\[ \sum_{i\in I} m_i = \sum_{j\in J} \sum_{i\in \{ \sigma (j) \}} m_i =
 \sum_{j\in J} m_{\sigma (j)}. \]
Also, since a cartesian product $I\times J$ comes with two canonical
partitions, namely $I\times J = \bigcup_{i\in I} \{ i \}\times J =
  \bigcup_{j\in J} I\times \{ j \},$ one has
\begin{equation} \label{equ.sumijmonoid}
\sum_{(i,j)\in I\times J} m_{ij} = \sum_{i\in I} \sum_{j\in J} m_{ij} =
 \sum_{j\in J} \sum_{i\in I} m_{ij} 
\end{equation}
for any family $\{ m_{ij} \}_{(i,j)\in I\times J}$. This is the analog of
Fubini's theorem in the theory of integration.
Given $(M,+,0)$, the summation law $\sum$, if it exists, is not in general
uniquely determined by the addition, as examples in \cite{goldstern85} show.
For a semiring $S$ to be complete one requires that $(S,+,0,\sum)$ be a
complete monoid and adds the infinite distributivity requirements
\[ \sum_{i\in I} ss_i = s \big( \sum_{i\in I} s_i \big),~
  \sum_{i\in I} s_i s = \big( \sum_{i\in I} s_i \big) s. \]
Note that in a complete semiring, the sum over any family of zeros must be
zero, as $\sum_{i\in I} 0 = \sum_i (0\cdot 0) = 0\cdot \sum_i 0 = 0.$
Complete left, right and bisemimodules are defined analogously.
If $\{ s_{ij} \}_{(i,j)\in I\times J}$ is a family in a complete semiring
of the form $s_{ij} = s_i t_j,$ then, using (\ref{equ.sumijmonoid}) together
with the infinite distributivity requirements,
\[ \sum_{(i,j)\in I\times J} s_i t_j = \sum_{i\in I} \sum_{j\in J} s_i t_j =
 \big( \sum_{i\in I} s_i \big) \big( \sum_{j\in J} t_j \big). \]
A semiring is \emph{zerosumfree}, if $s+t=0$ implies $s=t=0$ for all
$s,t$ in the semiring. Clearly, every complete semiring is zerosumfree.
In a ring we have additive inverses, which means that a nontrivial ring
is never zerosumfree and so cannot be endowed with an infinite summation
law that makes it complete. This shows that giving up additive inverses,
thereby passing to semirings that are not rings, is an essential prerequisite
for completeness and in turn essential for the construction of our
topological field theories.

\begin{examples}  \label{exple.completesemirings}
The Boolean semiring $\bool$ is complete with respect to the summation law
\[ \sum_{i\in I} b_i = \begin{cases} 0,& \text{ if } b_i =0 \text{ for all } i, \\
 1,& \text{ otherwise.} \end{cases} \]
Let $\nat^{\infty} = \nat \cup \{ \infty \},$ where $\nat = \{ 0,1,2,\ldots \}$
denotes the natural numbers. Then
$(\nat^{\infty}, +,\cdot, 0,1)$ is a semiring by extending addition and multiplication on $\nat$
via the rules
\[ n+\infty = \infty +n = \infty + \infty = \infty,~ n\in \nat, \]
\[ n\cdot \infty = \infty \cdot n = \infty \text{ for } n\in \nat,~ n>0, \]
\[ \infty \cdot \infty = \infty,~ 0\cdot \infty = \infty \cdot 0 =0. \]
It is complete with
\[ \sum_{i\in I} n_i = \sup \{ \sum_{i\in J} n_i ~|~ J\subset I,~ J \text{ finite} \}. \]
Let $\real_+$ denote the nonnegative real numbers and
$\real^{\infty}_+ = \real_+ \cup \{ \infty \}$. Extending addition and multiplication
as in the case of natural numbers, one obtains the complete
semiring $(\real^{\infty}_+, +,\cdot, 0,1)$.
The tropical semirings
$(\nat^{\infty}, \min, +, \infty, 0)$ and $(\real^{\infty}_+, \min, +, \infty, 0)$,
that is, the sum is the minimum of two numbers and the product is the ordinary addition
of numbers, are complete.
Let $\overline{\nat} = \nat \cup \{ -\infty, \infty \}$ and
$\overline{\real}_+ = \real_+ \cup \{ -\infty, \infty \}$. 
The arctic semirings
$(\overline{\nat}, \max, +, -\infty, 0)$ and $(\overline{\real}_+, \max, +, -\infty, 0)$
are complete. Let $\Sigma$ be a finite alphabet and let $\Sigma^\ast$ be the
free monoid generated by $\Sigma$. Its neutral element is the empty word
$\epsilon$. A subset of $\Sigma^\ast$ is called a \emph{formal language over}
$\Sigma$. Define the product of two formal languages $L,L' \subset \Sigma^\ast$
by
\[ L\cdot L' = \{ ww' ~|~ w\in L,~ w' \in L' \}. \]
Then $(2^{\Sigma^\ast}, \cup, \cdot, \varnothing, \{ \epsilon \})$ is a semiring,
the semiring of formal languages over $\Sigma$. It is complete.
For a set $A$, the power set $2^{A\times A}$ is the set of binary relations $R$ over $A$.
Define the product of two relations $R,R'$ over $A$ by
\[ R\cdot R' = \{ (a,a') ~|~ \exists a_0 \in A: (a,a_0)\in R,~ (a_0,a')\in R' \}. \]
Let $\Delta = \{ (a,a) ~|~ a\in A \}$ be the diagonal. Then 
$(2^{A\times A}, \cup, \cdot, \varnothing, \Delta)$ is a semiring, the semiring of
binary relations over $A$. It is complete.
Any bounded distributive lattice defines a semiring $(L,\vee,\wedge,0,1)$.
It is complete if $L$ is a join-continuous complete lattice, that is,
every subset of $L$ has a supremum in $L$ and
$a\wedge \bigvee_{i\in I} a_i = \bigvee_{i\in I} (a\wedge a_i)$ for any
subset $\{ a_i ~|~ i\in I \} \subset L$.
For example, the ideals of a ring form a complete lattice.
If $S$ is a complete semiring, then the
semiring $S[[q]]$ of formal power series over $S$ becomes a complete semiring
by transferring the summation law on $S$ pointwise to $S[[q]]$, see
\cite{karner}. Completeness of $S$ also implies the completeness of
the semirings of square matrices over $S$. 
If $\{ M_i \}_{i\in I}$ is a family of complete $S$-semimodules, then their
product $\prod_{i\in I} M_i$ is a complete $S$-semimodule.
A semiring $S$ is \emph{additively idempotent} if $s+s=s$ for all $s\in S$.
Every additively idempotent semiring can be embedded in a complete
semiring, \cite{golanwang}.
\end{examples}.

Let $R,S$ be any semirings, not necessarily commutative.
An (associative, unital) \emph{$R$-$S$-semialgebra} is a semiring $A$ which is in addition
an $R$-$S$-bisemimodule such that for all $a,b\in A,$ $r\in R,$ $s\in S$,
\[ r(ab)=(ra)b,~ (ab)s=a(bs). \]
(Note that we refrain from using the term ``$R$-$S$-bisemialgebra'' for such a
structure, since ``bialgebra'' refers to something completely different,
namely a structure with both multiplication and comultiplication.)
If $R=S$, we shall also use the term \emph{two-sided $S$-semialgebra}
for an $S$-$S$-semialgebra.
If $S$ is commutative, then a two-sided $S$-semialgebra $A$ with $sa=as$ is simply
a semialgebra over $S$ in the usual sense, as
\[ (sa)b=s(ab)=(ab)s=a(bs)=a(sb). \]
A morphism of $R$-$S$-semialgebras is a morphism of semirings which is in addition
a $R$-$S$-bisemimodule homomorphism.

\section{Function Semialgebras}
\label{sec.funsemimods}

Let $S$ be a semiring. Given a set $A$, let
\[ \fun_S (A) = \{ f:A\to S \} \]
be the set of all $S$-valued functions on $A$. If $S$ is understood,
we will also write $\fun (A)$ for $\fun_S (A)$.

\begin{prop} \label{prop.funsemimod}
Using pointwise addition and multiplication, $\fun_S (A)$ inherits the
structure of a two-sided $S$-semialgebra from the operations of $S$. 
If $S$ is complete (as a semiring), then $\fun_S (A)$ is complete as a semiring and as
an $S$-bisemimodule. If $S$ is commutative, then $\fun_S (A)$ is a commutative
$S$-semialgebra.
\end{prop}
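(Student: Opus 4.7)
The plan is to define all operations on $\fun_S (A)$ pointwise and verify that each required axiom reduces to the corresponding axiom in $S$, evaluated argument by argument. Concretely, I would set
\[ (f+g)(a) := f(a)+g(a), \quad (fg)(a) := f(a)g(a), \quad (sf)(a) := s\cdot f(a), \quad (fs)(a) := f(a)\cdot s, \]
with neutral elements given by the constant functions $0$ and $1$. Each semiring axiom (associativity and commutativity of $+$, associativity of $\cdot$, two-sided distributivity, absorption by $0$, neutrality of $0$ and $1$) is an equation between functions, and two functions agree iff they agree at every $a\in A$; so each such identity follows by evaluating at an arbitrary $a$ and invoking the corresponding identity in $S$. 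The same pointwise principle establishes the left $S$-semimodule, right $S$-semimodule, and bisemimodule axioms $(rf)s = r(fs)$, as well as the two-sided semialgebra compatibilities $r(fg)=(rf)g$ and $(fg)s = f(gs)$. Commutativity of $\fun_S (A)$ when $S$ is commutative is immediate from $f(a)g(a)=g(a)f(a)$.

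For completeness, assuming $S$ is a complete semiring, I would transfer the summation law $\sum$ on $S$ pointwise: given a family $\{ f_i \}_{i\in I}$ in $\fun_S (A)$, define
\[ \Big( \sum_{i\in I} f_i \Big)(a) := \sum_{i\in I} f_i (a), \]
which is well-defined because each $\{ f_i (a) \}_{i\in I}$ is a family in the complete monoid $(S,+,0,\sum)$. The empty-sum, one-term, and two-term normalizations hold pointwise. For the partition axiom, given $I=\bigcup_{j\in J} I_j$, evaluate both sides at $a$ and apply the partition axiom in $S$ to $\{ f_i (a) \}_{i\in I}$; this yields equality of functions. The infinite distributivity laws
\[ \sum_{i\in I} s f_i = s \sum_{i\in I} f_i, \qquad \sum_{i\in I} f_i s = \Big( \sum_{i\in I} f_i \Big) s \]
(and likewise for multiplication by a fixed $g\in \fun_S (A)$) again reduce, upon evaluation at $a$, to infinite distributivity in $S$. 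This simultaneously verifies completeness of $\fun_S (A)$ as a semiring and as an $S$-bisemimodule.

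The proof involves no real obstacle; every step is a routine transfer from $S$ to $\fun_S (A)$ via pointwise evaluation. If there is any subtlety to flag, it is only to be explicit that the pointwise summation law is indeed a summation law in the sense of Eilenberg (i.e.\ that one must check the partition axiom, not merely finite additivity), and that the bisemimodule and semiring structures are simultaneously complete because scalar multiplication by a fixed $s\in S$ and multiplication by a fixed $g\in \fun_S (A)$ both commute with the pointwise summation by the infinite distributivity in $S$. No special hypothesis on $A$ is required, so the statement holds for an arbitrary set $A$.
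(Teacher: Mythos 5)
Your proof is correct and follows essentially the same route as the paper: define all operations (including the infinite summation law) pointwise and reduce every axiom to the corresponding axiom of $S$ by evaluating at an arbitrary $a\in A$. The paper's own proof is in fact terser, merely stating the pointwise definitions and asserting the verification, so your explicit check of the partition axiom and infinite distributivity is a harmless elaboration of the same argument.
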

\begin{proof}
Define $0\in \fun (A)$ to be $0(a)=0\in S$ for all $a\in A$ and define
$1\in \fun (A)$ to be $1(a)=1\in S$ for all $a\in A$. Given $f,g\in \fun (A)$,
define $f+g \in \fun (A)$ and $f\cdot g \in \fun (A)$ by
$(f+g)(a) = f(a)+g(a),$ $(f\cdot g)(a)=f(a)\cdot g(a)$
for all  $a\in A$. Then
$(\fun (A),+,0)$ is a commutative monoid and $(\fun (A),\cdot, 1)$ is a monoid, 
which is commutative if $S$ is commutative. The distributive laws hold and the $0$-function
is absorbing.
Thus $(\fun (A),+,\cdot,0,1)$ is a semiring. 
Given $s\in S,$ define $sf, fs\in \fun (A)$ by
$(sf)(a) = s\cdot (f(a)),$ $(fs)(a)=(f(a))\cdot s,$ respectively, for all $a\in A$. This makes
$\fun (A)$ into a a two-sided $S$-semialgebra. 
If $S$ is a complete semiring, then an infinite summation law in $\fun_S (A)$
can be introduced by
\[ \big( \sum_{i\in I} f_i \big) (a) = \sum_{i\in I} (f_i (a)), \]
$f_i \in \fun (A)$. With this law, $(\fun_S (A),+,0,\sum)$ is a complete
monoid, $\fun_S (A)$ is complete as a semiring and
complete as an $S$-bisemimodule.
\end{proof}

Let $B$ be another set. Then, regarding $\fun_S (A)$ and $\fun_S (B)$ as
$S$-bisemimodules, the tensor product $\fun_S (A)\otimes_S \fun_S (B)$ is
defined. It is an $S$-bisemimodule such that given any $S$-bisemimodule $M$ and
$S_S S$-linear map $\phi: \fun (A)\times \fun (B)\to M,$ there exists a unique
$S$-$S$-bisemimodule homomorphism $\psi: \fun (A)\otimes_S \fun (B)\to M$ such that
\[ \xymatrix{
\fun (A)\times \fun (B) \ar[r]^>>>>>{\phi} \ar[d] & M \\
\fun (A)\otimes_S \fun (B) \ar[ru]_{\psi}
} \]
commutes. The $S$-bisemimodule $\fun (A\times B)$ comes naturally
equipped with an $S_S S$-linear map
\[ \beta: \fun (A)\times \fun (B) \longrightarrow \fun (A\times B),~
  \beta (f,g) = ((a,b)\mapsto f(a)\cdot g(b)). \]
(If $S$ is commutative, then $\beta$ is $S$-bilinear.)
Thus, taking $M = \fun_S (A\times B)$ in the above diagram,
there exists a unique
$S$-$S$-bisemimodule homomorphism $\mu: \fun (A)\otimes_S \fun (B)\to \fun (A\times B)$ such that
\begin{equation} \label{equ.funatimesb} 
\xymatrix{
\fun (A)\times \fun (B) \ar[r]^>>>>>{\beta} \ar[d] & \fun (A\times B) \\
\fun (A)\otimes_S \fun (B) \ar[ru]_{\mu}
} \end{equation}
commutes. In the commutative setting, this homomorphism was studied in \cite{banagl-tensor}, where we
showed that it is generally neither surjective nor injective when $S$ is not
a field. If $A$ and $B$ are finite, then $\mu$ is an isomorphism.
If $A,B$ are infinite but $S$ happens to be a field, then $\mu$ is still injective,
but not generally surjective. This is
the reason why the functional analyst completes the tensor product $\otimes$ using
various topologies available, arriving at products $\hotimes$.
For example, for compact Hausdorff spaces $A$ and $B$, let $C(A),C(B)$ 
denote the Banach spaces of all complex-valued continuous functions on
$A, B,$ respectively, endowed with the supremum-norm, yielding the topology of
uniform convergence. Then the image of 
$\mu: C(A)\otimes C(B) \to C(A\times B)$, while not all of $C(A\times B),$
is however dense in $C(A\times B)$ by the Stone-Weierstra{\ss} theorem.
After completion, $\mu$ induces an isomorphism
$C(A) \hotimes_\epsilon C(B) \cong C(A\times B)$
of Banach spaces, where $\hotimes_\epsilon$ denotes the so-called
$\epsilon$-tensor product or injective tensor product of two locally convex
topological vector spaces. For $n$-dimensional Euclidean space $\real^n$,
let $L^2 (\real^n)$ denote the Hilbert space of square integrable functions
on $\real^n$. Then $\mu$ induces an isomorphism
$L^2 (\real^n) \hotimes L^2 (\real^m) \cong L^2 (\real^{n+m}) =
L^2 (\real^n \times \real^m),$ where $\hotimes$ denotes the Hilbert space
tensor product, a completion of the algebraic tensor product $\otimes$ of two Hilbert spaces.
For more information on topological tensor products see 
\cite{schatten}, \cite{grothendiecktensor}, \cite{treves}.
In \cite{banagl-tensor} we show that even over the smallest complete (in particular
zerosumfree) and additively idempotent commutative semiring, namely the Boolean semiring $\bool$,
and for the smallest infinite cardinal $\aleph_0$, modeled by a countably infinite set $A$,
the map $\mu$ is not surjective, which means that in the context of the present paper,
some form of completion must be used as well.
However, there is an even more serious complication which arises over semirings:
In marked contrast to the
situation over fields, the canonical map $\mu$ ceases to be \emph{injective} in general
when one studies functions with values in a semiring $S$.
Given two infinite sets $A$ and $B$, we construct explicitly 
in \cite{banagl-tensor}
a commutative, additively idempotent
semiring $S= S(A,B)$ such that $\mu: \fun_S (A) \otimes \fun_S (B) \to 
\fun_S (A\times B)$ is
not injective. This has the immediate consequence that in 
performing functional analysis over a semiring which is not a field, one
cannot identify the function $(a,b)\mapsto f(a)g(b)$ on $A\times B$ with $f\otimes g$ for 
$f\in \fun_S (A)$, $g\in \fun_S (B)$. Thus the algebraic tensor product is not the correct device
to formulate positive topological field theories.

In the boundedly complete idempotent setting, Litvinov, Maslov and Shpiz have
constructed in \cite{litmasshpiztensor} a tensor product, let us here write it as $\hotimes$, which 
for bounded functions does not exhibit the above deficiencies
of the algebraic tensor product. Any idempotent semiring $S$ is a partially ordered
set with respect to the order relation $s\leq t$ if and only if $s+t=t$; $s,t\in S$.
Then the addition has the interpretation of a least upper bound,
$s+t = \sup \{ s,t \}$. The semiring $S$ is called \emph{boundedly complete}
($b$-complete) if every subset of $S$ which is bounded above has a supremum.
(The supremum of a subset, if it exists, is unique.) The above semiring $S(A,B)$
is $b$-complete.
Given a $b$-complete
commutative idempotent semiring $S$ and $b$-complete idempotent semimodules
$V,W$ over $S$, Litvinov, Maslov and Shpiz define a tensor product $V\hotimes W$,
which is again idempotent and $b$-complete. The fundamental difference to the
algebraic tensor product lies in allowing \emph{infinite} sums of elementary tensors.
A linear map $f:V\to W$ is called $b$-linear if $f(\sup V_0) =\sup f(V_0)$ for every
bounded subset $V_0 \subset V$. The canonical map $\pi: V\times W \to V\hotimes W$
is $b$-bilinear. For each $b$-bilinear map $f: V\times W \to U$ there exists a
unique $b$-linear map $f_{\hotimes}: V\hotimes W \to U$ such that
$f=f_{\hotimes} \pi$. Given any set $A$, let $\mathcal{B}_S (A)$ denote the
set of bounded functions $A\to S$. Then $\mathcal{B}_S (A)$ is a $b$-complete
idempotent $S$-semimodule. According to \cite[Prop. 5]{litmasshpiztensor},
$\mathcal{B}_S (A) \hotimes \mathcal{B}_S (B)$ and $\mathcal{B}_S (A\times B)$
are isomorphic for arbitrary sets $A$ and $B$. Note that when $S$ is complete,
then every $S$-valued function is bounded and thus 
\[ \fun_S (A) \hotimes \fun_S (B) = \mathcal{B}_S (A)\hotimes \mathcal{B}_S (B) \cong
    \mathcal{B}_S (A\times B) = \fun_S (A\times B). \]
In light of the above remarks and in order to make function semialgebras into
a monoidal category (cf. Proposition \ref{prop.funsmonoidal}), we adopt the following definition.
\begin{defn} \label{def.funtensorprod}
The \emph{functional tensor product} $\fun_S (A) \hotimes \fun_S (B)$
of two function semimodules $\fun_S (A)$ and $\fun_S (B)$ is given by
\[ \fun_S (A) \hotimes \fun_S (B) = \fun_S (A\times B). \]
\end{defn}
Diagram (\ref{equ.funatimesb}) can be rewritten as
\[ 
\xymatrix{
\fun (A)\times \fun (B) \ar[r]^>>>>>{\beta} \ar[d] & \fun (A) \hotimes \fun (B) \\
\fun (A)\otimes_S \fun (B) \ar[ru]_{\mu}
} \]
\begin{remark}  \label{rem.wrongfuncttensprod}
It might be tempting to define the functional tensor product of $\fun (A)$ and
$\fun (B)$ in a different way, namely as the subsemimodule of $\fun (A\times B)$
consisting of all functions $F:A\times B\to S$ that can be written in the form
\[ F(a,b) = \sum_{i=1}^k f_i (a)g_i (b) \]
for some $f_i \in \fun (A),$ $g_i \in \fun (B)$, in other words, let the functional tensor product be the
image of $\mu$. Such a definition
would be incorrect, however, since the resulting smaller semimodule would in
general be too small to contain the main invariant of a TFT, the state sum,
as constructed in Section \ref{sec.constft}. In Example \ref{exple.statesumdoesnotdecomp}, we
construct a system of fields on manifolds and action functionals such that the resulting state sum is
not in the image of $\mu$.
\end{remark}

Given two functions $f\in \fun (A),$ $g\in \fun (B)$ we call
$f\hotimes g = \beta (f,g)$ the (functional) \emph{tensor product} of $f$ and $g$.
We note that $f\hotimes g$ depends on the multiplication on $S$. If the
underlying additive monoid of $S$ is equipped with a different multiplication,
then $f\hotimes g$ will of course change.
For the functional tensor product with $S$ we have
\[ \fun (A)\hotimes S \cong \fun (A) \hotimes \fun (\{ * \}) =
  \fun (A\times \{ * \}) \cong \fun (A). \]

For fixed $S$, $\fun_S (-)$ is a contravariant functor $\fun_S: \mathbf{Sets}\to 
S\text{-}S\text{-}\mathbf{SAlgs}$ from the category of sets to the category of two-sided
$S$-semialgebras: A morphism $\phi:A\to B$ of sets induces a
morphism of two-sided $S$-semialgebras $\fun (\phi): \fun (B)\to \fun (A)$ by setting
$\fun (\phi)(f) = f\circ \phi$. 
Clearly, $\fun (\id_A) = \id_{\fun (A)}$ and
$\fun (\psi \circ \phi) = \fun (\phi)\circ \fun (\psi)$ for
$\psi: B\to C$.
\begin{prop}
The functor $\fun_S (-)$ is faithful. 
\end{prop}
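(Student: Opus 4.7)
The plan is to show injectivity of $\phi \mapsto \fun(\phi)$ on hom-sets by a direct diagnostic using characteristic functions. So I would take two distinct set maps $\phi, \psi : A \to B$ and produce an element $f \in \fun_S (B)$ on which $\fun (\phi)$ and $\fun (\psi)$ disagree.

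Concretely, since $\phi \neq \psi$, choose $a_0 \in A$ with $\phi (a_0) \neq \psi (a_0)$, and set $b_0 = \phi (a_0)$. Define the characteristic function $\chi_{b_0} \in \fun_S (B)$ by $\chi_{b_0} (b_0) = 1 \in S$ and $\chi_{b_0} (b) = 0 \in S$ for $b \neq b_0$. Unpacking the definition of $\fun (\phi) (g) = g \circ \phi$, evaluate at $a_0$:
\[ (\fun (\phi) (\chi_{b_0})) (a_0) = \chi_{b_0} (\phi (a_0)) = \chi_{b_0} (b_0) = 1, \]
\[ (\fun (\psi) (\chi_{b_0})) (a_0) = \chi_{b_0} (\psi (a_0)) = 0, \]
the latter because $\psi (a_0) \neq b_0$. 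Provided $0 \neq 1$ in $S$, these two values differ, so $\fun (\phi) (\chi_{b_0}) \neq \fun (\psi) (\chi_{b_0})$ as elements of $\fun_S (A)$, and hence $\fun (\phi) \neq \fun (\psi)$ as morphisms of two-sided $S$-semialgebras.

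The only point that requires comment is the hypothesis $0 \neq 1$ in $S$: if $S$ were the trivial semiring with $0 = 1$, then $\fun_S (X)$ is the trivial semialgebra for every set $X$, every induced map is trivial, and faithfulness fails as soon as some hom-set $\Hom_{\mathbf{Sets}} (A, B)$ has more than one element. I expect the proposition to be read under the standing (usually tacit) assumption that the ground semiring is nontrivial; this is the only genuine subtlety, and it is not really an obstacle but rather a convention to record at the start of the argument.
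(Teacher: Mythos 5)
Your proof is correct and uses essentially the same argument as the paper: test the two induced maps on the characteristic function of $\phi(a_0)$ (the paper phrases it as the contrapositive, assuming $\fun(\phi)=\fun(\psi)$ and deducing $\phi=\psi$ pointwise). Your remark about needing $0\neq 1$ in $S$ is a fair observation that applies equally to the paper's own version of the argument, where it is left tacit.
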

\begin{proof}
Given functions $\phi, \psi: A\to B$
such that $\fun (\phi) = \fun (\psi),$ we know that
$f(\phi (a))=f(\psi (a))$ for all $f\in \fun (B)$ and $a\in A$. 
Taking $f$ to be the characteristic function $\chi_{\phi (a)}$,
given by $\chi_{\phi (a)} (b) = \delta_{\phi (a),b},$ we find that
$\chi_{\phi (a)}(\psi (a)) = \chi_{\phi (a)} (\phi (a)) = 1,$ whence
$\phi (a) = \psi (a)$.
\end{proof}
Let $\bfun_S$ be the category whose objects are $\fun_S (A)$ for all sets $A$,
and whose morphisms are those morphisms of two-sided $S$-semialgebras
$\fun_S (B)\to \fun_S (A)$ that have the form $\fun_S (\phi)$ for some
$\phi:A\to B$. The preceding remarks imply that this is indeed a category with
the obvious composition law.
For $\fun (\phi):\fun (B)\to \fun (A)$ and $\fun (\phi'): \fun (B')\to \fun (A'),$
we define
\[ \fun (\phi) \hotimes \fun (\phi') :\fun (B)\hotimes \fun (B')  \longrightarrow
  \fun (A)\hotimes \fun (A') \]
to be
\[  \fun (\phi \times \phi'):
  \fun (B\times B') \longrightarrow
  \fun (A\times A'), \]
where $\phi \times \phi': A\times A' \to B\times B'$ is the cartesian product
$(\phi \times \phi')(a,a')=(\phi (a), \phi' (a'))$.
In this manner, 
the functional tensor product becomes a functor $\hotimes: \bfun_S \times \bfun_S \to \bfun_S$.
Define the unit object of $\bfun_S$ to be $I = \fun_S (\{ * \}) = S$ and
define associators $a$ by
\[ a_{A,B,C} = \fun (\alpha_{A,B,C}):
\fun (A)\hotimes (\fun (B)\hotimes \fun (C)) \stackrel{\cong}{\longrightarrow}
 (\fun (A) \hotimes \fun (B))\hotimes \fun (C), \]
where
$\alpha_{A,B,C}: (A\times B)\times C \stackrel{\cong}{\longrightarrow}
 A\times (B\times C)$
is the standard associator that makes $\mathbf{Sets}$ into a monoidal
category, given by $\alpha_{A,B,C} ((a,b),c)=(a,(b,c)),$
$a\in A,$ $b\in B,$ $c\in C$. 
Define left unitors $l$ by
\[ l_A = \fun (\lambda_A): I\hotimes \fun (A) = \fun (\{ * \} \times A) 
\stackrel{\cong}{\longrightarrow} \fun (A), \]
where $\lambda_A: A\to \{ * \}\times A$ is the bijection
$\lambda_A (a)=(*,a)$.
Similarly, right unitors $r$ are defined as
\[ r_A = \fun (\rho_A): \fun (A) \hotimes I = \fun (A \times \{ * \}) 
\stackrel{\cong}{\longrightarrow} \fun (A), \]
where $\rho_A: A\to A \times \{ * \}$ is the bijection
$\rho_A (a)=(a,*)$. A braiding $b$ is given by
\[ b_{A,B} = \fun (\beta_{B,A}): \fun (A)\hotimes \fun (B) 
\stackrel{\cong}{\longrightarrow} \fun (B)\hotimes \fun (A), \]
with $\beta_{B,A}: B\times A \stackrel{\cong}{\longrightarrow}
 A\times B,$ $\beta_{B,A} (b,a)=(a,b).$ Then it is a routine task to verify the
following assertion:
\begin{prop} \label{prop.funsmonoidal}
The septuple $(\bfun_S, \hotimes, I, a, l, r, b)$ is a symmetric monoidal category.
\end{prop}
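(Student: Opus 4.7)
The plan is to transfer the well-known symmetric monoidal structure on $(\mathbf{Sets},\times,\{*\})$ along the contravariant functor $\fun_S(-)$ and verify that the coherence diagrams in $\bfun_S$ are the images under $\fun_S$ of the coherence diagrams in $\mathbf{Sets}$. Since $\fun_S$ sends $A\times B$ to $\fun(A)\hotimes\fun(B)$ by the very definition of $\hotimes$, and since $\fun_S$ is faithful, this is a bookkeeping exercise.

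First I would check that $a_{A,B,C}$, $l_A$, $r_A$, $b_{A,B}$ are morphisms in $\bfun_S$: this is automatic since each is of the form $\fun(\sigma)$ for a bijection $\sigma$ of sets. For naturality of, say, the associator, suppose morphisms $\fun(\phi):\fun(A')\to\fun(A)$, $\fun(\psi):\fun(B')\to\fun(B)$, $\fun(\chi):\fun(C')\to\fun(C)$ are given. The naturality square I must check becomes, after unfolding the definitions of $\hotimes$ on morphisms, the image under $\fun$ of the naturality square for the cartesian associator $\alpha$ in $\mathbf{Sets}$ (with arrows reversed by contravariance). Since $\alpha$ is natural, that square commutes, and applying $\fun$ yields naturality of $a$. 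The same argument disposes of $l$, $r$, and $b$.

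Next I would verify the pentagon, triangle, and hexagon axioms. Each is a diagram all of whose edges lie in the image of $\fun$. In $\mathbf{Sets}$ the corresponding diagrams for $\alpha, \lambda, \rho, \beta$ commute, so applying the (contravariant) functor $\fun$ preserves the commutativity (contravariance reverses every arrow in a commutative diagram, but this does not affect commutativity of a polygon). Thus the pentagon for $a$, the triangle relating $l,r,a$, and both hexagons relating $b$ and $a$ all commute in $\bfun_S$. Symmetry $b_{B,A}\circ b_{A,B}=\id$ likewise follows from $\beta_{A,B}\circ\beta_{B,A}=\id_{A\times B}$ in $\mathbf{Sets}$ together with functoriality of $\fun$.

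The only mild subtlety — and the main point to keep straight — is that $\fun_S$ is contravariant, so that $\fun(\alpha_{A,B,C})$ goes from $\fun(A)\hotimes(\fun(B)\hotimes\fun(C))$ to $(\fun(A)\hotimes\fun(B))\hotimes\fun(C)$ rather than the reverse; the axiom diagrams of a symmetric monoidal category are invariant under reversing every arrow, so this presents no real obstacle, only the need to record the correct source and target. Once this is done, every required identity reduces to the corresponding identity in $\mathbf{Sets}$, and the proposition follows.
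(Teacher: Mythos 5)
Your proposal is correct and matches the paper's intent: the paper declares the verification ``a routine task'' precisely because all the structure maps $a$, $l$, $r$, $b$ are defined as images under the contravariant functor $\fun_S$ of the corresponding structure maps of $(\mathbf{Sets},\times,\{*\})$, so every coherence diagram is the $\fun_S$-image of one that commutes in $\mathbf{Sets}$. Your explicit attention to the arrow reversal caused by contravariance (and the observation that this does not affect commutativity) is exactly the one point worth recording.
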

Let $S$ be a complete semiring and let $A,B,C$ be sets.
We put $\fun (A)\hotimes \fun (B)\hotimes \fun (C) =
\fun (A\times B\times C)$ (triples), etc.
A \emph{contraction}
\[ \gamma: \fun (A)\hotimes \fun (B)\hotimes \fun (B)\hotimes \fun (C)
 \longrightarrow \fun (A)\hotimes \fun (C) \]
can then be defined, using the summation law in $S$, by
\[ \gamma (f)(a,c) = \sum_{b\in B} f(a,b,b,c), \]
$f: A\times B\times B\times C \to S,$ $(a,c)\in A\times C$.
Given $f\in \fun (A)\hotimes \fun (B)$ and $g\in \fun (B)\hotimes \fun (C)$,
we shall also write
$\langle f,g \rangle = \gamma (f\hotimes g).$
This contraction appears in describing the behavior of our state sum
invariant under gluing of bordisms. The proof of the following two statements is
straightforward.
\begin{prop} \label{prop.contractbilinear}
The contraction
\[ \langle -,- \rangle: (\fun (A)\hotimes \fun (B))\times
 (\fun (B)\hotimes \fun (C)) \longrightarrow
 \fun (A)\hotimes \fun (C) \]
is $S_S S$-linear.
\end{prop}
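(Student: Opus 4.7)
The plan is to unpack the contraction explicitly and then verify each clause of $S_S S$-linearity as defined in Section~\ref{sec.monssemirings} directly from the complete-semiring axioms. After identifying $\fun(A)\hotimes \fun(B) = \fun(A\times B)$ etc., the contraction reads
\[ \langle f,g\rangle (a,c) = \sum_{b\in B} f(a,b)\, g(b,c), \]
so every property is checked pointwise in $(a,c)\in A\times C$ and reduces to an assertion about sums indexed by $B$ in the complete semiring $S$.

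First I would verify biadditivity. For fixed $(a,c)$, pointwise addition in $\fun(A\times B)$ and distributivity in $S$ give $(f+f')(a,b)g(b,c) = f(a,b)g(b,c) + f'(a,b)g(b,c)$, and regrouping the $B$-indexed sum using the partition/Fubini identity (\ref{equ.sumijmonoid}) on $B\times \{1,2\}$ yields $\langle f+f',g\rangle = \langle f,g\rangle + \langle f',g\rangle$. The additivity in the second slot is identical. The equality $\langle 0,g\rangle = 0 = \langle f,0\rangle$ follows from the identity $\sum_{b\in B}0 = 0$, which holds in any complete semiring (and was observed in the text).

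Next I would check the three scalar compatibilities. Left $S$-linearity in the first argument is $\langle sf,g\rangle(a,c) = \sum_b s\cdot f(a,b)\cdot g(b,c) = s\sum_b f(a,b)g(b,c) = (s\langle f,g\rangle)(a,c)$, where the middle step uses the left infinite distributivity requirement of a complete semiring. Right $S$-linearity in the second argument is symmetric, invoking the right version $\sum_i s_i s = (\sum_i s_i)s$. Middle $S$-linearity $\langle fs,g\rangle = \langle f,sg\rangle$ is purely pointwise and finite: the bisemialgebra actions give $(fs)(a,b) = f(a,b)s$ and $(sg)(b,c) = s\,g(b,c)$, and the identity $f(a,b)\cdot s\cdot g(b,c) = f(a,b)\cdot (s\cdot g(b,c))$ under the sum follows from associativity of multiplication in $S$, term by term.

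There is no serious obstacle: the entire proposition is a bookkeeping translation of the complete-semiring axioms (infinite distributivity, the partition rule for $\sum$, associativity of the product, and the vanishing of a sum of zeros) into statements about the pointwise-defined contraction. The one point to articulate carefully is that passing from finite distributivity to the statement about infinite sums indexed by the possibly uncountable set $B$ is precisely what requires $S$ to be complete in Eilenberg's sense; once that is made explicit, each clause becomes a one-line verification.
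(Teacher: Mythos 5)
Your proof is correct and is exactly the direct verification the paper has in mind (the paper merely declares Propositions \ref{prop.contractbilinear} and \ref{prop.contractassoc} ``straightforward'' and omits the details): unpack $\langle f,g\rangle(a,c)=\sum_{b\in B}f(a,b)g(b,c)$ and check biadditivity via the partition axiom, the outer scalar compatibilities via infinite distributivity, and middle $S$-linearity via associativity of the product in $S$. You also correctly isolate the one substantive point, namely that completeness of $S$ is what licenses the manipulations of the possibly uncountable $B$-indexed sums.
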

\begin{prop} \label{prop.contractassoc}
The contraction $\langle -,- \rangle$ is associative, that is, given
four sets $A,B,C,D$ and elements $f\in \fun (A)\hotimes \fun (B),$
$g\in \fun (B)\hotimes \fun (C)$ and $h\in \fun (C)\hotimes \fun (D)$,
the equation
\[ \langle \langle f,g \rangle, h \rangle = \langle f, \langle g,h \rangle \rangle \]
holds in $\fun (A)\hotimes \fun (D)$.
\end{prop}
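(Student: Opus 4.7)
The plan is to unfold both sides as iterated infinite sums over $B$ and $C$ and then invoke the Fubini-type identity \eqref{equ.sumijmonoid} together with the infinite distributivity axioms for a complete semiring. First, I would fix $(a,d)\in A\times D$ and compute the left-hand side: by definition of $\gamma$ applied twice,
\[
\langle\langle f,g\rangle,h\rangle(a,d) \;=\; \sum_{c\in C}\langle f,g\rangle(a,c)\,h(c,d)\;=\;\sum_{c\in C}\Big(\sum_{b\in B} f(a,b)g(b,c)\Big) h(c,d).
\]
Using right infinite distributivity in $S$ to move $h(c,d)$ inside, this becomes $\sum_{c\in C}\sum_{b\in B} f(a,b)g(b,c)h(c,d)$. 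Similarly, the right-hand side evaluates to
\[
\langle f,\langle g,h\rangle\rangle(a,d)\;=\;\sum_{b\in B} f(a,b)\Big(\sum_{c\in C} g(b,c)h(c,d)\Big)\;=\;\sum_{b\in B}\sum_{c\in C} f(a,b)g(b,c)h(c,d),
\]
after applying left infinite distributivity to pull $f(a,b)$ inside the inner sum.

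Next, I would apply the Fubini identity \eqref{equ.sumijmonoid} to the family $\{m_{bc}\}_{(b,c)\in B\times C}$ defined by $m_{bc}=f(a,b)g(b,c)h(c,d)$; this yields
\[
\sum_{c\in C}\sum_{b\in B} m_{bc}\;=\;\sum_{(b,c)\in B\times C} m_{bc}\;=\;\sum_{b\in B}\sum_{c\in C} m_{bc},
\]
which matches the two computations above. Since this equality holds pointwise at every $(a,d)\in A\times D$, it gives the asserted identity in $\fun(A)\hotimes\fun(D)=\fun(A\times D)$.

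The only genuinely substantive ingredient is the availability of the infinite summation law and its interaction with multiplication; because $S$ is complete, both the use of infinite distributivity to absorb the outside factors into the inner sums and the Fubini swap are legitimate without any finiteness hypothesis on $B$ or $C$. I do not anticipate a real obstacle beyond the bookkeeping of keeping all four variables straight, and the proof is purely formal once the definitions are unwound.
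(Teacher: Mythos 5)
Your proof is correct and is precisely the ``straightforward'' argument the paper has in mind (the paper omits it, remarking only that the proof of Propositions \ref{prop.contractbilinear} and \ref{prop.contractassoc} is straightforward): unwind both sides pointwise at $(a,d)$, use the two infinite distributivity axioms to absorb the outer factors, and apply the partition/Fubini identity (\ref{equ.sumijmonoid}) over $B\times C$ to interchange the two sums. No gaps; the only implicit ingredients beyond what you name are the associativity of multiplication in $S$ and the identification $\fun(A)\hotimes\fun(C)=\fun(A\times C)$ that makes the outer contraction well-defined, both of which are immediate.
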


\section{Convolution Semirings Associated to Monoidal Categories}
\label{sec.semiringmoncat}

Let $S$ be a semiring and
let $\catc$ be a small category. 
The symbol $\Mor (\catc)$ will denote the set of all morphisms of $\catc$.
Given a morphism $\alpha \in \Mor (\catc),$ $\dom (\alpha)$ is its domain
and $\cod (\alpha)$ its codomain.
We shall show that if $S$ is complete, then this data determines a (complete)
semiring $Q = Q_S (\catc)$. Our construction is motivated by similar ones in 
harmonic analysis. Suppose that $\catc$ is a group, i.e. a groupoid with one
object $*$. If the group $G = \Hom_\catc (*,*)$ is locally compact Hausdorff,
then one may consider $L^1 (G),$ the functions on $G$ that are integrable with
respect to Haar measure. A multiplication, called the convolution product, on 
$L^1 (G)$ is given by
\[ (f*g)(t)=\int_G f(s)g(s^{-1} t)ds. \]
The resulting algebra is called the group algebra or convolution algebra.
If the functions on $G$ take values in a complete semiring $S$, one can
drop the integrability assumption (and in fact the assumption that $G$ be
topological) and use the summation law in $S$ instead of integration against
Haar measure:
\[ (f*g)(t) = \sum_{ss' =t} f(s)g(s'). \]
Written like this, it now becomes clear that even the assumption that $\catc$
be a group is obsolete. There are also close connections of our construction
of $Q_S (\catc)$ to the categorical algebra $R[\catc]$ associated to a 
locally finite category $\catc$ and a ring $R$. A category is locally finite,
if every morphism can be factored in only finitely many ways as a product
of nonidentity morphisms. Elements of $R[\catc]$
are functions $\Mor (\catc)\to R$, i.e. $R[\catc] = \fun_R (\Mor (\catc))$.
The local finiteness of $\catc$ ensures that the convolution product in $R$
is well-defined. The difference to our construction is that we do not need
$\catc$ to be locally finite, but we need $R=S$ to be a complete semiring.\\

Let $S$ be a semiring and $\catc$ a small category.
For every pair $(X,Y)$ of objects
in $\catc$, we then have the commutative monoid
$(\fun_S (\Hom_\catc (X,Y)),+,0)$ and can form the product
\[ Q_S (\catc) = \prod_{X,Y \in \Ob \catc} \fun_S (\Hom_\catc (X,Y)). \]
Elements $f\in Q_S (\catc)$ are families $f=(f_{XY})$ of functions
$f_{XY}: \Hom_\catc (X,Y)\to S$. Such a family is of course the same thing
as a function $f: \Mor (\catc)\to S$ and $Q_S (\catc) = \fun_S (\Mor (\catc)),$
but we find it convenient to keep the bigrading of $Q_S (\catc)$ by pairs of objects.
The addition on $Q_S (\catc)$ preserves the bigrading, i.e.
$(f+ g)_{XY} = f_{XY} + g_{XY}$.
The neutral element $0\in Q_S (\catc)$ is given by
$0 = (0_{XY})$ with $0_{XY} =0: \Hom_\catc (X,Y)\to S$ the
constant map sending every morphism $X\to Y$ to $0\in S$.
Then $(Q_S (\catc),+,0)$ is a commutative monoid.
\begin{prop} \label{prop.qsccompletemonoid}
If $S$ is a complete semiring, then $Q_S (\catc)$ inherits a
summation law from $S,$ making $(Q_S (\catc), +,0)$ into
a complete monoid.
\end{prop}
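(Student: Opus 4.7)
The plan is to transfer the summation law from $S$ to $Q_S(\catc)$ pointwise, essentially reusing the argument already recorded for $\fun_S(A)$ in Proposition \ref{prop.funsemimod}, and then to verify the four axioms of a complete monoid by checking them ``pair of objects by pair of objects, morphism by morphism.''

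Concretely, given an arbitrary family $\{ f^{(i)} \}_{i \in I}$ of elements $f^{(i)} = (f^{(i)}_{XY}) \in Q_S(\catc)$ indexed by any set $I$, I would define a candidate sum $\sum_{i \in I} f^{(i)} \in Q_S(\catc)$ by setting, for every pair $(X,Y) \in \Ob \catc \times \Ob \catc$ and every morphism $\alpha \in \Hom_\catc(X,Y)$,
\[ \Big( \sum_{i \in I} f^{(i)} \Big)_{XY}(\alpha) = \sum_{i \in I} f^{(i)}_{XY}(\alpha), \]
where the right-hand side uses the given summation law of the complete semiring $S$. Since $S$ is complete, this right-hand side is a well-defined element of $S$, so the assignment $\alpha \mapsto \sum_i f^{(i)}_{XY}(\alpha)$ is a well-defined element of $\fun_S(\Hom_\catc(X,Y))$, and the whole family lies in $Q_S(\catc)$.

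The four Eilenberg axioms for a complete monoid now reduce to the corresponding axioms for $S$: the empty sum evaluated at any $\alpha$ is $0 \in S$, hence yields the zero family $0 \in Q_S(\catc)$; singleton and two-element sums reproduce $f^{(1)}$ respectively $f^{(1)}+f^{(2)}$ because they do so pointwise in each $S$-valued component, using the pointwise definition of $+$ on $Q_S(\catc)$; and for any partition $I = \bigcup_{j \in J} I_j$, evaluating both sides of $\sum_{j \in J}(\sum_{i \in I_j} f^{(i)}) = \sum_{i \in I} f^{(i)}$ at an arbitrary $\alpha \in \Hom_\catc(X,Y)$ produces exactly the partition axiom for the family $\{ f^{(i)}_{XY}(\alpha) \}_{i \in I}$ in $S$.

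I do not foresee any real obstacle: the construction is entirely ``coordinatewise'' and every property follows from the matching property in $S$. The only thing worth flagging is the notational bookkeeping: one must keep the bigrading $(X,Y)$ and the evaluation at $\alpha$ clearly separated so that the two pointwise reductions (first to the $(X,Y)$-component in $\fun_S(\Hom_\catc(X,Y))$, then to the value at $\alpha$ in $S$) are applied in the correct order. Alternatively, one can shortcut the whole verification by identifying $Q_S(\catc)$ with $\fun_S(\Mor(\catc))$ and invoking Proposition \ref{prop.funsemimod}, but writing out the pointwise argument has the virtue of making the bigrading structure manifest for later use.
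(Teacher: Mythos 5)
Your proposal is correct and follows essentially the same route as the paper: the summation law is declared componentwise by $(\sum_{i\in I} f^{(i)})_{XY}(\alpha) = \sum_{i\in I} f^{(i)}_{XY}(\alpha)$, and each Eilenberg axiom (in particular the partition axiom) is verified by evaluating at an arbitrary morphism and invoking the corresponding axiom in the complete semiring $S$. The shortcut you mention via $Q_S(\catc)=\fun_S(\Mor(\catc))$ and Proposition \ref{prop.funsemimod} is likewise consistent with the paper's own remarks.
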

\begin{proof}
Given an index set $J$ and a family $\{ f_j \}_{j\in J}$ of
elements $f_j \in Q_S (\catc),$ declare a summation law
on $Q_S (\catc)$ by
\[ (\sum_{j\in J} f_j)_{XY} (\gamma) = \sum_{j\in J}
 ((f_j)_{XY} (\gamma)) \in S, \]
using on the right hand side the summation law of $S$.
In this formula,
$\gamma: X\to Y$ is a morphism in $\catc$.
Using this summation law, all the axioms for a complete monoid
are satisfied. For example, given a partition
$J = \bigcup_{k\in K} J_k,$ one has
\begin{eqnarray*}
(\sum_{k\in K} (\sum_{j\in J_k} f_j))_{XY} (\gamma)
&=& \sum_{k\in K} ((\sum_{j\in J_k} f_j)_{XY} (\gamma))
= \sum_{k\in K} (\sum_{j\in J_k} ((f_j)_{XY} (\gamma))) \\
& = &
\sum_{j\in J} ((f_j)_{XY} (\gamma)) =
(\sum_{j\in J} f_j)_{XY} (\gamma), 
\end{eqnarray*}
using the partition axiom provided by the completeness of $S$.
\end{proof}
Assume that $S$ is complete. Then we can define a (generally noncommutative)
multiplication $\cdot: Q_S (\catc)\times Q_S (\catc) \to
Q_S (\catc)$ by $(f_{XY})\cdot (g_{XY})= (h_{XY}),$ with
$h_{XY}: \Hom_\catc (X,Y)\to S$ defined on a morphism
$\gamma: X\to Y$ by the convolution formula
\[ h_{XY} (\gamma)= \sum_{\beta \alpha = \gamma}
  g_{ZY} (\beta)\cdot f_{XZ} (\alpha), \]
where $\alpha, \beta$ range over all $\alpha \in \Hom_\catc (X,Z),$
$\beta \in \Hom_\catc (Z,Y)$ with $\gamma = \beta \circ \alpha$.
The right hand side of this formula uses the multiplication of the
semiring $S$. Note that the sum may well be infinite, but nevertheless
yields a well-defined element of $S$ by completeness.
An element $1\in Q_S (\catc)$ is given by $(f_{XY})$ with
\[ f_{XY} = \begin{cases} f_{XX},& X=Y \\ 0 & X\not= Y \end{cases},~
  f_{XX}(\alpha)=\begin{cases} 1,& \alpha =\id_X \\ 0,& \alpha \not= \id_X
  \end{cases}. \]

\begin{prop}
The quintuple $(Q_S (\catc), +, \cdot, 0, 1)$ is a complete semiring.
\end{prop}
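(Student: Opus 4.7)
The plan is to verify, in order, the remaining semiring axioms: associativity of $\cdot$, the unit laws for $1$, the left and right distributive laws, the absorption property of $0$, and finally the two infinite distributivity requirements that promote $Q_S(\catc)$ from a mere semiring to a complete one. Completeness of the underlying additive monoid $(Q_S(\catc),+,0,\sum)$ is already available from Proposition \ref{prop.qsccompletemonoid}, so only compatibility of the new multiplication with $\sum$ needs to be added to that.

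For associativity, I would fix $f,g,h \in Q_S(\catc)$ and a morphism $\delta: X\to Y$ in $\catc$, and expand both $((f\cdot g)\cdot h)_{XY}(\delta)$ and $(f\cdot (g\cdot h))_{XY}(\delta)$ by a double application of the convolution formula. Using the infinite distributivity of $S$ to pull the outer factor into the inner sum, both expressions become iterated $S$-sums over factorizations of $\delta$ into three composable morphisms $\alpha:X\to Z,\ \beta:Z\to W,\ \sigma:W\to Y$ with $\sigma\beta\alpha=\delta$. The Fubini-type identity (\ref{equ.sumijmonoid}) in the complete monoid $S$, combined with associativity of composition in $\catc$ and associativity of multiplication in $S$, then identifies the two iterated sums. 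The unit laws $1\cdot f = f = f\cdot 1$ follow directly from the definition of $1$: in the convolution sum defining $1\cdot f$ (resp. $f\cdot 1$), only the summand with $\alpha=\id_X$ (resp. $\beta=\id_Y$) is nonzero, leaving $f_{XY}(\delta)$.

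The distributivity of $\cdot$ over $+$ and absorption of $0$ are componentwise consequences of the corresponding semiring properties in $S$, combined with the (finite) distributivity of $S$ across each convolution sum. For the infinite distributivity, I would show
\[ f\cdot \Bigl( \sum_{i\in I} g_i \Bigr) = \sum_{i\in I} (f\cdot g_i), \]
and the symmetric identity, by evaluating both sides at $\delta:X\to Y$, inserting the pointwise definition of $\sum_i g_i$ from Proposition \ref{prop.qsccompletemonoid}, and then swapping the outer sum over $I$ with the inner sum over factorizations $\beta\alpha=\delta$ by means of (\ref{equ.sumijmonoid}), followed by one application of infinite distributivity in $S$.

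The only step that is more than mechanical is the bookkeeping in the associativity calculation: one must check that the iterated index sets really are in canonical bijection with the set of triples $(\sigma,\beta,\alpha)$ of composable morphisms with $\sigma\beta\alpha=\delta$. This is the main (though still routine) obstacle; once it is made explicit, a single invocation of the Fubini identity (\ref{equ.sumijmonoid}) in $S$ finishes the proof.
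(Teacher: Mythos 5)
Your proposal is correct and follows essentially the same route as the paper: both verify associativity by reducing each side of $(f\cdot g)\cdot h = f\cdot(g\cdot h)$ to a single sum over triples of composable factorizations of $\gamma$ (the paper's sets $L=R$), handle the unit laws by isolating the identity-morphism summand, and prove infinite distributivity by the two-partition/Fubini argument on $J\times\Gamma$. No gaps.
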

\begin{proof}
 Let us
verify that $(Q_S (\catc),\cdot, 1)$ is a monoid.
Given $(f_{XY}),$ $(g_{XY})$ and $(h_{XY}) \in Q_S (\catc),$
let $(l_{XY}) = (f_{XY})\cdot (g_{XY})$ and 
$(r_{XY}) = (g_{XY})\cdot (h_{XY})$. Let $\gamma: X\to Y$
be a morphism in $\catc$. Then
\begin{eqnarray*}
((l_{XY})\cdot (h_{XY}))_{XY} (\gamma) & = &
  \sum_{\beta \alpha = \gamma} h_{ZY} (\beta)\cdot l_{XZ} (\alpha) 
 =   \sum_{\beta \alpha = \gamma} h_{ZY} (\beta)\cdot 
   \sum_{\sigma \tau =\alpha} g_{UZ} (\sigma)\cdot f_{XU} (\tau) \\
& = &  \sum_{\beta \sigma \tau = \gamma} h_{ZY} (\beta)\cdot 
    g_{UZ} (\sigma)\cdot f_{XU} (\tau) 
 = \sum_{\zeta\in L}  s(\zeta),
\end{eqnarray*}
where
$L = \{ \zeta = (\beta, \sigma, \tau) ~|~ \beta \sigma \tau = \gamma \},$
involving all possible factorizations of $\gamma$ into three factors
\[ X \stackrel{\tau}{\longrightarrow} U \stackrel{\sigma}{\longrightarrow}
 Z \stackrel{\beta}{\longrightarrow} Y, \]
and the function $s$ is given by
$s(\beta, \sigma, \tau) =  h_{ZY} (\beta)\cdot 
    g_{UZ} (\sigma)\cdot f_{XU} (\tau).$
On the other hand,
\begin{eqnarray*}
((f_{XY})\cdot (r_{XY}))_{XY} (\gamma) & = &
  \sum_{\beta \alpha = \gamma} r_{ZY} (\beta)\cdot f_{XZ} (\alpha) 
=   \sum_{\beta \alpha = \gamma} 
  (\sum_{\sigma \tau = \beta} h_{VY} (\sigma)\cdot 
   g_{ZV} (\tau)) \cdot f_{XZ} (\alpha) \\
& = &  \sum_{\sigma \tau \alpha = \gamma} h_{VY} (\sigma)\cdot 
    g_{ZV} (\tau)\cdot f_{XZ} (\alpha) 
 =  \sum_{\zeta\in R}  s(\zeta),
\end{eqnarray*}
where
$R = \{ \zeta = (\sigma, \tau, \alpha) ~|~ \sigma \tau \alpha = \gamma \},$
involving all possible factorizations of $\gamma$ into three factors
\[ X \stackrel{\alpha}{\longrightarrow} Z \stackrel{\tau}{\longrightarrow}
 V \stackrel{\sigma}{\longrightarrow} Y, \]
and the function $s$ is the same as above. As $L=R$, this shows that
the multiplication $\cdot$ on $Q_S (\catc)$ is associative.
The element $1\in Q_S (\catc)$ is neutral with respect to this
multiplication, for
\begin{eqnarray*}
((f_{XY})\cdot 1)_{XY} (\gamma) & = &
  \sum_{\beta \alpha = \gamma} 1_{ZY} (\beta)\cdot f_{XZ} (\alpha) \\
 & = & \sum_{\beta \alpha = \gamma,~ Z\not= Y} 1_{ZY} (\beta)\cdot f_{XZ} (\alpha) +
  \sum_{\beta \alpha = \gamma,~ Z=Y} 1_{YY} (\beta)\cdot f_{XY} (\alpha) \\
& = & \sum_{\beta \alpha = \gamma} 1_{YY} (\beta)\cdot f_{XY} (\alpha) \\
 & = & \sum_{\beta \alpha = \gamma,~ \beta \not= \id_Y} 1_{YY} (\beta)\cdot f_{XY} (\alpha) +
  \sum_{\beta \alpha = \gamma,~ \beta=\id_Y} 1_{YY} (\beta)\cdot f_{XY} (\alpha) \\
& = &  \sum_{\alpha = \gamma} 1_{YY} (\id_Y)\cdot f_{XY} (\gamma) \\
& = & f_{XY} (\gamma)
\end{eqnarray*}
and similarly $1\cdot (f_{XY}) = (f_{XY}).$ Therefore, $(Q_S (\catc),\cdot,1)$ is a
monoid. The distribution laws are readily verified and
the element $0\in Q_S (\catc)$ is obviously absorbing.

By Proposition \ref{prop.qsccompletemonoid}, $(Q_S (\catc),+,0)$ is a complete
monoid. It remains to be shown that the summation law satisfies
the infinite distributivity requirement with respect to $\cdot$ on $Q_S (\catc)$.
As above, $\gamma: X\to Y$ is a morphism in $\catc$.
Given an element $g\in Q_S (\catc),$ an index set $J$ and a family $\{ f_j \}_{j\in J}$ of
elements $f_j \in Q_S (\catc),$ let $\Gamma = \{ (\beta, \alpha) ~|~ \beta \alpha =\gamma \}$ 
and let $P$ be the cartesian product $P = J\times \Gamma$.
Note that $P$ comes with two natural partitions, namely into sets
$\{ j \} \times \Gamma,$ $j\in J,$ and into sets $J\times \{ (\beta, \alpha) \}$,
$(\beta,\alpha)\in \Gamma$. Using the infinite distribution axiom for $S$ and
the partition axiom, we have
\begin{eqnarray*}
(g\cdot \sum_{j\in J} f_j)_{XY} (\gamma) & = &
  \sum_{(\beta, \alpha)\in \Gamma} (\sum_{j\in J} f_j)_{ZY} (\beta)\cdot
   g_{XZ} (\alpha) 
 =  \sum_{(\beta, \alpha)\in \Gamma} (\sum_{j\in J}(( f_j)_{ZY} (\beta)))\cdot
   g_{XZ} (\alpha) \\
& = & \sum_{(j,(\beta,\alpha))\in P} (f_j)_{ZY} (\beta)\cdot g_{XZ} (\alpha) 
 =  \sum_{j\in J} (\sum_{(\beta,\alpha)\in \Gamma} (f_j)_{ZY} (\beta)\cdot
    g_{XZ} (\alpha)) \\
& = & \sum_{j\in J} ((g\cdot f_j)_{XY} (\gamma)) 
 =  (\sum_{j\in J} (g\cdot f_j))_{XY} (\gamma).
\end{eqnarray*}
Similarly $(\sum f_j)\cdot g = \sum (f_j \cdot g).$
\end{proof}  
The above proof shows that the (strict) associativity of the composition law $\circ$
of $\catc$ implies the associativity of the multiplication $\cdot$ in $Q_S (\catc)$.
Similarly, the presence of identity morphisms in $\catc$ implies the existence of
a unit element $1$ for the multiplication. It is clear that the multiplication $\cdot$
on $Q_S (\catc)$ is generally noncommutative, even if $S$ happens to be
commutative. For example, let $\catc$ be the category given by two distinct objects
$X,Y$ and morphisms
\[ \Hom_\catc (X,X)=\{ \id_X \},~
   \Hom_\catc (Y,Y)=\{ \id_Y \},~
   \Hom_\catc (X,Y)=\{ \gamma \},~
   \Hom_\catc (Y,X)= \varnothing. \]
The composition law is uniquely determined. Let $S=\bool$ be the Boolean
semiring, which is commutative. If
$f(\gamma)=1,$ $g(\id_X)=0,$ $g(\gamma)=0,$ $g(\id_Y)=1,$
then
\[ (f\cdot g)(\gamma) = g(\gamma)f(\id_X) + g(\id_Y)f(\gamma)=1, \]
but
\[ (g\cdot f)(\gamma)=f(\gamma)g(\id_X) + f(\id_Y)g(\gamma)=0. \]

Now suppose that $(\catc, \otimes, I)$ is a strict monoidal category.
Then, using the monoidal structure, we can define a different
multiplication $\times: Q_S (\catc)\times Q_S (\catc) \to
Q_S (\catc)$ by $(f_{XY})\times (g_{XY})= (h_{XY}),$ with
$h_{XY}: \Hom_\catc (X,Y)\to S$ defined on a morphism
$\gamma: X\to Y$ as the convolution
\[ h_{XY} (\gamma)= \sum_{\alpha \otimes \beta = \gamma}
  g_{X'' Y''} (\beta)\cdot f_{X' Y'} (\alpha), \]
where $\alpha, \beta$ range over all $\alpha \in \Hom_\catc (X',Y'),$
$\beta \in \Hom_\catc (X'',Y'')$ such that
$X = X' \otimes X'',$ $Y= Y' \otimes Y''$ and
 $\gamma = \alpha \otimes \beta$.
Again, the right hand side of this formula uses the multiplication of the
complete ground semiring $S$. An element $1^\times \in Q_S (\catc)$ is given by $(f_{XY})$ with
\[ f_{XY} = \begin{cases} f_{II},& X=Y=I \text{ (unit obj.)} \\ 0 & \text{otherwise} \end{cases},~
  f_{II}(\alpha)=\begin{cases} 1,& \alpha =\id_I \\ 0,& \alpha \not= \id_I
  \end{cases}. \]

\begin{prop}
The quintuple $(Q_S (\catc), +, \times, 0, 1^\times)$ is a complete semiring.
\end{prop}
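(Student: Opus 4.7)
The proof will closely parallel the argument already given for $(Q_S(\catc), +, \cdot, 0, 1)$, with the composition law $\circ$ replaced by the tensor functor $\otimes$. Since $(Q_S(\catc), +, 0)$ is already known to be a complete monoid by Proposition \ref{prop.qsccompletemonoid}, and absorption by $0$ is immediate from the pointwise definition, the work reduces to (i) associativity of $\times$, (ii) the unit law for $1^\times$, (iii) the distributive laws, and (iv) the infinite distributivity with respect to arbitrary sums.

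For associativity, I would compute both $((f\times g)\times h)_{XY}(\gamma)$ and $(f\times (g\times h))_{XY}(\gamma)$ for a morphism $\gamma:X\to Y$. Expanding one nested convolution gives a sum over pairs of factorizations, which by completeness and infinite distributivity of $S$ collapses to a single sum indexed by triples $(\alpha,\beta,\delta)$ with $\gamma = \alpha\otimes\beta\otimes\delta$, $X=X'\otimes X''\otimes X'''$, $Y=Y'\otimes Y''\otimes Y'''$, weighted by $h(\delta)\cdot g(\beta)\cdot f(\alpha)$. Expanding the other nested convolution produces the same indexing set and weight function; here the key input is the \emph{strict} associativity of $\otimes$ on both objects and morphisms, so that the two a priori different sets of triple factorizations literally coincide (in contrast to merely being in canonical bijection, as they would be in a non-strict monoidal category).

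For the unit law, I would use strictness of the unit axioms $I\otimes X=X=X\otimes I$ and $\id_I \otimes \alpha = \alpha = \alpha\otimes \id_I$. In evaluating $(f\times 1^\times)_{XY}(\gamma) = \sum_{\alpha\otimes\beta=\gamma} 1^\times_{X''Y''}(\beta) f_{X'Y'}(\alpha)$, the factor $1^\times_{X''Y''}(\beta)$ vanishes unless $X''=Y''=I$ and $\beta=\id_I$; but in a strict monoidal category the only such factorization is $\gamma=\gamma\otimes\id_I$, forcing $X'=X$, $Y'=Y$, $\alpha=\gamma$, and yielding $f_{XY}(\gamma)\cdot 1 = f_{XY}(\gamma)$. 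The argument for $1^\times \times f = f$ is symmetric.

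The distributive laws $(f+g)\times h = f\times h + g\times h$ and its mirror are immediate from the pointwise definition of $+$ combined with the distributivity already present in $S$. For infinite distributivity, I would repeat verbatim the partition argument used for $\cdot$: set $\Gamma^\otimes = \{(\alpha,\beta)\mid \alpha\otimes\beta=\gamma\}$, consider the two natural partitions of $J\times\Gamma^\otimes$, and invoke the infinite distributivity of $S$ together with the partition axiom for the summation law on $S$ to interchange the summations. I expect no serious obstacle; the only point requiring care is the strict equality of factorization sets in the associativity step, which is precisely why the hypothesis that $\catc$ be a \emph{strict} monoidal category enters essentially.
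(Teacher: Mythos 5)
Your proposal is correct and follows essentially the same route as the paper: associativity via collapsing both nested convolutions to a single sum over triple tensor factorizations (with strictness guaranteeing the two indexing sets coincide on the nose), the unit law via the strict identities $X'\otimes I = X'$ and $\alpha\otimes\id_I=\alpha$, and infinite distributivity via the two natural partitions of $J\times\Gamma$ together with the partition axiom and infinite distributivity of $S$. Nothing to add.
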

\begin{proof}
We check that $(Q_S (\catc),\cdot, 1^\times)$ is a monoid, knowing already
that $(Q_S (\catc),+,0)$ is a commutative monoid.
Given $(f_{XY}),$ $(g_{XY})$ and $(h_{XY}) \in Q_S (\catc),$
let $(l_{XY}) = (f_{XY})\times (g_{XY})$ and 
$(r_{XY}) = (g_{XY})\times (h_{XY})$. Let $\gamma: X\to Y$
be a morphism in $\catc$. Then
\begin{eqnarray*}
((l_{XY})\times (h_{XY}))_{XY} (\gamma) & = &
  \sum_{\alpha \otimes \beta = \gamma} h_{X'' Y''} (\beta)\cdot l_{X' Y'} (\alpha) \\
& = &  \sum_{\alpha \otimes \beta = \gamma} h_{X'' Y''} (\beta)\cdot 
   \sum_{\sigma \otimes \tau =\alpha} g_{X_2 Y_2} (\tau)\cdot f_{X_1 Y_1} (\sigma) \\
& = &  \sum_{\sigma \otimes \tau \otimes \beta = \gamma} h_{X'' Y''} (\beta)\cdot 
    g_{X_2 Y_2} (\tau)\cdot f_{X_1 Y_1} (\sigma) \\
& = & \sum_{\zeta\in L}  s(\zeta),
\end{eqnarray*}
where
$L = \{ \zeta = (\sigma, \tau, \beta) ~|~ \sigma \otimes \tau \otimes \beta = \gamma \},$
involving all possible factorizations of $\gamma$ into three tensor factors
\[ X = X_1 \otimes X_2 \otimes X''
\stackrel{\sigma \otimes \tau \otimes \beta}{\longrightarrow} 
Y_1 \otimes Y_2 \otimes Y'' = Y, \]
and the function $s$ is given by
$s(\sigma, \tau, \beta) =  h_{X'' Y''} (\beta)\cdot 
    g_{X_2 Y_2} (\tau)\cdot f_{X_1 Y_1} (\sigma).$
(Note that since $(\catc, \otimes, I)$ is strict, we do not have to indicate
parentheses. As is customary in strict monoidal categories, we write
$X_1 \otimes X_2 \otimes X''$ for 
$(X_1 \otimes X_2) \otimes X'' = X_1 \otimes (X_2 \otimes X'')$. Similarly for morphisms.)
On the other hand,
\begin{eqnarray*}
((f_{XY})\times (r_{XY}))_{XY} (\gamma) & = &
  \sum_{\alpha \otimes \beta = \gamma} r_{X'' Y''} (\beta)\cdot f_{X' Y'} (\alpha) \\
& = &  \sum_{\alpha \otimes \beta = \gamma} 
  (\sum_{\sigma \otimes \tau = \beta} h_{X_2 Y_2} (\tau)\cdot 
   g_{X_1 Y_1} (\sigma)) \cdot f_{X' Y'} (\alpha) \\
& = &  \sum_{\alpha \otimes \sigma \otimes \tau = \gamma} h_{X_2 Y_2} (\tau)\cdot 
    g_{X_1 Y_1} (\sigma)\cdot f_{X' Y'} (\alpha) \\
& = & \sum_{\zeta\in R}  s(\zeta),
\end{eqnarray*}
where
$R = \{ \zeta = (\alpha, \sigma, \tau) ~|~ \alpha \otimes \sigma \otimes \tau = \gamma \},$
involving all possible factorizations of $\gamma$ into three tensor factors
\[ X = X' \otimes X_1 \otimes X_2 
 \stackrel{\alpha \otimes \sigma \otimes \tau}{\longrightarrow}
 Y' \otimes Y_1 \otimes Y_2 =Y, \]
and the function $s$ is the same as above. As $\catc$ is strict, we have
$L=R$, which shows that
the multiplication $\times$ on $Q_S (\catc)$ is associative.
The element $1^\times \in Q_S (\catc)$ is neutral with respect to this
multiplication, for
\begin{eqnarray*}
((f_{XY})\times 1^\times)_{XY} (\gamma) & = &
  \sum_{\alpha \otimes \beta = \gamma} 1^\times_{X'' Y''} (\beta)\cdot f_{X' Y'} (\alpha) \\
 & = & \sum_{\alpha \otimes \beta = \gamma,~ X''\not= I \text{ or } Y''\not= I} 
       1^\times_{X'' Y''} (\beta)\cdot f_{X' Y'} (\alpha) \\
& & \hspace{1cm}+
  \sum_{\alpha \otimes \beta= \gamma,~ X''=Y''=I} 1^\times_{X'' Y''} (\beta)\cdot f_{X' Y'} (\alpha) \\
& = & \sum_{\alpha \otimes \beta= \gamma} 1^\times_{II} (\beta)\cdot f_{X' Y'} (\alpha) \\
 & = & \sum_{\alpha \otimes \beta= \gamma,~ \beta \not= \id_I} 1^\times_{II} (\beta)\cdot f_{X' Y'} (\alpha) +
  \sum_{\alpha \otimes \beta= \gamma,~ \beta=\id_I} 1^\times_{II} (\beta)\cdot f_{X' Y'} (\alpha) \\
& = &  \sum_{\alpha \otimes \id_I = \gamma} f_{XY} (\gamma) \\
& = & f_{XY} (\gamma)
\end{eqnarray*}
and similarly $1^\times \times (f_{XY}) = (f_{XY}).$ 
(In this calculation, we have used $X' \otimes I = X',$ $\alpha \otimes \id_I = \alpha,$ valid in a
strict monoidal category such as $\catc$.)
Therefore, $(Q_S (\catc),\cdot,1^\times)$ is a
monoid. The distribution laws are satisfied and
the element $0\in Q_S (\catc)$ is absorbing.

By Proposition \ref{prop.qsccompletemonoid}, $(Q_S (\catc),+,0)$ is a complete
monoid. It remains to be shown that the summation law satisfies
the infinite distributivity requirement with respect to $\times$ on $Q_S (\catc)$.
As above $\gamma: X\to Y$ is a morphism in $\catc$.
Given an element $g\in Q_S (\catc),$ an index set $J$ and a family $\{ f_j \}_{j\in J}$ of
elements $f_j \in Q_S (\catc),$ let $\Gamma = \{ (\alpha, \beta) ~|~ \alpha \otimes \beta =\gamma \}$ 
and let $P$ be the cartesian product $P = J\times \Gamma$.
Note that $P$ comes with two natural partitions, namely into sets
$\{ j \} \times \Gamma,$ $j\in J,$ and into sets $J\times \{ (\alpha,\beta) \}$,
$(\alpha,\beta)\in \Gamma$. Using the infinite distribution axiom for $S$ and
the partition axiom, we have
\begin{eqnarray*}
(g\times \sum_{j\in J} f_j)_{XY} (\gamma) & = &
  \sum_{(\alpha, \beta)\in \Gamma} (\sum_{j\in J} f_j)_{X'' Y''} (\beta)\cdot
   g_{X' Y'} (\alpha) \\
& = & \sum_{(\alpha, \beta)\in \Gamma} (\sum_{j\in J}(( f_j)_{X'' Y''} (\beta)))\cdot
   g_{X' Y'} (\alpha) \\
& = & \sum_{(j,(\alpha,\beta))\in P} (f_j)_{X'' Y''} (\beta)\cdot g_{X' Y'} (\alpha) \\
& = & \sum_{j\in J} (\sum_{(\alpha,\beta)\in \Gamma} (f_j)_{X'' Y''} (\beta)\cdot
    g_{X' Y'} (\alpha)) \\
& = & \sum_{j\in J} ((g\times f_j)_{XY} (\gamma)) \\
& = & (\sum_{j\in J} (g\times f_j))_{XY} (\gamma).
\end{eqnarray*}
\end{proof}  
It is crucial in the above proof to know that $\catc$ is strict. In order for the 
multiplication $\times$ to be associative, one must know that the sets of
factorizations $L_\gamma = \{ (\sigma, \tau, \beta) ~|~ 
(\sigma \otimes \tau)\otimes \beta = \gamma \}$ and 
$R_\gamma = \{ (\sigma, \tau, \beta) ~|~ 
\sigma \otimes (\tau \otimes \beta) = \gamma \}$
are equal. This holds when $\catc$ is strict, but may fail when $\catc$ is not strict.
Similarly, we used the property $\alpha \otimes \id_I = \alpha$, which holds in
a strict category but may fail to do so in a nonstrict one, to prove that $1^\times$
is a unit element for the multiplication $\times$. \\

We shall refer to the semiring  $Q^c =(Q_S (\catc), +, \cdot, 0, 1)$ as the
\emph{composition semiring} of $\catc$ (with ground semiring $S$), and to 
 $Q^m =(Q_S (\catc), +, \times, 0, 1^\times)$ as the
\emph{monoidal semiring} of $\catc$. \\

Given morphisms
\[ X' \stackrel{\xi'}{\longrightarrow} Z' \stackrel{\eta'}{\longrightarrow} Y',~
  X'' \stackrel{\xi''}{\longrightarrow} Z'' \stackrel{\eta''}{\longrightarrow} Y'', \]
the identity
\begin{equation} \label{equ.ctcistct}
(\eta' \circ \xi')\otimes (\eta'' \circ \xi'') = 
 (\eta' \otimes \eta'')\circ (\xi' \otimes \xi'')
\end{equation}
holds. This shows that the 
\emph{composition-tensor-composition (CTC) set} of a morphism
$\gamma: X\to Y$ in $\catc$,
\[ CTC(\gamma) = \{ (\xi', \xi'', \eta', \eta'') \in \Mor (\catc)^4 ~|~
  (\eta' \circ \xi')\otimes (\eta'' \circ \xi'') = \gamma \} \]
is a subset of the \emph{tensor-composition-tensor (TCT) set} of $\gamma$,
\[ TCT (\gamma) = \{ (\xi', \xi'', \eta', \eta'') \in \Mor (\catc)^4 ~|~
   (\eta' \otimes \eta'')\circ (\xi' \otimes \xi'') = \gamma \}, \]
since the equation $(\eta' \circ \xi')\otimes (\eta'' \circ \xi'') = \gamma$
implies that $\cod \xi' = \dom \eta'$ and $\cod \xi'' = \dom \eta''$,
so that (\ref{equ.ctcistct}) is applicable. However, knowing only
$(\eta' \otimes \eta'')\circ (\xi' \otimes \xi'') = \gamma$, one can infer
$\cod (\xi' \otimes \xi'') = \dom (\eta' \otimes \eta''),$ but \emph{not}
the individual statements $\cod \xi' = \dom \eta'$ and $\cod \xi'' = \dom \eta''$.
Thus (\ref{equ.ctcistct}) is not necessarily applicable and $TCT (\gamma)$ is
in general strictly larger than $CTC (\gamma)$.
\begin{prop} \label{prop.tctctconeobject}
Let $\catc$ be a strict monoidal category. Then
$TCT(\gamma)=CTC(\gamma)$ for all morphisms $\gamma$ in $\catc$
if and only if $\catc$ is a monoid, i.e. has only one object.
\end{prop}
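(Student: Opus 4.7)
The plan is to prove the two directions separately. The forward direction (one object implies $TCT = CTC$) will be a direct application of the interchange identity (\ref{equ.ctcistct}); the converse will be established by producing an explicit $\gamma$ and an explicit witness in $TCT(\gamma)\setminus CTC(\gamma)$ built from the monoidal unit $I$.

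For the forward direction I would begin by recalling from the paragraph preceding the proposition that $CTC(\gamma)\subseteq TCT(\gamma)$ holds in any strict monoidal category. For the reverse inclusion, assume $\catc$ has a single object $*$. Then every morphism has both source and target equal to $*$, so for any 4-tuple $(\xi',\xi'',\eta',\eta'')\in\Mor(\catc)^4$ the equalities $\cod\xi' = \dom\eta'$ and $\cod\xi'' = \dom\eta''$ hold automatically. Consequently (\ref{equ.ctcistct}) is applicable to every such tuple, turning the defining equation $(\eta'\otimes\eta'')\circ(\xi'\otimes\xi'') = \gamma$ of $TCT(\gamma)$ into the defining equation $(\eta'\circ\xi')\otimes(\eta''\circ\xi'') = \gamma$ of $CTC(\gamma)$, giving $TCT(\gamma)\subseteq CTC(\gamma)$.

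For the converse I would argue the contrapositive. Assuming $\catc$ has more than one object, pick any object $X$ with $X\not= I$ (such an $X$ exists because $I$ is just one of the objects). Take $\gamma = \id_X$ and consider the 4-tuple $(\xi',\xi'',\eta',\eta'') = (\id_I,\id_X,\id_X,\id_I)$. Strictness of the monoidal structure yields $\xi'\otimes\xi'' = \id_I\otimes\id_X = \id_{I\otimes X} = \id_X$ and $\eta'\otimes\eta'' = \id_X\otimes\id_I = \id_{X\otimes I} = \id_X$, so $(\eta'\otimes\eta'')\circ(\xi'\otimes\xi'') = \id_X = \gamma$, placing the tuple in $TCT(\gamma)$. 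On the other hand, $\cod\xi' = I\not= X = \dom\eta'$, so the composition $\eta'\circ\xi'$ is undefined, and the tuple cannot belong to $CTC(\gamma)$.

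There is no real obstacle beyond choosing the right witness for the converse: the example exploits precisely the phenomenon isolated before the proposition, namely that $\cod(\xi'\otimes\xi'') = \dom(\eta'\otimes\eta'')$ does not imply the analogous equality on each tensor factor as soon as $I$ can be inserted as a ``slot filler'' on one factor and a genuine object $X\not= I$ on the other. One could equally well use $(\id_X,\id_I,\id_I,\id_X)$ with the same $\gamma = \id_X$; the point is that a category with more than one object always admits such asymmetric placements of $I$, whereas a one-object category forces $\cod = \dom$ on every factor.
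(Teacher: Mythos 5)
Your proof is correct and follows essentially the same route as the paper: the forward direction applies the interchange identity after noting all domains and codomains coincide in a one-object category, and the converse uses exactly the witness $(\id_I,\id_X,\id_X,\id_I)$ with $\gamma=\id_X$ (the paper phrases this direction directly rather than contrapositively, deducing $X=\dom(\id_X)=\cod(\id_I)=I$ from membership in $CTC(\id_X)$, but the underlying observation is identical).
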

\begin{proof}
If $\catc$ has only one object, then this object must be the unit object $I$
and $\cod \xi' = I = \dom \eta'$ and $\cod \xi'' =I= \dom \eta''$ for all
$(\xi', \xi'', \eta', \eta'')\in TCT(\gamma)$. Thus $TCT(\gamma)=CTC(\gamma)$
for all morphisms $\gamma$. For the converse direction, let $X$ be any object
of $\catc$. Write $\gamma = \id_X = (\id_X \otimes \id_I)\circ (\id_I \otimes \id_X)$.
Then $(\id_I, \id_X, \id_X, \id_I)\in TCT(\id_X) = CTC (\id_X)$ and hence
$\id_X = (\id_X \circ \id_I)\otimes (\id_I \circ \id_X)$. It follows that
$X=\dom (\id_X)=\cod (\id_I)=I.$
\end{proof}
Let us translate the equivalent statements of the preceding proposition
into a statement about the algebraic structure
of $Q_S (\catc)$.
\begin{prop} \label{prop.abcd}
If $\catc$ is a monoid (i.e. has only one object), then for any elements $a,b,c,d \in Q_S (\catc)$ 
such that $b$ or $c$ maps entirely into the center of $S$, the
multiplicative compatibility relation
\[ (a\times b)\cdot (c\times d) = (a\cdot c)\times (b\cdot d) \]
holds.
\end{prop}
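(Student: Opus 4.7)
The plan is to evaluate both sides at an arbitrary morphism $\gamma$ of $\catc$ and compare the resulting sums term by term, using Proposition \ref{prop.tctctconeobject} to identify the indexing sets.

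First I would expand the left-hand side. Unwinding the definition of $\cdot$ and then of $\times$ gives
\[ ((a\times b)\cdot (c\times d))(\gamma) = \sum_{\beta \circ \alpha = \gamma} (c\times d)(\beta)\cdot (a\times b)(\alpha)
 = \sum d(\beta_2) c(\beta_1) b(\alpha_2) a(\alpha_1), \]
where the final sum is taken over all four-tuples $(\alpha_1,\alpha_2,\beta_1,\beta_2)$ with $(\beta_1\otimes \beta_2)\circ (\alpha_1 \otimes \alpha_2)=\gamma$; since $\catc$ has only one object, all composability constraints are automatic, so the iterated convolution collapses to a single sum over $TCT(\gamma)$. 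Dually, I would expand the right-hand side as
\[ ((a\cdot c)\times (b\cdot d))(\gamma) = \sum_{\alpha' \otimes \beta' = \gamma} (b\cdot d)(\beta')\cdot (a\cdot c)(\alpha')
 = \sum d(\beta_2) b(\alpha_2) c(\beta_1) a(\alpha_1), \]
now summed over all $(\alpha_1,\alpha_2,\beta_1,\beta_2)$ with $(\beta_1\circ \alpha_1)\otimes (\beta_2 \circ \alpha_2)=\gamma$, i.e.\ over $CTC(\gamma)$.

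By Proposition \ref{prop.tctctconeobject}, the hypothesis that $\catc$ is a monoid gives $TCT(\gamma)=CTC(\gamma)$, so the two sums have the same index set. The summands differ only in the order of the two interior factors: the left-hand side has $\cdots c(\beta_1)\, b(\alpha_2)\cdots$ whereas the right-hand side has $\cdots b(\alpha_2)\, c(\beta_1)\cdots$. Under the hypothesis that $b$ or $c$ takes values in the center of $S$, these factors commute in $S$, so the summands agree term by term. Invoking the completeness of $S$ to rearrange the sums legitimately (via the partition and infinite distributivity axioms, as used already in the proofs of associativity for $\cdot$ and $\times$) yields the desired equality.

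The one place that requires care is step one, namely the reduction of the iterated convolution to a single sum over four-tuples; this uses both the infinite distributive law in the complete semiring $S$ and the Fubini-type identity (\ref{equ.sumijmonoid}) for complete monoids. Once this bookkeeping is set up, the essential content of the proof is the identification $TCT(\gamma)=CTC(\gamma)$ from the one-object hypothesis together with the centrality hypothesis that permits the single commutation $b(\alpha_2)c(\beta_1)=c(\beta_1)b(\alpha_2)$.
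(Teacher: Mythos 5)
Your proposal is correct and follows essentially the same route as the paper: expand both sides into a single sum over four-tuples, identify the index sets $TCT(\gamma)=CTC(\gamma)$ via Proposition \ref{prop.tctctconeobject} using the one-object hypothesis, and use the centrality assumption to perform the single commutation of the two interior factors. The bookkeeping points you flag (infinite distributivity and the Fubini-type identity for collapsing the iterated convolution) are exactly the ones implicitly used in the paper's calculation.
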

\begin{proof}
On an endomorphism $\gamma: I\to I$ in $\catc$,
\[ ((a\times b)\cdot (c\times d))_{II} (\gamma) =
 \sum_{\eta \circ \xi = \gamma} (c\times d)_{II} (\eta)\cdot
    (a\times b)_{II} (\xi) \hspace{4cm}\]
\begin{eqnarray*}
 & = & \sum_{\eta \circ \xi = \gamma} \Big\{ \sum_{\eta' \otimes \eta'' = \eta}
  d_{II} (\eta'')\cdot c_{II} (\eta') \Big\} \cdot
 \Big\{ \sum_{\xi' \otimes \xi'' = \xi}
  b_{II} (\xi'')\cdot a_{II} (\xi') \Big\} \\
& = & \sum_{(\xi', \xi'', \eta', \eta'')\in TCT (\gamma)}
  d_{II} (\eta'')\cdot c_{II} (\eta') \cdot
  b_{II} (\xi'')\cdot a_{II} (\xi') \\
& = & \sum_{(\xi', \xi'', \eta', \eta'')\in CTC (\gamma)}
  d_{II} (\eta'')\cdot b_{II} (\xi'') \cdot
  c_{II} (\eta')\cdot a_{II} (\xi') \\
& = & \sum_{\gamma' \otimes \gamma'' = \gamma} \Big\{
 \sum_{\eta'' \circ \xi'' = \gamma''}  d_{II} (\eta'')\cdot
  b_{II} (\xi'') \Big\} \cdot \Big\{
 \sum_{\eta' \circ \xi' = \gamma'}  c_{II} (\eta')\cdot
  a_{II} (\xi') \Big\} \\
& = & \sum_{\gamma' \otimes \gamma'' = \gamma}
  (b\cdot d)_{II} (\gamma'')\cdot (a\cdot c)_{II} (\gamma') \\
& = & ((a\cdot c)\times (b\cdot d))_{II} (\gamma).
\end{eqnarray*}
In this calculation, we were able to commute $c_{II}(\eta')$
and $b_{II}(\xi'')$ because one of these two commutes with every
element of $S$.
\end{proof}
When $\xi'$ and $\xi''$ are fixed, we shall also write
\[ CTC(\gamma; \xi', \xi'') = \{ (\eta', \eta'') \in \Mor (\catc)^2 ~|~
  (\eta' \circ \xi')\otimes (\eta'' \circ \xi'') = \gamma \}, \]
\[ TCT (\gamma; \xi', \xi'') = \{ (\eta', \eta'') \in \Mor (\catc)^2 ~|~
   (\eta' \otimes \eta'')\circ (\xi' \otimes \xi'') = \gamma \}. \]
For certain applications, let us record the following simple observation.
\begin{lemma} \label{lem.commmonoidstrictmoncat}
A commutative monoid $(C,\cdot,1_C)$ determines a small strict monoidal
category $\catc = \catc (C)$ by
\[ \operatorname{Ob} \catc = \{ I \},~ \operatorname{End}_\catc (I)=C,~ I\otimes I =I,~
 \alpha \circ \beta = \alpha \cdot \beta = \alpha \otimes \beta \]
for all $\alpha, \beta \in C$.
\end{lemma}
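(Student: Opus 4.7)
The plan is to unpack the one-object data and verify, in order, the category axioms, the bifunctoriality of $\otimes$, and strictness of the associator and unitors; the only nontrivial checkpoint is the interchange law, which is where commutativity of $C$ enters.

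First I would set up $\catc = \catc(C)$ as the data: a single object $I$, morphism set $\Hom_\catc(I,I) = C$, composition $\alpha \circ \beta := \alpha \cdot \beta$, and identity morphism $\id_I := 1_C$. Associativity of $\circ$ and the identity laws for $\id_I$ are then immediate translations of the monoid axioms for $(C,\cdot,1_C)$, so $\catc$ is a well-defined small category. Smallness is clear since $\Ob \catc$ and $\Mor \catc = C$ are both sets.

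Next I would equip $\catc$ with the monoidal structure. On objects the tensor product is forced: $I \otimes I = I$. On morphisms I declare $\alpha \otimes \beta := \alpha \cdot \beta$, which lies in $\Hom_\catc(I \otimes I, I \otimes I) = C$ as required. I need to check that $\otimes \colon \catc \times \catc \to \catc$ is a functor, i.e.\ $\id_I \otimes \id_I = \id_{I \otimes I}$ (trivial: $1_C \cdot 1_C = 1_C$) and the interchange law
\[
(\alpha \otimes \beta) \circ (\gamma \otimes \delta) \;=\; (\alpha \circ \gamma) \otimes (\beta \circ \delta)
\]
for all $\alpha,\beta,\gamma,\delta \in C$. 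Unwinding both sides via the definitions turns this into $\alpha\beta\gamma\delta = \alpha\gamma\beta\delta$ in $C$, which holds precisely because $C$ is commutative. This is the one step where the commutativity hypothesis is essential, and it is the sole obstacle worth flagging.

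Finally I would verify strictness. The unit object is $I$, and $I \otimes I = I$ together with $\alpha \otimes 1_C = \alpha = 1_C \otimes \alpha$ gives strict left and right unitors (all equal to the identity natural transformation). The associator is also the identity, since on objects $(I \otimes I) \otimes I = I = I \otimes (I \otimes I)$ and on morphisms $(\alpha \otimes \beta) \otimes \gamma = \alpha\beta\gamma = \alpha \otimes (\beta \otimes \gamma)$ by associativity of $\cdot$. The pentagon and triangle axioms then hold trivially, so $(\catc, \otimes, I)$ is a strict monoidal category, as claimed.
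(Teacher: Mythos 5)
Your proof is correct and follows essentially the same route as the paper: verify the category axioms from the monoid axioms, check bifunctoriality of $\otimes$ via the interchange law (which is exactly where commutativity of $C$ is used, as you rightly flag), and observe that associators and unitors are identities. No gaps.
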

\begin{proof}
The composition law is associative and we have $\id_I =1_C$. The tensor product $\otimes$
is strictly associative and
\[ \alpha \otimes \id_I = \alpha \cdot 1_C = \alpha = 1_C \cdot \alpha = \id_I \otimes \alpha. \]
Furthermore, $\otimes$ is a bifunctor, as
\[ (\alpha' \otimes \beta')\circ (\alpha \otimes \beta)=
  (\alpha' \cdot \beta')\cdot (\alpha \cdot \beta)=
 (\alpha' \cdot \alpha)\cdot (\beta' \cdot \beta) =
 (\alpha' \circ \alpha)\otimes (\beta' \circ \beta) \]
and $\id_I \otimes \id_I = \id_I = \id_{I\otimes I}$.
\end{proof}

\section{Fields and Category-Valued Actions}
\label{sec.fieldsactions}

The two ingredients needed to form a field theory are the fields and an action
functional on these fields. Both have to satisfy certain natural axioms. Regarding the fields,
our axioms will not deviate essentially from the usual axioms as employed in \cite{kirktqft},  
\cite{freedlecnotes}, for example. We emphasize, however, that our axiomatization
assigns fields only in codimensions $0$ and $1$, and not in higher codimensions.
Closed $n$-dimensional
topological manifolds will be denoted by $M, N, P, M_0,$ etc. Our manifolds need not be orientable.
The empty set $\varnothing$ is a manifold of any dimension. The symbol $\sqcup$ denotes
the ordinary ordered disjoint union of manifolds. It is not commutative and not associative
(see the Remark on p. 72 of \cite{masbaumrourke}),
but there are obvious canonical homeomorphisms
$M\sqcup N \cong N \sqcup M,$ $(M\sqcup N)\sqcup P \cong M\sqcup (N\sqcup P),$
$M\sqcup \varnothing \cong M \cong \varnothing \sqcup M$.
Note that the triple union $M\sqcup N \sqcup P$ is well-defined and canonically
homeomorphic to both $(M\sqcup N)\sqcup P$ and $M \sqcup (N\sqcup P)$.
An $(n+1)$-dimensional \emph{bordism} (sometimes also called spacetime in the literature) 
is a triple $(W,M,N)$, where $W$ is a compact
$(n+1)$-dimensional topological manifold with boundary $\partial W = M\sqcup N$.
The closed $n$-manifold $M$ is called the \emph{incoming boundary} of $W$ and $N$
is called the \emph{outgoing boundary} of $W$. (Strictly speaking, recording the outgoing
boundary is redundant since $N=\partial W - M$; nevertheless we find it convenient to include
$N$ in the notation as well.) We shall also say that $W$ is a bordism from $M$ to $N$.
Setting $\partial W^{\operatorname{in}} =M,$ $\partial W^{\operatorname{out}}=N,$
we may simply write $W$ for the bordism $(W, \partial W^{\operatorname{in}},
\partial W^{\operatorname{out}})$. For example, the cylinder $M\times [0,1]$ on a 
connected $M$ gives rise to three distinct bordisms, namely
$(M\times [0,1], M\times \{ 0,1 \}, \varnothing),$
$(M\times [0,1], M\times 0, M\times 1)$ and
$(M\times [0,1], \varnothing, M\times \{ 0,1 \})$.
The operation disjoint union is defined on bordisms by
\[ (W,M,N) \sqcup (W',M',N') = (W\sqcup W', M\sqcup M', N\sqcup N'). \]
If the outgoing boundary $N$ of $W$ is the incoming boundary of a bordism $W'$,
then we may glue along $N$ to obtain the bordism
\[ (W,M,N)\cup_N (W',N,P) = (W\sqcup_N W', M,P). \]
A \emph{homeomorphism} $\phi: (W,M,N)\to (W',M',N')$ \emph{of bordisms}
is a homeomorphism $W \to W'$, which preserves incoming boundaries and outgoing
boundaries, $\phi (M)=M',$ $\phi (N)=N'$.
The bordism $(W_0, M_0, N_0)$ is a \emph{subbordism} of $(W,M,N)$ if $W_0$
is a codimension $0$ submanifold of $W$ and the following two conditions are
satisfied: For every connected component $C$ of $M_0$ either $C\cap \partial W=\varnothing$
or $C\subset M$, and
for every connected component $C$ of $N_0$ either $C\cap \partial W=\varnothing$
or $C\subset N$. For instance, $(W,M,N)$ is a subbordism of
$(W,M,N)\sqcup (W',M',N')$ and it is a subbordism of
$(W,M,N)\cup_N (W',N,P)$.

\begin{defn} \label{def.fields}
A \emph{system $\Fa$ of fields} assigns to each $(n+1)$-dimensional bordism $W$ a
set $\Fa (W)$ (whose elements are called the \emph{fields} on $W$) and to every
closed $n$-manifold $M$ a set $\Fa (M)$ such that $\Fa (\varnothing)$ is a set with
one element and the following axioms are satisfied: \\

\noindent (FRES) \emph{Restrictions}: If $W_0 \subset W$ is a subbordism,
then there is a restriction map $\Fa (W)\to \Fa (W_0)$. If $M_0 \subset
M$ is a codimension $0$ submanifold, then there is a restriction map
$\Fa (M)\to \Fa (M_0)$. If $M\subset W$ is a closed (as a manifold) codimension $1$ submanifold, then
there is a restriction map $\Fa (W)\to \Fa (M)$. If $f\in \Fa (W)$ is a field, we will
write $f|_M$ for its restriction to $M$, and similarly for the other types of restriction.
All these restriction maps are required to commute with each other in the obvious way,
e.g. for $M_0 \subset M\subset W,$ the map $\Fa (W)\to \Fa (M_0)$ is the composition
$\Fa (W)\to \Fa (M)\to \Fa (M_0)$. Let $M$ be a closed (as a manifold) codimension $0$ submanifold
of $\partial W$. A given field $f\in \Fa (M)$ may be
imposed as a boundary condition by setting
\[ \Fa (W,f) = \{ F\in \Fa (W) ~|~ F|_{M} = f \}. \]
If $W$ is a bordism from $M$ to $N$ and $f\in \Fa (M),$ $g\in \Fa (N),$ we shall also
use the notation $\Fa (W,f,g) \subset \Fa (W)$ for the set of all fields on $W$ which
restrict to $f$ on the incoming boundary $M$ and to $g$ on the outgoing boundary $N$. \\

\noindent (FHOMEO) \emph{Action of homeomorphisms}: A homeomorphism
$\phi: W\to W'$ of bordisms induces contravariantly a bijection $\phi^\ast: \Fa (W')\to \Fa (W)$ such that
$(\id_W)^\ast = \id_{\Fa (W)}$ and $(\psi \circ \phi)^\ast = \phi^\ast \circ \psi^\ast$
for a homeomorphism $\psi: W' \to W''$. Similarly for $n$-dimensional homeomorphisms
$M\to N$. These induced maps are required to commute with the restriction maps of
(FRES). For example, if $M\subset W$ and $M' \subset W'$ are codimension $1$
submanifolds and $\phi: W\to W'$ restricts to a homeomorphism 
$\phi|:M\to M',$ then the diagram
\[ \xymatrix{
\Fa (W') \ar[r]^{\phi^\ast} \ar[d]_{\operatorname{res}} & \Fa (W) 
  \ar[d]^{\operatorname{res}} \\
\Fa (M') \ar[r]^{(\phi|)^\ast} & \Fa (M)
} \]
is to commute. \\

\noindent (FDISJ) \emph{Disjoint Unions}: The product of restrictions
\[ \Fa (W\sqcup W') \longrightarrow \Fa (W)\times \Fa (W') \]
is a bijection, that is, a field on the disjoint union $W\sqcup W'$ is uniquely
determined by its restrictions to $W$ and $W'$, and a field on $W$ and a field on $W'$
together give rise to a field on $W\sqcup W'$. 
Similarly, $\Fa (M\sqcup N) \longrightarrow \Fa (M)\times \Fa (N)$ 
must be a bijection in dimension $n$.\\

\noindent (FGLUE) \emph{Gluing}: Let $W'$ be a bordism from $M$ to $N$ and
let $W''$ be a bordism from $N$ to $P$. Let $W = W' \cup_N W''$ be the bordism
from $M$ to $P$ obtained by gluing $W'$ and $W''$ along $N$. Let
$\Fa (W',W'')$ be the pullback $\Fa (W')\times_{\Fa (N)} \Fa (W'')$ fitting into
a cartesian square
\[ \xymatrix{
\Fa (W',W'') \ar[r] \ar[d] & \Fa (W') \ar[d]^{\operatorname{res}} \\
\Fa (W'') \ar[r]^{\operatorname{res}} & \Fa (N). 
} \]
Since
\[ \xymatrix{
\Fa (W) \ar[r]^{\operatorname{res}} \ar[d]_{\operatorname{res}} 
  & \Fa (W') \ar[d]^{\operatorname{res}} \\
\Fa (W'') \ar[r]^{\operatorname{res}} & \Fa (N)
} \]
commutes, there exists a unique map $\Fa (W)\to \Fa (W',W'')$ such that
$\Fa (W)\to \Fa (W',W'') \to \Fa (W')$ is the restriction to $W'$ and
$\Fa (W)\to \Fa (W',W'') \to \Fa (W'')$ is the restriction to $W''$.
We require that $\Fa (W)\to \Fa (W',W'')$ is a bijection.
\end{defn}

Note that the convention for $\Fa (\varnothing)$ is consistent with axioms
(FHOMEO) and (FDISJ): The homeomorphism $\phi: M\sqcup \varnothing
\stackrel{\cong}{\longrightarrow} M$ induces a bijection
$\phi^\ast: \Fa (M) \stackrel{\cong}{\longrightarrow} \Fa (M\sqcup \varnothing)$.
Thus by (FDISJ),
\[ \Fa (M) \cong \Fa (M\sqcup \varnothing) \cong \Fa (M)\times \Fa (\varnothing)
= \Fa (M) \times \{ \pt \}. \]
For bordisms $W'$ with empty outgoing boundary and bordisms $W''$ with empty
incoming boundary, axiom (FDISJ) follows from (FGLUE) by taking $N=\varnothing$.
For then $W' \cup_{\varnothing} W'' = W' \sqcup W''$ and 
$\Fa (W',W'') = \Fa (W')\times \Fa (W'')$. However, since not all bordism
are of this type, the $(n+1)$-dimensional part of axiom (FDISJ) is not redundant.
\begin{lemma}  \label{lem.fdisjbndryconds}
Let $\Fa$ be a system of fields and
$M\subset \partial W,$ $M'\subset \partial W'$ closed codimension 0 submanifolds.
Then axiom (FDISJ) continues to hold in the presence
of boundary conditions.
More precisely: If $f\in \Fa (M\sqcup M')$ is a field, then the bijection
$\Fa (W\sqcup W') \to \Fa (W)\times \Fa (W')$ restricts to a bijection
\[ \Fa (W\sqcup W', f) \longrightarrow \Fa (W, f|_M)\times \Fa (W', f_{M'}). \]
\end{lemma}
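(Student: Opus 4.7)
The plan is to reduce the statement to a diagram chase that combines the unrestricted (FDISJ) bijection with the compatibility of restrictions demanded by (FRES). Write $f_1 = f|_M$ and $f_2 = f|_{M'}$; by the $n$-dimensional part of (FDISJ), the restriction map
\[ \Fa(M\sqcup M') \longrightarrow \Fa(M)\times \Fa(M') \]
is a bijection sending $f$ to $(f_1,f_2)$. Let $\rho:\Fa(W\sqcup W')\to \Fa(W)\times \Fa(W')$ denote the product-of-restrictions bijection supplied by the $(n+1)$-dimensional part of (FDISJ), and fix $F\in \Fa(W\sqcup W')$ with $\rho(F)=(F_1,F_2)$.

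The key point is to identify the restriction $F|_{M\sqcup M'}$ with the pair $(F_1|_M, F_2|_{M'})$. The restriction to $M\sqcup M'\subset \partial(W\sqcup W')$ may be factored in two ways: directly, and as the composition $\Fa(W\sqcup W')\to \Fa(W)\times \Fa(W')\to \Fa(M)\times \Fa(M')\to \Fa(M\sqcup M')$, where the last step is the inverse of the $n$-dimensional (FDISJ) bijection. Axiom (FRES) asserts that all restriction maps commute with each other, so these two routes agree; concretely, $F|_M = F|_W|_M = F_1|_M$ and $F|_{M'} = F_2|_{M'}$. Thus the pair associated to $F|_{M\sqcup M'}$ under the $n$-dimensional (FDISJ) bijection is exactly $(F_1|_M, F_2|_{M'})$.

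From this identification the rest is immediate. The condition $F\in \Fa(W\sqcup W',f)$ means $F|_{M\sqcup M'}=f$, which, by injectivity of the $n$-dimensional (FDISJ) bijection, is equivalent to $F_1|_M = f_1$ and $F_2|_{M'} = f_2$, i.e.\ $(F_1,F_2)\in \Fa(W,f_1)\times \Fa(W',f_2)$. Hence $\rho$ restricts to an injection $\Fa(W\sqcup W',f)\to \Fa(W,f_1)\times \Fa(W',f_2)$. Conversely, given $(F_1,F_2)$ in the target, surjectivity of $\rho$ yields an $F$ with $\rho(F)=(F_1,F_2)$, and the equivalence just established shows $F\in \Fa(W\sqcup W',f)$, so the restricted map is surjective.

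The only nontrivial step is the compatibility argument in the second paragraph: one must trust that the commutativity clauses of (FRES), stated there for three-step towers such as $M_0\subset M\subset W$, suffice to identify the two routes from $\Fa(W\sqcup W')$ to $\Fa(M\sqcup M')$. This is not really an obstacle but a bookkeeping step, since the inverse of the $n$-dimensional (FDISJ) bijection is itself natural with respect to restrictions (both $M\hookrightarrow M\sqcup M'$ and $M'\hookrightarrow M\sqcup M'$ being codimension $0$ submanifold inclusions), and both $\Fa(W\sqcup W')\to \Fa(M)$ and $\Fa(W\sqcup W')\to \Fa(M')$ admit factorizations through $\Fa(W)$ and $\Fa(W')$ respectively by (FRES).
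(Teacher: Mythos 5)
Your proof is correct and follows essentially the same route as the paper's: both use the product-of-restrictions bijection from (FDISJ), the commutativity of restrictions from (FRES) to identify $(F|_W)|_M$ with $(F|_{M\sqcup M'})|_M$, and the injectivity of the $n$-dimensional (FDISJ) bijection to recover $F|_{M\sqcup M'}=f$ from the componentwise conditions. Your phrasing as a single "if and only if" equivalence is a minor streamlining of the paper's separate injectivity/surjectivity checks, but the underlying argument is identical.
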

\begin{proof}
The bijection $\sigma: \Fa (W\sqcup W')\to \Fa (W)\times \Fa (W')$ is given by
$\sigma (G)= (G|_W, G|_{W'})$. To show that it restricts as claimed,
let $G\in \Fa (W\sqcup W')$ be a field with $G|_{M\sqcup M'} =f$.
Using the diagram of restrictions
\begin{equation} \label{equ.wwprimem}
\xymatrix{
\Fa (W\sqcup W') \ar[r] \ar[d] & \Fa (W) \ar[d] \\
\Fa (M\sqcup M') \ar[r] & \Fa (M),
} \end{equation}
which commutes by axiom (FRES), (and using also the analogous diagram for $M'$), we have
\[ (G|_W)|_M = (G|_{M\sqcup M'})|_M = f|_M,~
 (G|_{W'})|_{M'} = (G|_{M\sqcup M'})|_{M'} = f|_{M'}. \]
Thus $\sigma (G) \in \Fa (W,f|_M)\times \Fa (W',f|_{M'})$ and $\sigma$ restricts
preserving boundary conditions. \\

This restriction is injective as the restriction of the injective map $\sigma$.
To show that the restriction is surjective, let $F\in \Fa (W)$ and $F' \in \Fa (W')$
be fields with $F|_M = f|_M$ and $F'|_{M'} = f|_{M'}$. Since $\sigma$ is surjective,
there exists a field $G\in \Fa (W\sqcup W')$ such that $G|_W =F$ and $G|_{W'} =F'$.
Using again diagram (\ref{equ.wwprimem}), we find
\[ ((G|_{M\sqcup M'})|_M, (G|_{M\sqcup M'})|_{M'}) =
  ((G|_W)|_M, (G|_{W'})|_{M'}) = (F|_M, F'|_{M'}) = (f|_M, f|_{M'}). \]
Since
$\Fa (M\sqcup M') \rightarrow \Fa (M)\times \Fa (M')$
is a bijection by axiom (FDISJ), we conclude that $G|_{M\sqcup M'}=f$, that is,
$G \in \Fa (W\sqcup W',f)$.
\end{proof}

\begin{lemma}  \label{lem.fgluebndryconds}
Let $\Fa$ be a system of fields,
let $W'$ be a bordism from $M$ to $N$ and
let $W''$ be a bordism from $N$ to $P$. Let $W = W' \cup_N W''$ be the bordism
from $M$ to $P$ obtained by gluing $W'$ and $W''$ along $N$. 
Then axiom (FGLUE) continues to hold in the presence of boundary conditions.
More precisely: 
Given fields $g'\in \Fa (M),$ $g''\in \Fa (P),$ let
$\Fa (W',W'', g', g'')$ be the pullback
$\Fa (W',g') \times_{\Fa (N)} \Fa (W'',g'')$. Then,
given a field $f\in \Fa (\partial W)$, the unique map $\rho$ such that
\[ \xymatrix{
\Fa (W,f) \ar@/^1pc/[rrd] \ar[rd]^{\rho} \ar@/_1pc/[ddr] & & \\
& \Fa (W',W'',f|_M, f|_P) \ar[r] \ar[d] & \Fa (W',f|_M) \ar[d] \\
& \Fa (W'',f|_P) \ar[r] & \Fa (N)
} \]
commutes is a bijection.
\end{lemma}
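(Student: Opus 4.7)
The plan is to deduce this lemma directly from axioms (FGLUE), (FRES), and (FDISJ), mirroring the argument in Lemma \ref{lem.fdisjbndryconds}. Write $\sigma: \Fa(W) \to \Fa(W', W'')$ for the bijection provided by (FGLUE); concretely $\sigma(F) = (F|_{W'}, F|_{W''})$. I first want to check that $\sigma$ restricts to a well-defined map $\rho$ into the pullback $\Fa(W', f|_M) \times_{\Fa(N)} \Fa(W'', f|_P)$. For $F \in \Fa(W, f)$, using the commutativity of restriction diagrams from (FRES), we have $(F|_{W'})|_M = (F|_{\partial W})|_M = f|_M$ and likewise $(F|_{W''})|_P = f|_P$, so $\sigma(F)$ indeed lies in the prescribed pullback.

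Injectivity of $\rho$ is immediate, since it is the restriction of the injective map $\sigma$ to a subset.

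For surjectivity, suppose $(F', F'') \in \Fa(W', f|_M) \times_{\Fa(N)} \Fa(W'', f|_P)$, meaning $F'|_M = f|_M$, $F''|_P = f|_P$, and $F'|_N = F''|_N$. By (FGLUE) applied without boundary conditions, there is a unique $F \in \Fa(W)$ with $\sigma(F) = (F', F'')$, i.e.\ $F|_{W'} = F'$ and $F|_{W''} = F''$. It remains to show $F \in \Fa(W, f)$, that is, $F|_{\partial W} = f$. Using the commuting restrictions from (FRES), we compute
\[ (F|_{\partial W})|_M = (F|_{W'})|_M = F'|_M = f|_M, \]
and analogously $(F|_{\partial W})|_P = f|_P$. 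Since $\partial W = M \sqcup P$, axiom (FDISJ) asserts that the map $\Fa(\partial W) \to \Fa(M) \times \Fa(P)$ is a bijection, so two fields on $\partial W$ agreeing on $M$ and $P$ must coincide. Hence $F|_{\partial W} = f$, so $F \in \Fa(W, f)$ and $\rho(F) = (F', F'')$.

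There is no real obstacle here: the lemma is a routine diagram chase once (FGLUE), (FRES), and (FDISJ) are in hand. The only point requiring any care is confirming that boundary agreement on the two ``outer'' pieces $M$ and $P$ suffices to recover the boundary condition on all of $\partial W = M \sqcup P$, which is precisely what (FDISJ) delivers in dimension $n$. The argument is structurally identical to the proof of Lemma \ref{lem.fdisjbndryconds}, with (FGLUE) replacing the disjoint-union bijection.
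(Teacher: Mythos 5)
Your proof is correct and follows essentially the same route as the paper's: restrict the (FGLUE) bijection $\sigma(F)=(F|_{W'},F|_{W''})$, check well-definedness via the commuting restriction squares of (FRES), get injectivity for free, and deduce surjectivity by lifting through $\sigma$ and using (FDISJ) on $\partial W=M\sqcup P$ to recover the full boundary condition. The only cosmetic difference is that the paper explicitly records $(F|_{W'})|_N=(F|_{W''})|_N$ when verifying the image lands in the pullback, which you leave implicit since $\sigma$ already maps into $\Fa(W',W'')$.
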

\begin{proof}
The bijection $\sigma: \Fa (W)\to \Fa (W',W'')$ is given by
$\sigma (G)= (G|_{W'}, G|_{W''})$. 
Let $G\in \Fa (W)$ be a field with $G|_{\partial W} =f$.
Using the diagram of restrictions
\begin{equation} \label{equ.wwprimempm}
\xymatrix{
\Fa (W) \ar[r] \ar[d] & \Fa (W') \ar[d] \\
\Fa (M\sqcup P) \ar[r] & \Fa (M),
} \end{equation}
which commutes by axiom (FRES), (and using also the analogous diagram for $P$), we have
\[ (G|_{W'})|_M = (G|_{M\sqcup P})|_M = f|_M,~
 (G|_{W''})|_{P} = (G|_{M\sqcup P})|_P = f|_P. \]
Since in addition $(G|_{W'})|_N = (G|_{W''})|_N$, we conclude that 
$\sigma (G) \in \Fa (W', W'', f|_M, f|_P)$ and thus $\rho$ is the restriction
of $\sigma$ to $\Fa (W,f)\subset \Fa (W)$. 

This restriction $\rho$ is injective as the restriction of the injective map $\sigma$.
To show that $\rho$ is surjective, let 
\[ (F',F'') \in \Fa (W', W'', f|_M, f|_P) \subset \Fa (W', W''). \]
Since $\sigma$ is surjective,
there exists a field $G\in \Fa (W)$ such that $G|_{W'} =F'$ and $G|_{W''} =F''$.
Using again diagram (\ref{equ.wwprimempm}), we find
\[ ((G|_{\partial W})|_M, (G|_{\partial W})|_P) =
  ((G|_{W'})|_M, (G|_{W''})|_P) = (F'|_M, F''|_P) = (f|_M, f|_P). \]
Since
$\Fa (\partial W)=\Fa (M\sqcup P) \to \Fa (M)\times \Fa (P)$
is a bijection by axiom (FDISJ), we conclude that $G|_{\partial W}=f$, that is,
$G \in \Fa (W,f)$.
\end{proof}

\begin{remark}
As with all axiomatic systems, the above axioms may need to be
appropriately adapted to concrete situations. For instance, the manifolds
to be considered may be decorated with additional structure,
for instance orientations. If the fields interact with the additional structure, then
the restrictions in axiom (FRES) will in general only be available for
inclusions that preserve the additional structure. In (FHOMEO), only
those homeomorphisms that preserve the structure will act on the fields.
For example, in an equivariant context, one may wish to impose (FHOMEO)
only on equivariant homeomorphisms.
In (FDISJ), the disjoint union will be assumed to be equipped with
the structure compatible to the structures on the component manifolds.
Analogous provisos apply to (FGLUE). 
The axioms can be adapted to the category of smooth manifolds
and smooth maps. The main issue there is to arrive at a correct version
of (FGLUE), since gluing two smooth maps that agree on the common
boundary component $N$ only yields a map which is continuous but
usually not smooth. This can be achieved by not only requiring equality
of the function values (as we have done in (FGLUE)), but also equality
of all higher partial derivatives. Another possibility is to require the
functions to be equal on collar neighborhoods of $N$ and then to glue the collars.
\end{remark}

\begin{example}
Let $B$ be a fixed space. Taking $\Fa (W)$ and $\Fa (M)$ to be the set of all
continuous maps $W\to B$, $M\to B$, respectively, and using the ordinary restrictions
of such maps to subspaces in (FRES), one obtains a system $\Fa$ of fields in the
sense of Definition \ref{def.fields}. The action of homeomorphisms on fields
is given by composition of fields with a given homeomorphism.
In practice, $B$ is often the classifying space
$BG$ of some topological group $G$ (which may be discrete), so that fields in that 
case have the interpretation of
principal $G$-bundles over $W$ and $M$. This has been considered for finite
groups $G$ in work of Freed and Quinn \cite{quinnfreed}, 
\cite{quinnlectaxiomtqft}, \cite{freedlecnotes}, see also \cite{dijkwitten}.
Fields of this kind are also used in the construction of the twisted
signature TFT given in Section \ref{ssec.twistedsigntft}.
Let us note in passing that taking manifolds endowed with maps to a fixed
space $B$ as \emph{objects} (and not as fields on objects), one arrives
at the notion of a \emph{homotopy quantum field theory} (HQFT),
\cite{turaevhqft}. Taking $B$ to be a point, HQFTs are seen to be generalizations
of TQFTs.
In the smooth category, one may fix $B$ to be a smooth 
manifold and consider $\Fa (W)=C^\infty (W,B)$, the space of smooth maps $W\to B$. 
This is roughly the setting for Chern-Simons theory.
\end{example}

\begin{remark}
Walker's axiomatization of fields, \cite{walkertqfts}, differs from ours
(and from \cite{kirktqft}, \cite{freedlecnotes}) in that he does not allow for
codimension $0$ restrictions and he
requires the existence
of an injection $\Fa (W', W'') \hookrightarrow \Fa (W)$
in the context of the gluing axiom.
However, he does assume any field on $W$ to be close to a field in the image of
the injection in the sense that the field on $W$ can be moved by a homeomorphism, which is
isotopic to the identity and supported in a small neighborhood of $N\subset W$,
to a field coming from $\Fa (W', W'')$ under gluing. 
Walker does not require a bijection because he wants to allow for the following
application: Fields could be embedded submanifolds, or even more intricate
``designs'' on manifolds, which are transverse to the boundary. Given any
submanifold of $W$, there is no way of guaranteeing that it is transverse to
$N$ (though it can be made so by an arbitrarily small movement).
Thus there is no restriction map from such fields on $W$ to fields on $W'$, and
not every field on $W$ comes from one on $W'$ and one on $W''$ by gluing. 
\end{remark}

Given a system $\Fa$ of fields, the second ingredient necessary for a field theory
is an action functional defined on $\Fa (W)$ for bordisms $W$.
In classical quantum field theory, the action is usually a system of real-valued functions
$S_W: \Fa (W)\to \real$ such that the additivity axiom
\begin{equation} \label{equ.sadddisj}
S_{W\sqcup W'} (f) = S_W (f|_W) + S_{W'} (f|_{W'}),~
f\in \Fa (W\sqcup W'),
\end{equation}
is satisfied for disjoint unions, and the additivity axiom
\begin{equation} \label{equ.saddglue}
S_W (f) = S_{W'} (f|_{W'}) + S_{W''} (f|_{W''}),~
f\in \Fa (W),
\end{equation}
is satisfied for $W=W' \cup_N W''$, the result of gluing a bordism $W'$ with
outgoing boundary $N$ to a bordism $W''$ with incoming boundary $N$.
Moreover, the action should be topologically invariant: if $\phi: W \to W'$ is
a homeomorphism, then for any field $f\in \Fa (W'),$ one requires that
under the bijection $\phi^\ast: \Fa (W')\to \Fa (W)$ of (FHOMEO), 
the action is preserved,
\begin{equation} \label{equ.snatural} 
S_W (\phi^\ast f) = S_{W'} (f). 
\end{equation}
Sometimes, for example in Chern-Simons theory, the action is only well-defined
up to an integer, that is, takes values in $\real /\intg$. Thus it is better to
exponentiate and consider the
complex-valued function $T_W = e^{2\pi iS_W}:
\Fa (W)\to \cplx$ whose image lies in the unit circle. The above two additivity
axioms are then transformed into the multiplicativity axioms
\begin{equation} \label{equ.smultdisj}
T_{W\sqcup W'} (f) = T_W (f|_W) \cdot T_{W'} (f|_{W'}),
\end{equation}
and
\begin{equation} \label{equ.smultglue}
T_W (f) = T_{W'} (f|_{W'}) \cdot T_{W''} (f|_{W''}).
\end{equation}
These axioms express that the action should be local to a certain extent.
\begin{example}
In the smooth oriented category, for the system of fields $\Fa (W) = C^\infty (W,B)$, $B$ a fixed smooth
manifold, fix a differential $(n+1)$-form $\omega \in \Omega^{n+1} (B)$ on $B$.
Setting 
\[ S_W (f) = \int_W f^\ast \omega,\]
the axioms (\ref{equ.sadddisj})
and (\ref{equ.saddglue}) are satisfied.
For an orientation preserving diffeomorphism $\phi:W\to W'$, (\ref{equ.snatural}) holds.
If $\omega$ is closed and $W$ has no boundary, then $S_W (f)$ only depends on the
homotopy class of $f$. For if $f,g:W\to B$ are homotopic, then a homotopy between
them gives rise to a homotopy operator $h: \Omega^\ast (B)\to \Omega^{\ast -1} (W),$
$dh + hd = f^\ast - g^\ast,$ so that for closed $\omega$ one has
$f^\ast (\omega) - g^\ast (\omega) = dh(\omega)$. By Stokes theorem,
\[ \int f^\ast \omega - \int g^\ast \omega = \int dh(\omega)=0. \]
The Chern-Simons action is roughly of this type.
\end{example}
For a number of purposes, remembering only a real number for a given field is
too restrictive and it is desirable to retain more information about the field.
The present paper thus introduces category valued actions. We will in fact directly
axiomatize the analog of the exponential $T$ of an action.
Let $(\catc, \otimes, I)$ be a strict monoidal category. (The strictness is not a very
serious assumption, as a well-known process turns any monoidal category into
a monoidally equivalent strict one, see \cite{kassel}). 
Since in a monoidal context of bordisms, disjoint union corresponds to the
tensor product, while gluing of bordisms corresponds to the composition of
morphisms, it is natural to modify the classical axioms
(\ref{equ.smultdisj}) and (\ref{equ.smultglue}) as follows:

\begin{defn} \label{def.actions}
Given a system $\Fa$ of fields, a \emph{system $\TT$ of $\catc$-valued action
exponentials} consists of functions $\TT_W: \Fa (W)\to \Mor (\catc)$, for all
bordisms $W$, such that for the empty manifold,
$\TT_{\varnothing} (p) =\id_I,$ where $p$ is the unique element of
$\Fa (\varnothing),$ and the following three axioms are
satisfied: \\

\noindent (TDISJ) If $W\sqcup W'$ is the ordered disjoint union of two
bordisms $W,W'$, then 
\[ \TT_{W\sqcup W'} (f) = \TT_W (f|_W) \otimes \TT_{W'} (f|_{W'}) \]
for all $f\in \Fa (W\sqcup W'),$ \\

\noindent (TGLUE) If $W=W'\cup_N W''$ is obtained by gluing a bordism
$W'$ with outgoing boundary $N$ to a bordism $W''$ with incoming boundary
$N$, then
\[ \TT_W (f) = \TT_{W''} (f|_{W''}) \circ \TT_{W'} (f|_{W'}) \]
for all $f\in \Fa (W)$, and \\

\noindent (THOMEO) If $\phi: W \to W'$ is
a homeomorphism of bordisms, then for any field $f\in \Fa (W'),$ we require that
under the bijection $\phi^\ast: \Fa (W')\to \Fa (W)$ of (FHOMEO), 
\[ \TT_W (\phi^\ast f) = \TT_{W'} (f). \]
\end{defn}

There is a simple test that shows that both the tensor product and the composition
product of $\catc$ must enter into the axioms for a category-valued action, underlining
the correctness of the above definition:
\begin{example}
\emph{(The tautological action.)}
Let $\catc = \mathbf{Bord}(n+1)^{\operatorname{str}}$ a strict version of the
$(n+1)$-dimensional bordism category. A bordism $W$ defines a morphism
$[W] \in \Mor (\mathbf{Bord}(n+1)^{\operatorname{str}})$. Then one has the
tautological action exponential $\TT_W (f)=[W]$. It satisfies
\[ \TT_{W\sqcup W'} (f) = [W\sqcup W'] = [W]\otimes [W'] =
  \TT_W (f|) \otimes \TT_{W'} (f|) \]
and 
\[ \TT_{W'\cup_N W''} (f)= [W' \cup_N W''] = [W''] \circ [W'] =
  \TT_{W''} (f|)\circ \TT_{W'} (f|). \]
This forces the above axioms (TDISJ) and (TGLUE).
\end{example}

If the manifolds $W$ are equipped with some extra structure, then one will
in practice usually modify (THOMEO) to apply only to those homeomorphisms
that preserve the extra structure. For example, if the $W$ are oriented, one
will usually require $\phi$ to preserve orientations.
Note that under the canonical homeomorphism $\phi: W\cong W\sqcup \varnothing,$
\[ \TT_W (\phi^\ast f) = \TT_{W\sqcup \varnothing} (f) =
  \TT_W (f|_W)\otimes \TT_{\varnothing} (f|_{\varnothing}) =
 \TT_W (f|_W)\otimes \id_I = \TT_W (f|_W), \]
using axioms (TDISJ) and (THOMEO). If $W = M\times I$ is the cylindrical bordism
from $M$ to $M$,
then we do \emph{not} require that $\TT_{M\times I}$ is an identity
morphism.
If $W'$ has empty outgoing boundary and $W''$ empty incoming boundary, then
we can ``glue'' along the empty set and get $W' \cup_{\varnothing} W'' =
W' \sqcup W''$. Thus (TGLUE) and (TDISJ) apply simultaneously and yield
\[  \TT_{W'} (f) \otimes \TT_{W''} (g) =
   \TT_{W''} (g) \circ \TT_{W'} (f). \]
In particular, the domain of any $\TT_{W'}(f)$ must be the tensor product
of the domain of $\TT_{W'}(f)$ with the domain of any $\TT_{W''}(g)$.
In practice, this usually means that the domains of all $\TT_{W''}(g)$ are
the unit object $I$ of $\catc$. But the domain of $\TT_{W''}(g)$ equals
the codomain of $\TT_{W'}(f)$. So in practice, the codomains of the
$\TT_{W'}(f)$ are usually $I$ as well. We would like to emphasize again
that these remarks apply only to bordisms whose incoming or outgoing
boundary is empty.

Let $W'_1$ be a bordism with empty outgoing boundary, 
$W'_2$ a bordism from $\varnothing$ to $N$, $W''_2$ a bordism
from $N$ to $\varnothing$ and let $W''_1$ be a bordism with empty
incoming boundary. Then we can form the bordism
\[ W = (W'_2 \sqcup W'_1) \cup_N (W''_2 \sqcup W''_1), \]
which we can also think of as
\[ W = (W'_2 \cup_N W''_2) \sqcup (W'_1 \cup_{\varnothing} W''_1). \]
These two representations of $W$ allow us to calculate the action associated with
$W$ in two different ways:
\[ \TT_W (f) = \TT_{W''_2 \sqcup W''_1}(f|) \circ \TT_{W'_2 \sqcup W'_1}(f|) =
  (\TT_{W''_2}(f|) \otimes \TT_{W''_1} (f|))\circ
  (\TT_{W'_2}(f|) \otimes \TT_{W'_1}(f|)) \]
and
\[ \TT_W (f) = \TT_{W'_2 \cup_N W''_2}(f|) \otimes \TT_{W'_1 \cup_{\varnothing} W''_1}(f|) =
  (\TT_{W''_2}(f|) \circ \TT_{W'_2}(f|))\otimes
   (\TT_{W''_1}(f|) \circ \TT_{W'_1}(f|)). \]
This implies the equation
\[  (\TT_{W''_2}(f|) \circ \TT_{W'_2}(f|))\otimes
   (\TT_{W''_1}(f|) \circ \TT_{W'_1}(f|)) =
   (\TT_{W''_2}(f|) \otimes \TT_{W''_1} (f|))\circ
  (\TT_{W'_2}(f|) \otimes \TT_{W'_1}(f|)), \]
which indeed holds automatically in any monoidal category $\catc$. 

The result of gluing two copies $W' = M\times [0,1]$ and $W'' = M\times [0,1]$
of the unit cylinder on $M$, identifying $M\times 1 \subset W'$ with
$M\times 0 \subset W''$, is $W=M\times [0,2].$ For
$(F',F'')\in \Fa (W', W''),$ axioms (TGLUE) and (THOMEO) imply the formula
\begin{equation} \label{equ.ttoncyl}
\TT_{M\times [0,1]} (2^\ast \sigma^{-1} (F',F''))=\TT_{M\times [0,1]} (F'')
  \circ \TT_{M\times [0,1]} (F'), 
\end{equation}
where $\sigma: \Fa (W)\to \Fa (W',W'')$ is the bijection of axiom (FGLUE)
and $2:M\times [0,1]\to M\times [0,2]$ is the stretching homeomorphism
$2(x,t)=(x,2t),$ $x\in M,$ $t\in [0,1]$.

\begin{remark}
The classical axioms (\ref{equ.smultdisj}) and (\ref{equ.smultglue})
do fit into the framework of Definition \ref{def.actions}.
From the perspective of this definition, the fact that in both
(\ref{equ.smultdisj}) and (\ref{equ.smultglue}) the ordinary multiplication
of complex numbers appears is just a reflection of the coincidence that
under the standard isomorphism $\cplx \otimes \cplx \cong \cplx$, the
tensor product of two $\cplx$-linear maps $\alpha, \beta: \cplx \to \cplx$
is given by multiplication $\alpha \cdot \beta$, and the composition of
two linear maps  $\alpha, \beta: \cplx \to \cplx$ \emph{also} happens to be
given by multiplication, $\alpha \cdot \beta$.
More precisely, let $\widehat{\cplx}$ be the category which has $\cplx$ as its
single object and $\Hom_{\widehat{\cplx}} (\cplx, \cplx) =
\{ \alpha: \cplx \to \cplx ~|~ \alpha \text{ is $\cplx$-linear} \}$.
Such an $\alpha$ is of course determined by $\alpha (1),$ whence
$\Hom_{\widehat{\cplx}} (\cplx, \cplx) \cong \cplx.$ In $\widehat{\cplx}$,
define $\cplx \otimes \cplx := \cplx$ and define $\alpha \otimes \beta:
\cplx \otimes \cplx = \cplx \to \cplx = \cplx \otimes \cplx$ by
$(\alpha \otimes \beta)(1) = \alpha (1)\cdot \beta (1)$. 
Taking $I=\cplx,$ $(\widehat{\cplx}, \otimes, I)$ is a strict monoidal
category. If a classical action exponential $T_W: \Fa (W)\to \cplx$ is
interpreted as a $\widehat{\cplx}$-valued action exponential
$\TT_W: \Fa (W)\to \cplx \cong \Mor (\widehat{\cplx}),$ then
(TDISJ) translates to (\ref{equ.smultdisj}) and (TGLUE) translates
to (\ref{equ.smultglue}).
\end{remark}

\begin{remark}
\emph{(On cutting.)}
Suppose that $W$ is an oriented bordism and $M\hookrightarrow W$ a
closed oriented codimension $1$ submanifold situated in the interior of $W$.
Let $W^\cut$ be the compact manifold with boundary
$\partial W^\cut = \partial W \sqcup M \sqcup M$ obtained from $W$ by
cutting along $M$. The problem is that, contrary to the operations of
disjoint union and gluing, this construction is \emph{not}
well-defined on bordisms because there is no canonical way to define the
incoming and outgoing boundary of $W^\cut$.
(Should $M\sqcup M$ belong to the incoming or outgoing boundary?
Should one of them belong to the incoming and the other to the outgoing
boundary? If so, which of the two copies is incoming and which outgoing?)
Our field axioms do not provide for an equalizer diagram
$\Fa (W)\to \Fa (W^\cut)\rightrightarrows \Fa (M)$ and our action axioms do \emph{not} stipulate 
\begin{equation} \label{equ.ttcut}
\TT_{W^\cut} (f^\cut) = \TT_W (f),
\end{equation} 
where $f^\cut$ is the
image of $f$ under a putative $\Fa (W)\to \Fa (W^\cut)$.
Such axioms are classically sometimes adopted, for example in \cite{freedlecnotes},
and are strongly motivated by thinking of actions as being given by integrals
of pullbacks of differential forms. In the setting of the present paper,
we wish to think of actions in much more general terms. For instance,
actions might be certain subspaces of $W$ associated to fields. But if $W$
is cut, then these subspaces are also cut and consequently
(\ref{equ.ttcut}) cannot hold.
\end{remark}

The next definition will be used in Section \ref{sec.cylidemproj}, when
we discuss the behavior of a certain projection operator on tensor products of states. 
The projection is associated to the state sum of cylinders.
\begin{defn}
A system $\TT$ of $\catc$-valued action exponentials is called
\emph{cylindrically firm}, if
\[ CTC (\gamma; \TT_{\mti} (F_M), \TT_{\nti} (F_N)) =
   TCT (\gamma; \TT_{\mti} (F_M), \TT_{\nti} (F_N)) \]
for all morphisms $\gamma$ in $\catc$, closed $n$-manifolds $M,N$
and fields $F_M \in \Fa (\mti),$ $F_N \in \Fa (\nti)$.
Here, $\mti$ is to be read as the bordism from $M\times 0$ to $M\times 1$,
similarly for $\nti$.
\end{defn}
For instance, by Proposition \ref{prop.tctctconeobject},
$\TT$ is cylindrically firm if $\catc$ is a monoid.
\begin{prop}
Let $\TT$ be cylindrically firm. Then the codomain of every
$\TT_{M\times [0,1]} (F)$ is the unit object $I$ of $\catc$.
Furthermore, if $F|_{M\times 0} = F|_{M\times 1},$ then
$\TT_{M\times [0,1]}(F)$ is an endomorphism of the unit object.
\end{prop}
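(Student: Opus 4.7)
The plan is to use cylindrical firmness with tautological test morphisms to pin down the codomain of a cylinder action, and then to invoke the gluing axiom (TGLUE) for the second assertion. First, fix a closed $n$-manifold $M$ and $F \in \Fa(\mti)$; set $\xi' = \TT_{\mti}(F)$ and $X = \cod \xi'$. To activate firmness I need a second slot, so choose $N = M$, $F_N = F$, $\xi'' = \xi'$, $Y = X$. The goal is $X = I$.

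The key step is to plug the pair $(\eta', \eta'') = (\id_I, \id_{X \otimes X})$ into $TCT(\gamma; \xi', \xi')$. Because $\catc$ is strict, $\dom \eta' \otimes \dom \eta'' = I \otimes (X \otimes X) = X \otimes X = \cod(\xi' \otimes \xi')$, so $\eta' \otimes \eta''$ is composable with $\xi' \otimes \xi'$, and the composition equals $\xi' \otimes \xi'$. Put $\gamma := \xi' \otimes \xi'$; then the pair lies in $TCT(\gamma; \xi', \xi')$. By cylindrical firmness it lies in $CTC(\gamma; \xi', \xi')$, which in particular requires $\eta' \circ \xi'$ to be a legal composition, forcing $\cod \xi' = \dom \eta' = I$. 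Hence $X = I$, proving the first assertion.

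For the second assertion, suppose $F \in \Fa(\mti)$ satisfies $F|_{M \times 0} = F|_{M \times 1}$. Then $(F, F) \in \Fa(\mti) \times_{\Fa(M)} \Fa(\mti) = \Fa(\mti, \mti)$ in the sense of (FGLUE), so $\sigma^{-1}(F, F) \in \Fa(M \times [0,2])$ is well-defined. Identity (\ref{equ.ttoncyl}) then reads
\[ \TT_{\mti}(2^\ast \sigma^{-1}(F, F)) = \TT_{\mti}(F) \circ \TT_{\mti}(F), \]
which asserts that the right-hand side is a legitimate composition in $\catc$. Therefore $\dom \TT_{\mti}(F) = \cod \TT_{\mti}(F)$, which equals $I$ by the first part, so $\TT_{\mti}(F)$ is an endomorphism of $I$.

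The only real obstacle is conceptual: one must notice that firmness becomes informative precisely when one feeds in test morphisms whose tensor-product composability is automatic in a strict monoidal category via $I \otimes A = A$, but whose individual composability with the cylinder actions is non-trivial and forces the codomain to be $I$. Once this device is identified, both conclusions follow immediately without further computation.
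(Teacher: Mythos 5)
Your proof is correct and uses the same device as the paper's: exhibit a pair that lies in a $TCT$ set for trivial reasons (strictness makes the tensored morphisms composable with $\xi'\otimes\xi''$) and whose forced membership in the corresponding $CTC$ set requires an individual composition $\eta'\circ\xi'$ to be defined, pinning $\cod\xi'$ to $I$; the second half is exactly the paper's appeal to equation (\ref{equ.ttoncyl}) via the diagonal element $(F,F)$ of the pullback. The only difference is the instantiation of firmness --- the paper puts $M=\varnothing$ in the first slot so that $\xi'=\TT_{\varnothing}(p)=\id_I$ and tests with $(\id_X,\id_I)$, whereas you take $N=M$, $F_N=F_M=F$ and test with $(\id_I,\id_{X\otimes X})$ --- and both versions are valid.
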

\begin{proof}
For $M=\varnothing$, we have $\TT_{M\times [0,1]}(F_M)=\TT_{\varnothing}(p)=\id_I$.
Taking $\gamma = \TT_{N\times [0,1]}(F_N)$, the equation
\[ (\id_X \otimes \id_I)\circ (\TT_{\varnothing} (p) \otimes \TT_{N\times [0,1]}(F_N))=
   \TT_{N\times [0,1]}(F_N) \]
holds, where $X$ is the codomain of $\TT_{N\times [0,1]}(F_N)$.
This places $(\eta', \eta'')=(\id_X, \id_I)$ into
\[ TCT (\TT_{N\times [0,1]}(F_N); \TT_{\varnothing}(p), \TT_{N\times [0,1]}(F_N))
 =  CTC (\TT_{N\times [0,1]}(F_N); \TT_{\varnothing}(p), \TT_{N\times [0,1]}(F_N)). \]
Therefore,
\[ (\id_X \circ \id_I)\otimes (\id_I \circ \TT_{N\times [0,1]}(F_N)) =
  \TT_{N\times [0,1]}(F_N) \]
so that in particular $X=I$. Let $M$ be any closed $n$-manifold.
If $F\in \Fa (M\times [0,1])$ satisfies $F|_{M\times 0} = F|_{M\times 1},$ then
the diagonal element $(F,F)$ lies in the pullback
$\Fa (M\times [0,1], M\times [0,1])$ and thus Equation (\ref{equ.ttoncyl}) shows that
$\dom \TT_{M\times [0,1]}(F) = \cod \TT_{M\times [0,1]}(F) =I$.
\end{proof}
It follows from the proposition that for a cylindrically firm system of action exponentials,
the above $TCT$ and $CTC$ sets can be nonempty only for $\gamma$ that factor
through the unit object.

\section{Quantization}
\label{sec.constft}

We shall define our positive TFT $Z$ in this section. We will specify the state module $Z(M)$ for a closed
$n$-manifold $M$ as well as an element
$Z_W \in Z(\partial W)$, the \emph{Zustandssumme}, for a bordism $W$,
which may have a nonempty boundary $\partial W$.
Neither $M$ nor $W$ have to be oriented; thus $Z$ will be a 
nonunitary theory. In \cite{wittenqftjones}, Witten starts out with
the phase space $\mathcal{M}_0$ of all connections on a trivial $G$-bundle
over $\Sigma \times \real^1$, where $\Sigma$ is a Riemann surface and $G$
a compact simple gauge group. In Section 3 of \emph{loc. cit.}, he carries
out the quantization of Chern-Simons theory on $\Sigma$ in two steps:
First, constraint equations are imposed, which reduce $\mathcal{M}_0$ to
the finite dimensional moduli space $\mathcal{M}$ of flat connections, where two flat connections
are identified if they differ by a gauge transformation. Second, Witten's
quantum Hilbert space (state module) $\mathcal{H}_\Sigma$ is obtained by
taking functions on $\mathcal{M},$ more precisely, global holomorphic sections
of a certain line bundle on $\mathcal{M}$. This provides a model for our construction
the state module $Z(M)$.\\ 

Fix a complete semiring $S$, which will play the role of a ground semiring for the
theory to be constructed. To any given system $\Fa$ of fields, strict monoidal small
category $(\catc, \otimes, I)$, and system $\TT$ of $\catc$-valued action
exponentials, we shall now associate a positive topological field theory $Z$.
In Section \ref{sec.semiringmoncat}, we have seen that $\catc$ determines
two complete semirings: the composition semiring
$Q^c = (Q_S (\catc),+,\cdot,0,1),$ whose multiplication $\cdot$ encodes the
composition law of $\catc$, and the monoidal semiring
$Q^m = (Q_S (\catc),+,\times,0,1^\times),$ whose multiplication $\times$
encodes the monoidal structure on $\catc$, i.e. the tensor functor $\otimes$.
Both of these semirings have the same underlying (complete) additive monoid
$(Q_S (\catc),+,0)$. For a closed $n$-dimensional manifold $M$, we define
its \emph{pre-state module} to be
\[ E(M) = \fun_Q (\Fa (M)), \]
where we have abbreviated $Q = Q_S (\catc)$. 
By Proposition \ref{prop.funsemimod},
$E(M)$ is a two-sided $Q^c$-semialgebra and a two-sided
$Q^m$-semialgebra.
We observe that for the empty manifold, 
\[ E(\varnothing) = \fun_Q (\Fa (\varnothing)) =
\fun_Q (\{ \pt \}) \cong Q. \]
We now impose the constraint equation
\begin{equation} \label{equ.constraint}
 z(\phi^\ast f)=z(f) 
\end{equation}
on pre-states $z\in E(M)$, where $f\in \Fa (M)$ and $\phi:M\to M$ is
a homeomorphism, which is isotopic to the identity.
In other words, call two fields $f,g\in \Fa (M)$ equivalent, if there is a
$\phi$, isotopic to the identity, such that $g=\phi^\ast (f)$.
Let $\ovf (M)$ be the set of equivalence classes. If $M$ is empty,
$\ovf (M)$ consists of a single element.
We define the \emph{state module} (or \emph{quantum Hilbert space}) of $M$
to be
\[ Z(M) = \fun_Q (\ovf (M)) = \{ z: \Fa (M)\to Q ~|~ z(\phi^\ast f)=z(f) \} \subset E(M). \]
Then $Z(M)$ is
a two-sided $Q^c$-semialgebra and a two-sided
$Q^m$-semialgebra. It is in general infinitely generated as a semimodule.
Again, for the empty manifold $Z(\varnothing)\cong Q$.

\begin{remark} \label{rem.wrongstatemodule}
Attempting to define the (pre-)state module of a manifold $M$ with
connected components $M_1,\ldots, M_k$ as the subsemimodule
of $\fun_Q (\Fa (M_1 \sqcup \cdots \sqcup M_k))$ consisting
of all $z$ that can be written as
\[ z(f) = \sum_{i=1}^l z_{i1}(f|_{M_1})\times
  z_{i2}(f|_{M_2})\times \cdots \times z_{ik}(f|_{M_k}) \]
for suitable functions $z_{ij} \in \fun_Q (\Fa (M_j))$, 
leads to an incorrect state module. The reason is that it will
generally not contain the state sum of an $(n+1)$-manifold $W$ with
$\partial W=M$, as Example \ref{exple.statesumdoesnotdecomp} below shows.
See also Remark \ref{rem.wrongfuncttensprod}. 
\end{remark}

\begin{prop} \label{prop.zmndisjtensdecomp}
If $M$ and $N$ are closed $n$-manifolds and $M\sqcup N$ their ordered disjoint
union, then the restrictions to $M$ and $N$ induce an isomorphism
\[ Z(M\sqcup N) \cong Z(M) \hotimes Z(N) \]
of two-sided $Q^c$-semialgebras and of two-sided $Q^m$-semialgebras.
\end{prop}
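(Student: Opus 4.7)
The plan is to construct the isomorphism at the level of pre-state modules $E(-)$ first, using axiom (FDISJ) together with Definition \ref{def.funtensorprod}, and then verify that it restricts to the state modules by analyzing the isotopy constraint.

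First, axiom (FDISJ) provides a bijection
\[ \rho: \Fa(M \sqcup N) \stackrel{\cong}{\longrightarrow} \Fa(M) \times \Fa(N),\quad \rho(F) = (F|_M, F|_N). \]
Combining $\rho^\ast$ with the defining equation $\fun_Q(\Fa(M)) \hotimes \fun_Q(\Fa(N)) = \fun_Q(\Fa(M) \times \Fa(N))$ from Definition \ref{def.funtensorprod} yields an isomorphism
\[ \Phi: E(M \sqcup N) \stackrel{\cong}{\longrightarrow} E(M) \hotimes E(N),\quad \Phi(z)(f,g) = z(\rho^{-1}(f,g)). \]
Since addition, multiplication (under either $\cdot$ or $\times$) and scalar multiplication on both sides are defined pointwise on the arguments, $\Phi$ is simultaneously an isomorphism of two-sided $Q^c$-semialgebras and of two-sided $Q^m$-semialgebras.

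The key step is to show that $\Phi$ carries $Z(M \sqcup N)$ onto $Z(M) \hotimes Z(N)$. The engine for this is the following lemma: a homeomorphism $\psi : M \sqcup N \to M \sqcup N$ is isotopic to $\id_{M \sqcup N}$ if and only if $\psi = \phi_M \sqcup \phi_N$ for homeomorphisms $\phi_M : M \to M$, $\phi_N : N \to N$ that are respectively isotopic to $\id_M$ and $\id_N$. The direction ($\Leftarrow$) is immediate: take the disjoint union of the two isotopies. For ($\Rightarrow$), given an isotopy $H : (M \sqcup N) \times [0,1] \to M \sqcup N$ with $H_0 = \id$ and $H_1 = \psi$, fix $x \in M$; the path $t \mapsto H_t(x)$ is continuous and therefore remains in the path-component of $M \sqcup N$ containing $x$, which is contained in $M$. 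Hence $\psi(M) \subseteq M$, and by symmetry $\psi(N) \subseteq N$; since $\psi$ is a homeomorphism, equality holds, so $\psi = \phi_M \sqcup \phi_N$, and the restrictions of $H$ to $M \times [0,1]$ and $N \times [0,1]$ supply the required component isotopies.

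Now I would combine this lemma with axiom (FHOMEO), which guarantees that restriction commutes with the induced maps on fields; hence $\rho \circ (\phi_M \sqcup \phi_N)^\ast = (\phi_M^\ast \times \phi_N^\ast) \circ \rho$. Therefore $z \in Z(M \sqcup N)$ (i.e. $z$ is invariant under $\psi^\ast$ for every $\psi$ isotopic to the identity) if and only if $\Phi(z)$ is invariant under $\phi_M^\ast \times \phi_N^\ast$ for every $\phi_M, \phi_N$ isotopic to the respective identities. On the other hand, unravelling Definition \ref{def.funtensorprod} gives
\[ Z(M) \hotimes Z(N) = \fun_Q(\ovf(M)) \hotimes \fun_Q(\ovf(N)) = \fun_Q(\ovf(M) \times \ovf(N)), \]
and the functions in $\fun_Q(\Fa(M) \times \Fa(N))$ which descend to the product of equivalence class sets are exactly those invariant under all such $\phi_M^\ast \times \phi_N^\ast$. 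Thus $\Phi$ restricts to a bijection $Z(M \sqcup N) \to Z(M) \hotimes Z(N)$, and this restriction is automatically a homomorphism for both semialgebra structures because $\Phi$ already is. The main obstacle is the lemma characterizing isotopies of a disjoint union; once that is in place, the rest is formal.
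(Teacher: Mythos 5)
Your proof is correct and takes essentially the same approach as the paper: the paper constructs the bijection $\ovf (M\sqcup N) \cong \ovf (M)\times \ovf (N)$ directly and applies $\fun_Q$, and the two well-definedness checks there (a homeomorphism of $M\sqcup N$ isotopic to the identity preserves components and restricts to isotopies, and conversely a disjoint union of component isotopies is an isotopy of the union) are precisely the two directions of your key lemma. The only difference is packaging — you build the isomorphism on the pre-state modules $E(-)$ and then restrict, rather than working on $\ovf$ from the start.
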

\begin{proof}
We define a map
\[
\overline{\rho}: \ovf (M\sqcup N) \longrightarrow \ovf (M)\times \ovf (N)
\]
by $[f] \mapsto ([f|_M], [f|_N])$. We need to prove that this is well-defined.
Suppose that $\phi \in \Homeo (M\sqcup N)$ is isotopic to the identity, so that
$[\phi^\ast f]=[f]$.
Then $\phi$ induces the identity map on $\pi_0 (M\sqcup N)$ and thus restricts to
homeomorphisms $\phi|_M \in \Homeo (M),$ $\phi|_N \in \Homeo (N).$
Similarly an isotopy $H: (M\sqcup N)\times [0,1] \to M\sqcup N$ from $\phi$
to the identity restricts to isotopies $H|:M\times [0,1]\to M$, $H|:N\times [0,1]\to N$.
Hence $\phi|_M$ is isotopic to $\id_M$ and $\phi|_N$ is isotopic to $\id_N$.
Using the commutative diagram
\[ \xymatrix{
\Fa (M\sqcup N) \ar[r]^{\phi^\ast}_{\cong} \ar[d]_{\operatorname{res}} & \Fa (M\sqcup N) 
  \ar[d]^{\operatorname{res}} \\
\Fa (M) \ar[r]^{(\phi|_M)^\ast}_{\cong} & \Fa (M)
} \]
provided by axiom (FHOMEO), we arrive at
\[ [(\phi^\ast f)|_M] = [(\phi|_M)^\ast (f|_M)] = [f|_M], \]
and similarly $[(\phi^\ast f)|_N] = [f|_N].$ A map in the other direction
\begin{equation} \label{equ.resovfprodtomn}
\ovf (M)\times \ovf (N) \longrightarrow \ovf (M\sqcup N)
\end{equation}
is given as follows: 
By axiom (FDISJ), the product of restrictions $\rho: \Fa (M\sqcup N)\to
\Fa (M)\times \Fa (N)$ is a bijection. Thus given a pair of fields
$(f,f')\in \Fa (M)\times \Fa (N),$ there exists a unique field $F\in \Fa (M\sqcup N)$
such that $F|_M =f$ and $F|_N =f'$. Then (\ref{equ.resovfprodtomn})
is defined as $([f], [f'])\mapsto [F]$. Again, it must be checked that this is
well-defined. Suppose $\phi \in \Homeo (M), \psi \in \Homeo (N)$ are 
isotopic to $\id_M,$ $\id_N$, respectively, so that $[\phi^\ast f]=[f]$ and
$[\psi^\ast f']=[f']$. Let $G\in \Fa (M\sqcup N)$ be the unique field with
$G|_M = \phi^\ast f$ and $G|_N = \psi^\ast f'$. We have to show that
$[G]=[F]$ on the disjoint union. The disjoint union
$\Phi = \phi \sqcup \psi$ defines a homeomorphism $\Phi: M\sqcup N \to
M\sqcup N$. It is isotopic to the identity via the disjoint union
$H\sqcup H': (M\sqcup N)\times [0,1] \to M\sqcup N$ of isotopies
$H$ from $\phi$ to $\id_M$ and $H'$ from $\psi$ to $\id_N$. 
Since
\[ (\Phi^\ast F)|_M = \phi^\ast (F|_M) = \phi^\ast f= G|_M \]
and similarly $(\Phi^\ast F)|_N = G|_N,$ we have $\Phi^\ast F = G$ by uniqueness.
Therefore, $[G] = [\Phi^\ast F]=[F]$ as required.

The two maps $\overline{\rho}$ and (\ref{equ.resovfprodtomn}) are
inverse to each other, and thus are both bijections.
As discussed in Section \ref{sec.funsemimods},
$\overline{\rho}$ induces a morphism of two-sided semialgebras (over $Q^c$ and over $Q^m$)
\[ \fun (\overline{\rho}): Z(M) \hotimes Z(N) = \fun_Q (\ovf (M)\times \ovf (N))
 \longrightarrow \fun_Q (\ovf (M\sqcup N)) = Z(M\sqcup N) \]
and $\overline{\rho}^{-1}$ induces a morphism of two-sided semialgebras
\[ \fun (\overline{\rho}^{-1}): Z (M\sqcup N) \longrightarrow Z(M) \hotimes Z(N). \]
These two morphisms are inverse to each other by the functoriality of $\fun_Q$.
\end{proof}
Let $W$ be a bordism. A field $F\in \Fa (W)$ determines an
element $T_W (F)\in Q_S (\catc)$ by
\begin{equation} \label{equ.deftwf}
T_W  (F)_{XY} (\gamma) = \begin{cases}
1, & \text{ if } \gamma = \TT_W (F) \\
0, & \text{ otherwise,} \end{cases}
\end{equation}
where $\gamma$ ranges over morphisms $\gamma: X\to Y$ of $\catc$ and
$1$ is the $1$-element of $S$. 
In other words, $T_W (F)= \chi_{\TT_W (F)}$ is the characteristic function 
of $\TT_W (F)$.
Suppose that $W$ is a bordism from $M$ to $N$
and $f\in \Fa (M\sqcup N)=\Fa (\partial W)$. Then we define the state sum
(or partition function) $Z_W$ of $W$ on $f$ by
\[ Z_W (f) = \sum_{F\in \Fa (W,f)} T_W (F) \in Q_S (\catc), \]
using the summation law of the complete monoid $(Q_S (\catc),+,0).$ This is
a well-defined element of $Q_S (\catc)$ that only depends on $W$ and $f$.
\begin{remark}
This sum replaces in our context the notional path integral
\[ \int_{F\in \Fa (W,f)} e^{iS_W (F)} d\mu_W \]
used in classical quantum field theory. As a mathematical object,
this path integral is problematic, since in many situations of interest, an
appropriate measure $\mu_W$ has not been defined or is known not to exist.
The present paper utilizes the notion of completeness in semirings to bypass
measure theoretic questions on spaces of fields. The appearance of the $1$-element
of $S$ in formula (\ref{equ.deftwf}) can be interpreted as a reflection of the fact that
the amplitude of the integrand in the Feynman path integral is always $1$,
$|e^{iS_W (F)}|=1$.
\end{remark}

If $\partial W = \varnothing,$ then $Z_W \in Q \cong Z(\varnothing)$.
If $W$ is empty, then $\Fa (\partial W)=\Fa (\varnothing)=\{ p \}$ is a 
singleton and $\Fa (W,p)=\Fa (W)=\{ p \}$ so that
\[ Z_{\varnothing} (p)=T_{\varnothing} (p) = \chi_{\TT_{\varnothing} (p)} =
  \chi_{\id_I} = 1^\times, \]
the unit element of the semiring $Q^m$. This accords with Atiyah's
requirement (4b) \cite[p. 179]{atiyahtqft}.
Given a morphism $\gamma: X\to Y$ in $\catc$, we have the formula
\[ Z_W (f)_{XY} (\gamma) = \sum_{F\in \Fa (W,f),~ \TT_W (F)=\gamma} 1, \]
which exhibits $Z_W (f)$ as an elaborate counting device: On a morphism
$\gamma$, it ``counts'' for how many fields $F$ on $W$, which restrict to $f$ on the
boundary, $\gamma$ appears as the action exponential of $F$. 
This is a hint that certain kinds of counting functions in number theory and
combinatorics may be expressible as state sums of suitable positive TFTs.
Our theorems, such as e.g. the gluing theorem, will then yield identities for
such functions. In Section \ref{sec.polya}, we illustrate this by deriving
P\'olya's counting theory using positive TFT methods.
In Example \ref{ssec.numbertheory}, we consider arithmetic functions arising
in number theory.
In practice, it
is often most important to know whether $Z_W (f)_{XY} (\gamma)$ is zero or
nonzero. If the ground semiring is the Boolean semiring, then
$Z_W (f)_{XY} (\gamma) =1$ if and only if there exists a field $F\in \Fa (W,f)$
such that $\TT_W (F)=\gamma$. In this case, $Z_W (f)$ admits the interpretation
as a subset $Z_W (f)\subset \Mor (\catc),$ namely
$Z_W (f) = \{ \TT_W (F) ~|~ F\in \Fa (W,f) \}$.
The main results below then show that this system of subsets transforms like a
topological quantum field theory. \\

Returning to the general discussion and letting $f$ vary, we have a pre-state vector
\[ Z_W \in E(\partial W) = E(M\sqcup N)\cong E(M)\hotimes E(N). \]
Let us discuss the topological invariance of the state sum.
A homeomorphism $\phi: M\to N$ induces as follows covariantly a pre-state map
\[ \phi_\ast: E(M) \to E(N), \]
which is an isomorphism of both two-sided $Q^c$-semialgebras and two-sided $Q^m$-semialgebras:
By axiom (FHOMEO), $\phi$ induces a bijection
$\phi^\ast: \Fa (N)\to \Fa (M)$. As shown in Section \ref{sec.funsemimods}, 
this bijection in turn induces a morphism 
\[ \phi_\ast = \fun_Q (\phi^\ast): E(M) = \fun_Q (\Fa (M)) \longrightarrow
  \fun_Q (\Fa (N)) = E(N) \]
of two-sided $Q^c$- and $Q^m$-semialgebras. Since $\phi^\ast$ is a bijection,
$\phi_\ast$ is indeed an isomorphism. 
Moreover, if $\psi: N\to P$ is another homeomorphism, then
$\psi_\ast \circ \phi_\ast = (\psi \circ \phi)_\ast: E(M)\to E(P)$
and $(\id_M)_\ast = \id_{E(M)}: E(M)\to E(M)$, that is, the pre-state module $E(-)$ is a
functor on the category of closed $n$-manifolds and homeomorphisms. 
In particular the group $\operatorname{Homeo}(M)$ of self-homeomorphisms
$M\to M$ acts on $E(M)$.

Let $\phi: W\to W'$ be a homeomorphism of bordisms. Then $\phi$ restricts to a
homeomorphism $\phi_\partial =\phi|: \partial W \to \partial W'$ which induces
an isomorphism $\phi_{\partial \ast}: E(\partial W) \to E(\partial W').$
\begin{thm} \label{thm.statetopinv}
(Topological Invariance.)
If $\phi: W\to W'$ is a homeomorphism of bordisms, then $\phi_{\partial \ast} (Z_W) = Z_{W'}$.
If $W$ and $W'$ are closed, then $\phi_{\partial \ast}=\id:Q\to Q$ and thus
$Z_W = Z_{W'}$.
\end{thm}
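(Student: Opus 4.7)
The plan is to unwind the definitions so that both sides of the claimed equation become sums over fields on a bordism, and then reindex one sum via the bijection of (FHOMEO). Concretely, for any $g\in\Fa(\partial W')$, the vector $\phi_{\partial\ast}(Z_W)$ evaluated on $g$ is $Z_W(\phi_\partial^\ast g)$, so I would aim to prove the pointwise identity
\[ \sum_{F\in\Fa(W,\phi_\partial^\ast g)} T_W(F) \;=\; \sum_{G\in\Fa(W',g)} T_{W'}(G) \]
in the complete monoid $(Q_S(\catc),+,0)$.

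The first step is to observe that $\phi^\ast\colon \Fa(W')\to\Fa(W)$ restricts to a bijection between $\Fa(W',g)$ and $\Fa(W,\phi_\partial^\ast g)$. This is the key input from the field axioms: by the commuting square in (FHOMEO) applied to the codimension-$0$ inclusion $\partial W\hookrightarrow W$, one has $(\phi^\ast G)|_{\partial W}=\phi_\partial^\ast(G|_{\partial W'})$ for every $G\in\Fa(W')$, so $G|_{\partial W'}=g$ iff $(\phi^\ast G)|_{\partial W}=\phi_\partial^\ast g$. Bijectivity of the restricted map is then inherited from bijectivity of $\phi^\ast$.

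The second step is to identify the summands term-by-term under this bijection, using (THOMEO): $\TT_W(\phi^\ast G)=\TT_{W'}(G)$, and hence their characteristic functions $T_W(\phi^\ast G)=\chi_{\TT_W(\phi^\ast G)}=\chi_{\TT_{W'}(G)}=T_{W'}(G)$ agree in $Q_S(\catc)$. Finally, I would appeal to the invariance of sums in a complete monoid under bijective reindexing (recorded just after the definition of complete monoid in Section~\ref{sec.monssemirings}) to conclude
\[ \sum_{F\in\Fa(W,\phi_\partial^\ast g)} T_W(F) = \sum_{G\in\Fa(W',g)} T_W(\phi^\ast G) = \sum_{G\in\Fa(W',g)} T_{W'}(G), \]
which is $Z_{W'}(g)$, completing the proof of the first assertion.

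For the closed case $\partial W=\partial W'=\varnothing$, the set $\Fa(\varnothing)=\{p\}$ is a singleton, so $\phi_\partial^\ast=\id$ and therefore $\phi_{\partial\ast}=\id$ on $E(\varnothing)\cong Q$; the identity $Z_W=Z_{W'}$ is then an immediate corollary of the first part. The only mild subtlety I anticipate is keeping the two directions of the induced maps straight — $\phi$ acts contravariantly on fields but covariantly on pre-state modules — so I would be careful to write out exactly which arrow $\phi_\partial^\ast$ and $\phi_{\partial\ast}$ point in, and verify the restriction compatibility in (FHOMEO) is being applied to the correct codimension-$0$ inclusion; beyond this bookkeeping, the argument is essentially formal.
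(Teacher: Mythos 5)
Your proposal is correct and follows essentially the same route as the paper: restrict $\phi^\ast$ to a bijection $\Fa(W',g)\to\Fa(W,\phi_\partial^\ast g)$ via the compatibility square of (FHOMEO), identify summands via (THOMEO) so that $T_W(\phi^\ast G)=T_{W'}(G)$, and reindex the sum using bijection-invariance of the summation law in a complete monoid. The only nitpick is that $\partial W\hookrightarrow W$ is the codimension-$1$ (not codimension-$0$) restriction of (FRES)/(FHOMEO); this does not affect the argument.
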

\begin{proof}
Let $f\in \Fa (\partial W')$ be a field.
We claim first that the bijection $\phi^\ast: \Fa (W')\to \Fa (W)$ restricts to a 
bijection $\phi^\ast_\rel: \Fa (W',f) \to \Fa (W,\phi^\ast_\partial f).$ To see
this, suppose that $F' \in \Fa (W',f)$. Then the field $\phi^\ast F' \in \Fa (W)$
satisfies
\[ (\phi^\ast F')|_{\partial W} = \phi^\ast_\partial (F'|_{\partial W'})
  = \phi^\ast_\partial (f), \]
where we have used the commutative diagram
\[ \xymatrix{
\Fa (W') \ar[r]^{\phi^\ast}_{\cong} \ar[d]_{\operatorname{res}} & \Fa (W) 
  \ar[d]^{\operatorname{res}} \\
\Fa (\partial W') \ar[r]^{\phi^\ast_\partial}_{\cong} & \Fa (\partial W)
} \]
provided by axiom (FHOMEO).
Thus $\phi^\ast F' \in \Fa (W,\phi^\ast_\partial f)$ and the desired restriction
$\phi^\ast_\rel$ exists. As $\phi^\ast$ is injective, $\phi^\ast_\rel$ is injective
as well. Given $F\in \Fa (W,\phi^\ast_\partial f),$ the field
$(\phi^{-1})^\ast (F)$ lies in $\Fa (W',f)$ and
\[ \phi^\ast_\rel ((\phi^{-1})^\ast (F)) = \phi^\ast (\phi^{-1})^\ast (F)
  = (\phi^{-1} \phi)^\ast (F) = F. \]
This shows that $\phi^\ast_\rel$ is surjective, too, and proves the claim.

Axiom (THOMEO) for the system $\TT$ of action exponentials asserts
that $\TT_W (\phi^\ast F') = \TT_{W'} (F').$ Consequently, for a morphism
$\gamma: X\to Y$ in $\catc$,
\[ T_{W'} (F')_{XY} (\gamma) = \begin{cases}
1,& \text{ if } \gamma = \TT_{W'} (F') \\
0,& \text{ otherwise} \end{cases} =
 \begin{cases}
1,& \text{ if } \gamma = \TT_{W} (\phi^\ast F') \\
0,& \text{ otherwise} \end{cases} =
 T_W (\phi^\ast_\rel (F'))_{XY} (\gamma), \]
that is,
\[ T_W (\phi^\ast_\rel (F')) = T_{W'} (F'). \]
The bijection $\phi^\ast_\rel: \Fa (W',f) \to \Fa (W,\phi^\ast_\partial f)$
implies the identity
\[ \sum_{F\in \Fa (W,\phi^\ast_\partial f)} T_W (F) =
  \sum_{F'\in \Fa (W',f)} T_W (\phi^\ast_\rel F'). \]
Hence, the pushforward of the state sum of $W$, subject to the boundary 
condition $f$, can be calculated as 
\begin{eqnarray*}
(\phi_{\partial \ast} (Z_W))(f)
& = & (\fun_Q (\phi^\ast_\partial)(Z_W))(f) =
 Z_W (\phi^\ast_\partial (f)) \\
& = & \sum_{F\in \Fa (W,\phi^\ast_\partial f)} T_W (F) =
 \sum_{F'\in \Fa (W',f)} T_W (\phi^\ast_\rel F') \\
& = & \sum_{F'\in \Fa (W',f)}  T_{W'} (F') 
=  Z_{W'} (f).
\end{eqnarray*}
\end{proof}

We will now use topological invariance to show that the state sum is really
a state, not just a pre-state.
\begin{prop}
The state sum $Z_W \in E(\partial W)$ solves the constraint equation 
(\ref{equ.constraint}). Thus $Z_W$ lies in the state module $Z(\partial W)\subset E(\partial W).$
\end{prop}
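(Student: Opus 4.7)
The plan is to lift the boundary homeomorphism $\phi$ to a self-homeomorphism of the whole bordism $W$ and then invoke the Topological Invariance theorem \ref{thm.statetopinv}.

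Concretely, fix a field $f\in \Fa(\partial W)$ and a homeomorphism $\phi:\partial W\to \partial W$ that is isotopic to $\id_{\partial W}$ via some isotopy $H:\partial W\times [0,1]\to \partial W$ with $H_0=\id$, $H_1=\phi$. By the collar neighborhood theorem for topological manifolds, there is a closed collar $c:\partial W\times [0,1]\hookrightarrow W$ onto a neighborhood of $\partial W$. Using $H$, define a self-homeomorphism $\Phi:W\to W$ by
\[ \Phi(c(x,t)) = c(H_{1-t}(x),t) \quad (x\in \partial W,~ t\in [0,1]), \]
and $\Phi = \id$ off the collar. Then $\Phi|_{\partial W}=\phi$. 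Because $\phi$ is isotopic to $\id_{\partial W}$, it acts as the identity on $\pi_0(\partial W)$ and therefore preserves the partition $\partial W = \partial W^{\operatorname{in}}\sqcup \partial W^{\operatorname{out}}$; hence $\Phi$ is a homeomorphism of bordisms in the sense of Section \ref{sec.fieldsactions}.

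Now Theorem \ref{thm.statetopinv}, applied to $\Phi:W\to W$, yields $\Phi_{\partial\ast}(Z_W)=Z_W$. Unravelling the definition $\Phi_{\partial\ast}=\fun_Q(\Phi_\partial^\ast)=\fun_Q(\phi^\ast)$ (see the discussion preceding Theorem \ref{thm.statetopinv}), this identity reads, pointwise on any $f\in \Fa(\partial W)$,
\[ Z_W(\phi^\ast f) = (\fun_Q(\phi^\ast) Z_W)(f) = (\Phi_{\partial\ast} Z_W)(f) = Z_W(f), \]
which is precisely the constraint equation (\ref{equ.constraint}). Thus $Z_W\in Z(\partial W)\subset E(\partial W)$.

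The only non-formal ingredient is the collar/isotopy extension that produces $\Phi$ from $\phi$, and this is standard for topological manifolds with boundary (Brown's collar neighborhood theorem); everything else is a direct appeal to Theorem \ref{thm.statetopinv} and the functoriality of $\fun_Q$. I would expect the slightly delicate point, should one wish to be punctilious, to be verifying that $\Phi$ truly preserves the incoming/outgoing decomposition — but this is automatic from the isotopy hypothesis, since an isotopy to the identity cannot permute components of $\partial W$.
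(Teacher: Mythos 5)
Your proof is correct and follows essentially the same route as the paper: both extend the boundary homeomorphism $\phi$ to a self-homeomorphism $\Phi$ of $W$ supported in a Brown collar by sliding along the isotopy $H$, and then apply Theorem \ref{thm.statetopinv}. The only cosmetic difference is that you write the extension explicitly on the collar, whereas the paper obtains it from the universal property of the pushout $W \cup_{\partial W} (\partial W \times [0,1]) \cong W$; your added remark that $\Phi$ preserves the incoming/outgoing decomposition is a point the paper leaves implicit.
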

\begin{proof}
Given a field $f\in \Fa (\partial W)$ and $\phi \in \Homeo (\partial W)$ isotopic to the
identity, we need to show that $Z_W (\phi^\ast f)=Z_W (f).$
Let $H: \partial W \times [0,1] \to \partial W\times [0,1]$ be a level preserving isotopy,
$H(-,t)=(-,t),$ $H(x,0)=x,$ $H(x,1)=\phi (x)$, for all $x\in \partial W$.
This isotopy fits into a commutative diagram
\[ \xymatrix@C=40pt{
W \ar@{=}[d] & \partial W \ar@{^{(}->}[l] \ar@{^{(}->}[r]^{(\id_{\partial W},0)} \ar@{=}[d] &
  \partial W\times [0,1] \ar[d]^{H}_{\cong} \\
W & \partial W \ar@{^{(}->}[l] \ar@{^{(}->}[r]^{(\id_{\partial W},0)} & \partial W \times [0,1] 
} \]
The pushout of the rows 
is homeomorphic to $W$ (using collars, which are available in the topological
category by
Marston Brown's collar neighborhood theorem \cite{browntopcollars}) via a
homeomorphism which is the identity on the boundary.
Thus the universal property of pushouts applied to the above diagram
yields a homeomorphism $\Phi: W \to W$ of bordisms, whose restriction
to the boundary $\Phi_\partial$ is $\phi$.
By Theorem \ref{thm.statetopinv} on topological invariance, we have 
$\Phi_{\partial \ast} (Z_W) = Z_W$. Consequently,
\[ Z_W (f) = \Phi_{\partial \ast} (Z_W)(f) = \phi_\ast (Z_W)(f) = Z_W (\phi^\ast f). \]
\end{proof}

Homeomorphisms between closed $n$-manifolds induce isomorphisms on
the associated state modules, as we will now explain.
\begin{lemma}
Let $\phi: M\to N$ be any homeomorphism of closed $n$-manifolds.
Then the induced isomorphism $\phi_\ast: E(M)\to E(N)$ of pre-state modules restricts 
to an isomorphism $\phi_\ast: Z(M)\to Z(N)$ of state modules.
\end{lemma}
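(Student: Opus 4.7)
The plan is to verify that if $z\in E(M)$ satisfies the constraint equation (\ref{equ.constraint}), then so does $\phi_\ast z \in E(N)$; bijectivity of the restriction then follows formally from the functoriality of $E(-)$ applied to $\phi$ and $\phi^{-1}$.

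Recall that by construction $\phi_\ast = \fun_Q(\phi^\ast)$, so for $z\in E(M)$ and $g\in \Fa(N)$ we have $(\phi_\ast z)(g) = z(\phi^\ast g)$. Let $\psi\in \Homeo(N)$ be isotopic to $\id_N$. To check the constraint for $\phi_\ast z$, I would compute, using the contravariance of $(-)^\ast$ provided by (FHOMEO),
\[
(\phi_\ast z)(\psi^\ast g) \;=\; z(\phi^\ast \psi^\ast g) \;=\; z\bigl((\psi\circ \phi)^\ast g\bigr)
 \;=\; z\bigl((\phi \circ \widetilde\psi)^\ast g\bigr) \;=\; z\bigl(\widetilde\psi^\ast (\phi^\ast g)\bigr),
\]
where $\widetilde\psi := \phi^{-1}\circ \psi \circ \phi \in \Homeo(M)$.

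The key step is the observation that $\widetilde\psi$ is isotopic to $\id_M$: conjugating a level-preserving isotopy $H:N\times[0,1]\to N$ from $\id_N$ to $\psi$ by $\phi$ yields a level-preserving isotopy $(\phi^{-1}\times \id)\circ H\circ (\phi\times \id): M\times[0,1]\to M$ from $\id_M$ to $\widetilde\psi$. This is the only geometric input, and it is the main (though elementary) point of the argument. Given that $z\in Z(M)$ and $\widetilde\psi$ is isotopic to $\id_M$, the constraint equation (\ref{equ.constraint}) for $z$ now yields $z(\widetilde\psi^\ast (\phi^\ast g)) = z(\phi^\ast g) = (\phi_\ast z)(g)$, proving $\phi_\ast z \in Z(N)$.

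To conclude, I would apply the same reasoning to $\phi^{-1}:N\to M$ to obtain a restricted map $(\phi^{-1})_\ast: Z(N)\to Z(M)$. Since $\phi_\ast \circ (\phi^{-1})_\ast = (\phi \circ \phi^{-1})_\ast = \id_{E(N)}$ and symmetrically on the other side by the functoriality of $E(-)$ already established, the two restrictions are mutually inverse, so $\phi_\ast: Z(M)\to Z(N)$ is an isomorphism.
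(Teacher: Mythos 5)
Your proof is correct and follows essentially the same route as the paper: both verify the constraint equation for $\phi_\ast z$ by conjugating the given $\psi \in \Homeo(N)$ to $\phi^{-1}\psi\phi \in \Homeo(M)$, transporting the isotopy by the same conjugation, and then invoking the constraint for $z$; the inverse is likewise obtained from $(\phi^{-1})_\ast$ by functoriality. No gaps.
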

\begin{proof}
Let $z\in Z(M)$ be a state, that is, $z:\Fa (M)\to Q$ is a function with
$z(\psi^\ast f)=z(f)$ for all $\psi \in \Homeo (M)$ isotopic to the identity.
Let $g\in \Fa (N)$ be any field on $N$ and $\xi \in \Homeo (N)$ isotopic to $\id_N$.
Let $\psi \in \Homeo (M)$ be the homeomorphism $\psi = \phi^{-1} \xi \phi$.
If $H: N\times [0,1]\to N$ is an isotopy from $\xi$ to $\id_N$, then
\[ M\times [0,1] \stackrel{\phi \times \id_{[0,1]}}{\longrightarrow}
 N\times [0,1] \stackrel{H}{\longrightarrow} N 
  \stackrel{\phi^{-1}}{\longrightarrow} M \]
is an isotopy from $\psi$ to $\id_M$. Thus
\begin{eqnarray*}
\phi_\ast (z)(\xi^\ast g) & = & (z\circ \phi^\ast)(\xi^\ast g) =
  z((\xi \phi)^\ast g) = z((\phi \psi)^\ast g) \\
& = & z(\psi^\ast (\phi^\ast g)) = z(\phi^\ast g)= \phi_\ast (z)(g).
\end{eqnarray*}
Hence $\phi_\ast (z)$ solves the constraint equation on $N$ and so
$\phi_\ast (z)\in Z(N)$. Its inverse is given by
$(\phi^{-1})_\ast: Z(N)\to Z(M)$.
\end{proof}
By the above lemma, any homeomorphism $\phi: M\to N$ induces an isomorphism
$\phi_\ast: Z(M)\to Z(N)$.
If $\psi: N\to P$ is another homeomorphism, then
$\psi_\ast \circ \phi_\ast = (\psi \circ \phi)_\ast: Z(M)\to Z(P)$
and $(\id_M)_\ast = \id_{Z(M)}: Z(M)\to Z(M)$, that is, the state module $Z(-)$ is a
functor on the category of closed $n$-manifolds and homeomorphisms. 
In particular the group $\operatorname{Homeo}(M)$ of self-homeomorphisms
$M\to M$ acts on $Z(M)$.
\begin{thm} \label{thm.isotopyinv}
(Isotopy Invariance.)
Isotopic homeomorphisms $\phi, \psi: M\to N$ induce equal isomorphisms
$\phi_\ast = \psi_\ast: Z(M)\to Z(N)$ on state modules.
In particular, the action of $\Homeo (M)$ on $Z(M)$ factors through the
mapping class group.
\end{thm}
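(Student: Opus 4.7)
The plan is to reduce isotopy of maps $M \to N$ to isotopy of self-maps $M \to M$, so that the constraint equation defining $Z(M) \subset E(M)$ can be applied directly. Concretely, I would consider the composite $\xi = \psi^{-1} \circ \phi : M \to M$ and verify that it is isotopic to $\id_M$. Indeed, if $H : M \times [0,1] \to N$ is an isotopy with $H(-,0)=\phi$ and $H(-,1)=\psi$ (each level a homeomorphism), then $K(x,t) = \psi^{-1}(H(x,t))$ is a level preserving homotopy of self-homeomorphisms of $M$ from $\xi$ to $\id_M$. This is the one small geometric step in the argument.

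Next I would exploit the constraint equation for $z \in Z(M)$: since $\xi$ is isotopic to $\id_M$, we have $z(\xi^\ast f) = z(f)$ for every $f \in \Fa(M)$. By the functoriality in axiom (FHOMEO), $\xi^\ast = (\psi^{-1}\phi)^\ast = \phi^\ast \circ (\psi^{-1})^\ast : \Fa(M)\to \Fa(M)$. Given any $g \in \Fa(N)$, I would set $f := \psi^\ast g \in \Fa(M)$ and use $(\psi^{-1})^\ast \circ \psi^\ast = (\psi \circ \psi^{-1})^\ast = (\id_N)^\ast = \id_{\Fa(N)}$ (again by (FHOMEO)) to obtain $(\psi^{-1})^\ast f = g$, and therefore
\[
\phi_\ast(z)(g) \;=\; z(\phi^\ast g) \;=\; z\bigl(\phi^\ast (\psi^{-1})^\ast f\bigr) \;=\; z(\xi^\ast f) \;=\; z(f) \;=\; z(\psi^\ast g) \;=\; \psi_\ast(z)(g).
\]
As this holds for every $g \in \Fa(N)$, the induced maps agree: $\phi_\ast = \psi_\ast : Z(M)\to Z(N)$.

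Finally, the statement about the mapping class group is an immediate consequence. Applied to $M = N$, isotopic self-homeomorphisms of $M$ induce the same automorphism of $Z(M)$, so the action of $\Homeo(M)$ on $Z(M)$ descends to $\pi_0 \Homeo(M)$, i.e.\ to the mapping class group. I do not foresee a genuine obstacle in this argument; the only point requiring minor care is the bookkeeping with the contravariance of $(-)^\ast$ and the fact that the isotopy between $\phi$ and $\psi$ must be turned into an isotopy through self-homeomorphisms of $M$ (rather than through maps $M\to N$) in order for the constraint equation on $Z(M)$ to be applicable.
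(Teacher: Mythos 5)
Your proof is correct and follows essentially the same route as the paper: both reduce to the observation that $\psi^{-1}\circ\phi$ is isotopic to $\id_M$ (via composing the isotopy with $\psi^{-1}$) and then invoke the constraint equation defining $Z(M)$. The only cosmetic difference is that you verify $\phi_\ast(z)(g)=\psi_\ast(z)(g)$ pointwise, whereas the paper phrases the same computation as $(\psi_\ast)^{-1}\circ\phi_\ast=(\psi^{-1}\phi)_\ast=\id_{Z(M)}$.
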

\begin{proof}
Let $H: M\times [0,1]\to N$ be an isotopy from $\phi$ to $\psi$.
Then $\psi^{-1} \circ H: \mti \to M$ is an isotopy from $\psi^{-1} \phi$ to $\id_M$.
Hence for $z\in Z(M),$
\[ (\psi^{-1} \phi)_\ast (z)(f) = z((\psi^{-1} \phi)^\ast f)=z(f). \]
It follows that
\[ (\psi_\ast)^{-1} \circ \phi_\ast = (\psi^{-1})_\ast \circ \phi_\ast
 = (\psi^{-1} \phi)_\ast = \id_{Z(M)} \]
and therefore $\phi_\ast = \psi_\ast$.
\end{proof}

As there are two multiplications available on $Q_S (\catc),$ there are also two
corresponding $Q_Q Q$-linear maps $\beta^c, \beta^m: E(M)\times E(N) \to
E(M)\hotimes E(N),$ given by
$\beta^c (z,z')(f,g) = z(f)\cdot z(g),$ using the multiplication $\cdot$ of the
composition semiring $Q^c$, and $\beta^m (z,z')(f,g) = z(f)\times z(g),$ 
using the multiplication $\times$ of the monoidal semiring $Q^m$.
If $(z,z')\in Z(M)\times Z(N),$ and $\phi \in \Homeo(M),$ $\psi \in \Homeo (N)$
are isotopic to the respective identities, then
\[ \beta^c (z,z')(\phi^\ast f, \psi^\ast g) =
  z(\phi^\ast f)\cdot z'(\psi^\ast g)= z(f)\cdot z'(g) = \beta^c (z,z')(f,g). \]
Thefore, $\beta^c (z,z')\in Z(M)\hotimes Z(N)$. Similarly, we have a
$Q_Q Q$-linear map $\beta^m: Z(M)\times Z(N)\to Z(M)\hotimes Z(N).$
Thus a pair of vectors $(z,z')\in Z(M)\times Z(N)$ determines two generally
different tensor products in $Z(M)\hotimes Z(N)$, namely
$z \hotimes_m z' = \beta^m (z,z')$ and $z \hotimes_c z' = \beta^c (z,z').$

\begin{thm} \label{thm.zdisjunion}
The state sum $Z_{W\sqcup W'} \in Z(\partial W \sqcup \partial W')$ of an
ordered disjoint union of bordisms $W$ and $W'$ is the tensor
product
\[ Z_{W\sqcup W'} = Z_W \hotimes_m Z_{W'} \in
  Z(\partial W)\hotimes Z(\partial W') \cong Z(\partial W \sqcup \partial W'). \]
\end{thm}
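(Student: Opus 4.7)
The plan is to verify the identity pointwise on pairs of boundary fields and to reduce it to a convolution computation using the multiplication $\times$ of $Q^m$.

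First I would unwind the identification $Z(\partial W\sqcup \partial W')\cong Z(\partial W)\hotimes Z(\partial W')$ provided by Proposition \ref{prop.zmndisjtensdecomp}: a boundary field $f\in \Fa(\partial W\sqcup \partial W')$ corresponds via axiom (FDISJ) to a pair $(f',f'')\in \Fa(\partial W)\times \Fa(\partial W')$, and evaluating $Z_W\hotimes_m Z_{W'}$ on this pair gives $Z_W(f')\times Z_{W'}(f'')$ by the definition of $\beta^m$. Thus the theorem reduces to showing
\[ Z_{W\sqcup W'}(f) \;=\; Z_W(f')\times Z_{W'}(f'') \quad\text{in } Q. \]
By Lemma \ref{lem.fdisjbndryconds}, the restriction map $\Fa(W\sqcup W',f)\to \Fa(W,f')\times \Fa(W',f'')$ is a bijection, so I can index the sum defining $Z_{W\sqcup W'}(f)$ by pairs $(F,F')$ rather than by fields on $W\sqcup W'$.

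The central computation is the key lemma that
\[ T_W(F)\times T_{W'}(F') \;=\; T_{W\sqcup W'}(F\sqcup F') \]
for all $F\in\Fa(W)$, $F'\in\Fa(W')$. To verify this I would evaluate both sides on an arbitrary morphism $\gamma:X\to Y$ of $\catc$. By axiom (TDISJ), $\TT_{W\sqcup W'}(F\sqcup F')=\TT_W(F)\otimes \TT_{W'}(F')$, so the right-hand side is the characteristic function $\chi_{\TT_W(F)\otimes \TT_{W'}(F')}$. For the left-hand side, the convolution formula defining $\times$ gives
\[ (T_W(F)\times T_{W'}(F'))_{XY}(\gamma) = \sum_{\alpha\otimes\beta=\gamma} T_{W'}(F')_{X''Y''}(\beta)\cdot T_W(F)_{X'Y'}(\alpha). \]
Each factor $T_W(F)_{X'Y'}(\alpha)$, resp.\ $T_{W'}(F')_{X''Y''}(\beta)$, is nonzero only when $\alpha=\TT_W(F)$, resp.\ $\beta=\TT_{W'}(F')$; so the sum has at most one nonzero term and equals $1$ precisely when $\gamma=\TT_W(F)\otimes\TT_{W'}(F')$ and $0$ otherwise, matching the right-hand side.

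Finally, I would assemble the pieces using the completeness of $Q^m$. By infinite distributivity of $\times$ over $+$ and the partition axiom (\ref{equ.sumijmonoid}) applied to the product indexing set,
\[ Z_W(f')\times Z_{W'}(f'') = \Bigl(\sum_{F\in \Fa(W,f')} T_W(F)\Bigr)\times \Bigl(\sum_{F'\in \Fa(W',f'')} T_{W'}(F')\Bigr) \]
equals $\sum_{(F,F')} T_W(F)\times T_{W'}(F')$, which by the key lemma equals $\sum_{(F,F')} T_{W\sqcup W'}(F\sqcup F')$, and by the bijection of Lemma \ref{lem.fdisjbndryconds} equals $Z_{W\sqcup W'}(f)$. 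The main obstacle is the verification of the key lemma: one must handle the convolution over all tensor factorizations $\alpha\otimes\beta=\gamma$ and observe that the characteristic function structure of $T_W(F)$ and $T_{W'}(F')$ collapses this convolution to a single term, which in turn hinges on axiom (TDISJ) linking disjoint union of fields with the tensor product in $\catc$.
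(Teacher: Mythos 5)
Your proposal is correct and follows essentially the same route as the paper: the pointwise key lemma $T_W(F)\times T_{W'}(F') = T_{W\sqcup W'}(F\sqcup F')$ via the collapse of the convolution of characteristic functions and axiom (TDISJ), then the reindexing by the bijection of Lemma \ref{lem.fdisjbndryconds} and the factorization of the double sum using completeness and infinite distributivity. The only difference is cosmetic (you run the chain of equalities from the product side toward $Z_{W\sqcup W'}(f)$ rather than the reverse), so there is nothing further to add.
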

\begin{proof}
Let $F \in \Fa (W\sqcup W')$ be a field on the disjoint union. Then, regarding
$Q_S (\catc)$ as the monoidal semiring $Q^m$, we have on a morphism
$\gamma: X\to Y$ of $\catc$,
\[ (T_W (F|_W)\times T_{W'} (F|_{W'}))_{XY} (\gamma) =
 \sum_{\alpha \otimes \beta = \gamma} T_{W'} (F|_{W'})_{X'' Y''} (\beta)
  \cdot T_{W} (F|_{W})_{X' Y'} (\alpha). \]
The element $T_W (F|_W)_{X' Y'} (\alpha)\in S$ is $1$ when $\alpha =
\TT_W (F|_W)$ and $0$ otherwise. Similarly,
$T_{W'} (F|_{W'})_{X'' Y''} (\beta)\in S$ is $1$ when $\beta =
\TT_{W'} (F|_{W'})$ and $0$ otherwise. Thus
\begin{eqnarray*}
(T_W (F|_W)\times T_{W'} (F|_{W'}))_{XY} (\gamma) & = &
  \sum_{\TT_W (F|_W)\otimes \TT_{W'} (F|_{W'})=\gamma} 1\cdot 1 \\
& = & \begin{cases} 1,& \text{ if } \gamma = \TT_W (F|_W)\otimes \TT_{W'} (F|_{W'}) \\
  0,& \text{ otherwise.} \end{cases}
\end{eqnarray*}
Now by axiom (TDISJ), $\TT_W (F|_W)\otimes \TT_{W'} (F|_{W'}) =
\TT_{W\sqcup W'} (F),$ which implies
\[ T_W (F|_W)\times T_{W'} (F|_{W'}) =
  T_{W\sqcup W'} (F) \]
in $Q^m$. We apply this identity in calculating the state sum of the disjoint
union subject to the boundary condition $f\in \Fa (\partial W \sqcup \partial W')$:
\[ Z_{W\sqcup W'} (f) = \sum_{F \in \Fa (W\sqcup W', f)} T_{W\sqcup W'} (F) =
  \sum_{F \in \Fa (W\sqcup W', f)} T_W (F|_W)\times T_{W'} (F|_{W'}). \]
As pointed out in Section \ref{sec.monssemirings}, if $\{ m_i \}_{i\in I},$
$\{ n_j \}_{j\in J}$ are families of elements in a complete monoid and
$\sigma: J\to I$ is a bijection such that $m_{\sigma (j)} = n_j,$ then
$\sum_{i\in I} m_i = \sum_{j\in J} n_j.$ Applying this principle to the families
\[ \{ T_W (G)\times T_{W'} (G') \}_{(G,G') \in \Fa (W, f|_{\partial W})\times
   \Fa (W', f|_{\partial W'})}, \]
\[ \{ T_W (F|_W)\times T_{W'} (F|_{W'}) \}_{F\in \Fa (W\sqcup W',f)} \]
and to the bijection
\[ \sigma: \Fa (W\sqcup W',f)\longrightarrow
 \Fa (W, f|_{\partial W})\times \Fa (W', f|_{\partial W'}) \]
given by Lemma \ref{lem.fdisjbndryconds},
we obtain that
\begin{eqnarray*}
Z_{W\sqcup W'} (f) & = & \sum_{(G,G') \in \Fa (W, f|_{\partial W})\times
   \Fa (W', f|_{\partial W'})} T_W (G)\times T_{W'} (G') \\
& = & \left\{ \sum_{G\in \Fa (W,f|_{\partial W})} T_W (G) \right\} \times
  \left\{ \sum_{G'\in \Fa (W',f|_{\partial W'})} T_{W'} (G') \right\} \\
& = & Z_W (f|_{\partial W}) \times Z_{W'} (f|_{\partial W'}) \\
& = & \beta^m (Z_W, Z_{W'}) (f|_{\partial W}, f|_{\partial W'}) \\
& = & (Z_W \hotimes_m Z_{W'})(f|_{\partial W}, f|_{\partial W'}).
\end{eqnarray*}
\end{proof}
Especially in light of the decomposition of the state sum of a disjoint union provided
by Theorem \ref{thm.zdisjunion}, one
may wonder whether the state sum $Z_W \in Z(M_1)\hotimes \cdots \hotimes
Z(M_k)$ of any compact manifold $W$ whose boundary has the connected components
$M_1,\ldots, M_k,$ can always be written as
\[ Z_W (f) = \sum_{i=1}^l z_{i1}(f|_{M_1})\times
  z_{i2}(f|_{M_2})\times \cdots \times z_{ik}(f|_{M_k}) \]
for suitable functions $z_{ij} \in \fun_Q (\Fa (M_j))$, see
Remarks \ref{rem.wrongfuncttensprod} and \ref{rem.wrongstatemodule}. 
The following example shows that this is not the case.

\begin{example} \label{exple.statesumdoesnotdecomp}
Take $n=0$ and the Boolean semiring $\bool$ as the ground semiring.
Let $\catc$ be the category with one object $I$ and one morphism,
$\id_I: I\to I$. This category has a unique monoidal structure, which is strict.
Then
\[ Q_{\bool} (\catc) = \fun_{\bool} (\Hom_{\catc} (I,I)) =
  \fun_{\bool} (\{ \id_I \}) \cong \bool \]
is the Boolean semiring. For a $1$-dimensional bordism $W$,
let $\Fa (W)$ be the locally constant functions on $W$ with values in the
natural numbers $\nat = \{ 0,1,2,\ldots \}$, that is, if $W$ has $k$
connected components, then $\Fa (W) = \nat^k$.
Fields $\Fa (M)$ on a closed $n$-manifold $M$ are defined in the same way.
The restrictions are given by the ordinary restriction of functions to subspaces.
Homeomorphisms $W\cong W'$ and $M\cong N$ act on fields by permuting
function values in a manner consistent with the permutation which the
homeomorphism induces on the connected components.
Since two locally
constant functions that agree on a common boundary component glue to give
a locally constant function again, $\Fa$ is a system of fields.
The action exponential $\TT_W: \Fa (W)\to \Mor (\catc),$ 
$\TT_W (F)=\id_I$ is uniquely determined. Then $\TT$ is indeed a valid
system of action exponentials. 
Now let $W$ be the unit interval $[0,1]$, whose boundary $\partial W = M\sqcup N$
is given by the incoming boundary $M= \{ 0 \}$ and the outgoing boundary $N=\{ 1 \}$.
Let $f = (m,n)\in \Fa (\partial W) \cong \Fa (M)\times \Fa (N) = \nat \times \nat$ be
a boundary condition. If $m\not= n$, then $\Fa (W,f)$ is empty, for there is no
constant function on the unit interval that restricts to two distinct numbers on the
endpoints. Consequently,
\[ Z_W (m,n) = \sum_{F\in \Fa (W,(m,n))} T_W (F) =0\in Q \cong \bool. \]
On the other hand, if $m=n$, then $\Fa (W,(m,n))$ consists of a single element,
namely $m=n$, so that
$Z_W (m,n) = T_W (m) = 1\in \bool.$
Therefore, $Z_W (m,n) = \delta_{mn}$, that is, viewed as a countably
infinite Boolean value matrix, $Z_W$ is the identity matrix.
But it follows from results of \cite{banagl-tensor} that the identity matrix is not in the image of
\[ \mu:\fun_{\bool} (\nat)\otimes \fun_{\bool} (\nat) \longrightarrow
  \fun_{\bool} (\nat \times \nat). \] 
This can also be seen on foot by observing that a representation
$\delta_{mn} = \sum_{i=1}^l z_i (m)z'_i (n)$ (where we assume that $\delta_{mn}$
cannot be expressed in such a form with fewer than $l$ terms), implies that
$z_i (m) = \delta_{mn_i},$ $z'_i (n)=\delta_{n_i n}$ for certain $n_1,\ldots, n_l$.
But the finite sum $\sum_{i=1}^l \delta_{mn_i} \delta_{n_i n}$ can certainly
not equal $\delta_{mn}$ for all $m,n$ as follows by taking $m=n\not\in
\{ n_1,\ldots, n_l \}$.
\end{example}

Let $M,N,P$ be closed $n$-manifolds.
The contraction $\gamma$ of Section \ref{sec.funsemimods} defines a map
\[ \gamma: E(M) \hotimes E(N) \hotimes E(N) \hotimes E(P) \longrightarrow
  E(M) \hotimes E(P). \]
This contraction allows us to define an inner product
$\langle z,z' \rangle$ of a vector $z\in E(M)\hotimes E(N)$ and a vector
$z' \in E(N)\hotimes E(P)$ by setting
\[ \langle z,z' \rangle = \gamma (z\hotimes_c z') \in E(M)\hotimes E(P). \]
By Proposition \ref{prop.contractbilinear},
this inner product
\[ \langle -,- \rangle: (E(M)\hotimes E(N))\times (E(N)\hotimes E(P)) 
  \longrightarrow E(M)\hotimes E(P) \]
is $Q^c_{Q^c} Q^c$-linear.
By Proposition \ref{prop.contractassoc}, the inner product is associative,
i.e. for a fourth $n$-manifold $R$ and a vector $z''\in E(P)\hotimes E(R),$
one has $\langle \langle z,z' \rangle, z'' \rangle = \langle z, \langle z',z'' \rangle \rangle$
in $E(M)\hotimes E(R)$.
An injection $Z(M)\hotimes Z(N)\hookrightarrow E(M)\hotimes E(N)$ is given by
sending $z:\ovf (M)\times \ovf (N)\to Q$ to
\[ \xymatrix@C=40pt{ \Fa (M)\times \Fa (N) 
  \ar@{->>}[r]^{\operatorname{quot}\times \operatorname{quot}}
  & \ovf (M)\times \ovf (N) \ar[r]^z & Q.} \] 
Then the inner product $\langle z,z' \rangle$ of a state vector
$z\in Z(M)\hotimes Z(N)$ and a state vector
$z' \in Z(N)\hotimes Z(P)$ satisfies the constraint equation and is thus a
state $\langle z,z' \rangle \in Z(M)\hotimes Z(P).$
Therefore, restriction defines an inner product
\[ \langle -,- \rangle: (Z(M)\hotimes Z(N))\times (Z(N)\hotimes Z(P)) 
  \longrightarrow Z(M)\hotimes Z(P). \]
\begin{thm} \label{thm.statesumgluing}
(Gluing Formula.)
Let $W'$ be a bordism from $M$ to $N$ and let $W''$ be a bordism from
$N$ to $P$. Let $W = W' \cup_N W''$ be the bordism from $M$ to $P$ obtained
by gluing $W'$ and $W''$ along $N$. Then the state sum of $W$ can be
calculated as the contraction inner product
\[ Z_W = \langle Z_{W'}, Z_{W''} \rangle \in Z(M)\hotimes Z(P)\cong Z(M\sqcup P). \]
\end{thm}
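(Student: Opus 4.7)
The plan is to evaluate both $Z_W$ and $\langle Z_{W'}, Z_{W''} \rangle$ on an arbitrary boundary field $f \in \Fa(\partial W) = \Fa(M \sqcup P)$, identified via (FDISJ) with the pair $(f|_M, f|_P)$, and reduce them to the same iterated sum. Unfolding the contraction $\gamma$ of Section \ref{sec.funsemimods} together with the product $\hotimes_c$ (which uses the multiplication $\cdot$ of $Q^c$), the target identity becomes
\[ Z_W(f) = \sum_{g \in \Fa(N)} Z_{W'}(f|_M, g) \cdot Z_{W''}(g, f|_P). \]

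First I would expand each factor on the right-hand side. By definition of the state sum, $Z_{W'}(f|_M, g) = \sum_{F' \in \Fa(W', (f|_M, g))} T_{W'}(F')$, and analogously for $Z_{W''}$. Using the infinite distributivity of $\cdot$ in the complete semiring $Q^c$, the product becomes
\[ \sum_{(F', F'')} T_{W'}(F') \cdot T_{W''}(F''), \]
indexed over $\Fa(W', (f|_M, g)) \times \Fa(W'', (g, f|_P))$. Now $T_{W'}(F')$ and $T_{W''}(F'')$ are characteristic functions of their respective action values. Evaluating the convolution formula that defines $\cdot$ on a morphism, the only possibly nonzero contribution comes from the outer factor $\beta = \TT_{W''}(F'')$ composed with the inner factor $\alpha = \TT_{W'}(F')$; hence $T_{W'}(F') \cdot T_{W''}(F'')$ is the characteristic function of $\TT_{W''}(F'') \circ \TT_{W'}(F')$. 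Since $F'$ and $F''$ agree on $N$ (both restrict to $g$), axiom (FGLUE) produces a unique field $F \in \Fa(W, f)$ with $F|_{W'} = F'$, $F|_{W''} = F''$, and axiom (TGLUE) gives $\TT_{W''}(F'') \circ \TT_{W'}(F') = \TT_W(F)$. Therefore $T_{W'}(F') \cdot T_{W''}(F'') = T_W(F)$ in $Q$.

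Next I would reindex the iterated sum. By Lemma \ref{lem.fgluebndryconds}, the assignment $F \mapsto (F|_{W'}, F|_{W''})$ furnishes a bijection
\[ \Fa(W, f) \;\cong\; \coprod_{g \in \Fa(N)} \Fa(W', (f|_M, g)) \times \Fa(W'', (g, f|_P)). \]
Invoking the partition axiom of the complete monoid $(Q, +, 0)$ along this bijection collapses the triple sum over $g$, $F'$, $F''$ into $\sum_{F \in \Fa(W, f)} T_W(F) = Z_W(f)$, establishing the identity.

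The principal subtlety is not computational but bookkeeping: one must track the reversal inherent in the convolution formula $(f \cdot g)(\gamma) = \sum_{\beta \circ \alpha = \gamma} g(\beta) f(\alpha)$ for $Q^c$. It is precisely this reversal that matches the order dictated by (TGLUE), with $\TT_{W''}$ appearing as the outer factor (hence as $g = T_{W''}(F'')$) and $\TT_{W'}$ as the inner factor (hence as $f = T_{W'}(F')$). Once the orientations agree, the gluing theorem reduces to the boundary-respecting field bijection of Lemma \ref{lem.fgluebndryconds} combined with infinite distributivity in the ground complete semiring.
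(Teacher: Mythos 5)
Your proposal is correct and follows essentially the same route as the paper: the pointwise identity $T_{W'}(F')\cdot T_{W''}(F'') = T_W(F)$ via the convolution formula for $Q^c$ and axiom (TGLUE), the boundary-condition-respecting bijection of Lemma \ref{lem.fgluebndryconds}, the partition of the pullback indexed by $g\in \Fa(N)$, and infinite distributivity in the complete semiring. The only difference is cosmetic --- you run the computation from the contraction toward $Z_W$ rather than the reverse.
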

\begin{proof}
Let $F \in \Fa (W)$ be a field on the glued bordism. Then, regarding
$Q_S (\catc)$ as the composition semiring $Q^c$, we have on a morphism
$\gamma: X\to Y$ of $\catc$,
\[ (T_{W'} (F|_{W'})\cdot T_{W''} (F|_{W''}))_{XY} (\gamma) =
 \sum_{\beta \circ \alpha= \gamma} T_{W''} (F|_{W''})_{ZY} (\beta)
  \cdot T_{W'} (F|_{W'})_{XZ} (\alpha). \]
(Here, $Z$ is of course an object of $\catc$ and not a state sum.)
The element $T_{W'} (F|_{W'})_{XZ} (\alpha)\in S$ is $1$ when $\alpha =
\TT_{W'} (F|_{W'})$ and $0$ otherwise. Similarly,
$T_{W''} (F|_{W''})_{ZY} (\beta)\in S$ is $1$ when $\beta =
\TT_{W''} (F|_{W''})$ and $0$ otherwise. Thus
\begin{eqnarray*}
(T_{W'} (F|_{W'})\cdot T_{W''} (F|_{W''}))_{XY} (\gamma) & = &
  \sum_{\TT_{W''} (F|_{W''})\circ \TT_{W'} (F|_{W'})=\gamma} 1\cdot 1 \\
& = & \begin{cases} 1,& \text{ if } \gamma = \TT_{W''} (F|_{W''})\circ \TT_{W'} (F|_{W'}) \\
  0,& \text{ otherwise.} \end{cases}
\end{eqnarray*}
Now by axiom (TGLUE), $\TT_{W''} (F|_{W''})\circ \TT_{W'} (F|_{W'}) =
\TT_{W} (F),$ which implies
\[ T_{W'} (F|_{W'})\cdot T_{W''} (F|_{W''}) =
  T_{W} (F) \]
in $Q^c$. Let $f\in \Fa (\partial W) = \Fa (M\sqcup P).$
Lemma \ref{lem.fgluebndryconds} asserts that the unique map $\rho$ such that
\[ \xymatrix{
\Fa (W,f) \ar@/^1pc/[rrd] \ar[rd]^{\rho} \ar@/_1pc/[ddr] & & \\
& \Fa (W',W'',f|_M, f|_P) \ar[r] \ar[d] & \Fa (W',f|_M) \ar[d] \\
& \Fa (W'',f|_P) \ar[r] & \Fa (N)
} \]
commutes, is a bijection. Therefore,
\[ \sum_{F\in \Fa (W,f)} T_{W'} (F|_{W'})\cdot T_{W''} (F|_{W''}) =
 \sum_{(G',G'') \in \Fa (W',W'',f|_M,f|_P)}
  T_{W'} (G')\cdot T_{W''} (G''). \]
The pullback $\Fa (W',W'',f|_M,f|_P)$ possesses the natural partition
\[ \Fa (W',W'',f|_M, f|_P) = \bigcup_{g\in \Fa (N)}
  \Fa (W',f|_M,g) \times \Fa (W'',g,f|_P), \]
which enables us to rewrite the sum over the pullback as
\[ \sum_{(G',G'') \in \Fa (W',W'',f|_M,f|_P)}
  T_{W'} (G')\cdot T_{W''} (G'') = \hspace{4cm} \]
\[ \hspace{2cm} = \sum_{g\in \Fa (N)}
   \sum_{~(G',G'') \in \Fa (W',f|_M,g) \times \Fa (W'',g,f|_P)}
  T_{W'} (G')\cdot T_{W''} (G'')  \]
according to the partition property that the summation law in a complete
monoid satisfies. Thus for the state sum of the glued bordism, subject
to the boundary condition $f$,
\begin{eqnarray*}
Z_W (f) & = & \sum_{F\in \Fa (W,f)} T_W (F) \\
& = & \sum_{F\in \Fa (W,f)} T_{W'} (F|_{W'})\cdot T_{W''} (F|_{W''}) \\
& = & \sum_{(G',G'') \in \Fa (W',W'',f|_M,f|_P)}
  T_{W'} (G')\cdot T_{W''} (G'') \\
& = & \sum_{g\in \Fa (N)}
   \sum_{~G' \in \Fa (W',f|_M,g)} \sum_{~G'' \in \Fa (W'',g,f|_P)}
  T_{W'} (G')\cdot T_{W''} (G'') \\
& = & \sum_{g\in \Fa (N)}
  \big( \sum_{G' \in \Fa (W',f|_M,g)} T_{W'} (G') \big)\cdot 
  \big( \sum_{G'' \in \Fa (W'',g,f|_P)} T_{W''} (G'') \big) \\
& = & \sum_{g\in \Fa (N)} Z_{W'} (f|_M \sqcup g)\cdot 
   Z_{W''} (g\sqcup f|_P) \\
& = & \sum_{g\in \Fa (N)} \beta^c (Z_{W'}, Z_{W''})
   (f|_M \sqcup g, g\sqcup f|_P) \\
& = & \gamma (Z_{W'} \hotimes_c Z_{W''})(f) \\
& = & \langle Z_{W'}, Z_{W''} \rangle (f).
\end{eqnarray*}
(Note that via the identification $\Fa (M \sqcup P) \cong \Fa (M)\times \Fa (P),$
we may also think of the boundary condition $f$ as the pair $(f|_M, f|_P)$.)
\end{proof}
Taking bordisms with $N=\varnothing$ is allowed in the above gluing theorem.
In such a situation, $W= W' \cup_{\varnothing} W'' = W' \sqcup W''$ so that
by using the gluing theorem in conjunction with Theorem \ref{thm.zdisjunion}
on disjoint unions, we obtain
$\langle Z_{W'}, Z_{W''} \rangle = Z_{W'} \hotimes_m Z_{W''}.$\\

As an application of our theorems, let us derive the well-known
zig-zag equation. Given a closed $n$-manifold, we write
$M_2 = M\sqcup M$ and $M_3 = M\sqcup M\sqcup M$. The cylinder $M\times [0,1]$ can be interpreted as
a bordism in three different ways:
\[ C=(M\times [0,1],M,M),~
   C_\subset =(M\times [0,1],\varnothing,M_2),~
   C_\supset =(M\times [0,1],M_2,\varnothing). \]
Using these, we can form the bordisms
\[ W' = C_\subset \sqcup C,~ W'' = C\sqcup C_\supset,~
  W = W' \cup_{M_3} W''. \]
Then $W$ is homeomorphic to $C$ by a homeomorphism $\phi$ which is the identity
on the boundary, $\phi_\partial = \id_{M_2}$. Thus, by topological invariance of the
state sum, $Z_W = Z_C$. On the other hand, using Theorems
\ref{thm.zdisjunion} and \ref{thm.statesumgluing}, 
\[ Z_W = \langle Z_{W'}, Z_{W''} \rangle =
 \langle Z_{C_\subset \sqcup C}, Z_{C\sqcup C_\supset} \rangle =
 \langle Z_{C_\subset} \hotimes_m Z_C, Z_C \hotimes_m Z_{C_\supset} \rangle. \]
Thus we arrive at the zig-zag equation
\[ \langle Z_{C_\subset} \hotimes_m Z_C, Z_C \hotimes_m Z_{C_\supset} \rangle
  = Z_C, \]
relating the state sums of the three bordisms associated to $M\times [0,1]$. \\

Finally, we introduce a new kind of invariant $\gA (M)\in Z(M)$,
the \emph{coboundary aggregate} of a closed $n$-manifold $M$,
which has no counterpart in classical topological field theories.
Call two bordisms $(W_1, \varnothing, M)$ and $(W_2,\varnothing, M)$
equivalent, if there exists
a homeomorphism $W_1 \cong W_2$ whose restriction to the
boundary is the identity on $M$. (This is indeed an equivalence relation.)
Let $\Cob (M)$ be the collection of all equivalence classes of bordisms
(``$M$-coboundaries'') $(W,\varnothing, M)$.
If $W_1$ and $W_2$ are equivalent, then Theorem \ref{thm.statetopinv}
implies that they have equal state sums, $Z_{W_1} = Z_{W_2} \in Z(M)$.
Therefore, the state sum can be viewed as a well-defined function
$Z: \Cob (M)\to Z(M)$.
\begin{defn}
The \emph{coboundary aggregate} $\gA (M)\in Z(M)$ of a closed
topological $n$-manifold is the state vector
\[ \gA (M) = \sum_{[W]\in \Cob (M)} Z_{[W]}. \]
\end{defn}
Note that this is a well-defined element of $Z(M)$, since the completeness of $Q$
together with Proposition \ref{prop.funsemimod} implies that
$Z(M)$ is complete as well.
\begin{thm} \label{thm.aggregatetopinv}
If $\phi: M\to N$ is a homeomorphism, then $\phi_\ast \gA (M) = \gA (N)$.
That is, the coboundary aggregate is a topological invariant.
\end{thm}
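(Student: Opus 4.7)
The plan is to show that $\phi$ induces a bijection $\phi_\#: \Cob(M) \to \Cob(N)$ between coboundary classes, and then to transport the sum defining $\gA(M)$ to $N$ via the topological invariance of the state sum.

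First, given a coboundary $(W, \varnothing, M)$, I will construct its image $\phi_\#(W,\varnothing,M)$ by relabeling the boundary via $\phi$. Concretely, form the adjunction space $W_\phi := (W \sqcup N)/\!\sim$, where $x \in M = \partial W$ is identified with $\phi(x) \in N$. This is a compact topological $(n+1)$-manifold with $\partial W_\phi = N$, and the composition of the inclusion $W \hookrightarrow W \sqcup N$ with the quotient map yields a homeomorphism $\Phi: W \to W_\phi$ with $\Phi|_{\partial W} = \phi$. Thus $\Phi$ is a homeomorphism of bordisms $(W,\varnothing,M) \to (W_\phi,\varnothing,N)$ whose restriction to the boundary is precisely $\phi$. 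Setting $\phi_\#([W]) := [W_\phi]$, well-definedness follows from the observation that if $\psi: W_1 \to W_2$ is an equivalence of $M$-coboundaries (so $\psi|_M = \id_M$), then $\Phi_2 \circ \psi \circ \Phi_1^{-1}: W_{1,\phi} \to W_{2,\phi}$ restricts on the boundary to $\phi \circ \id_M \circ \phi^{-1} = \id_N$. The map $(\phi^{-1})_\#$ is a two-sided inverse, so $\phi_\#$ is a bijection.

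Next, Theorem \ref{thm.statetopinv} applied to the bordism homeomorphism $\Phi$ gives $\phi_\ast(Z_W) = \Phi_{\partial \ast}(Z_W) = Z_{W_\phi}$, i.e.\ $\phi_\ast Z_{[W]} = Z_{\phi_\#([W])}$ for every $[W] \in \Cob(M)$.

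Finally, since the summation law on $\fun_Q(\ovf(M))$ is defined pointwise (Proposition \ref{prop.funsemimod}) and $\phi_\ast$ acts as precomposition with the bijection $\phi^\ast$ on fields, the map $\phi_\ast$ commutes with arbitrary sums. Combining this with the reindexing property of summation laws along the bijection $\phi_\#$ (a consequence of the partition axiom applied to the singleton partition indexed by $\Cob(N)$), I obtain
\[ \phi_\ast \gA(M) = \phi_\ast \sum_{[W] \in \Cob(M)} Z_{[W]} = \sum_{[W] \in \Cob(M)} Z_{\phi_\#([W])} = \sum_{[W'] \in \Cob(N)} Z_{[W']} = \gA(N). \]
The main technical point is the construction and verification of $\phi_\#$: one has to check that $W_\phi$ is genuinely a manifold with boundary canonically identified with $N$, which follows from the fact that $\phi$ is a boundary homeomorphism so gluing along it preserves the manifold structure, and that $\Phi$ is a bordism homeomorphism in the sense of the excerpt. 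Once $\phi_\#$ is established, the remainder is a direct application of topological invariance together with the basic properties of complete monoids.
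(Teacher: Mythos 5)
Your proof is correct, and it reaches the same final reindexing computation as the paper, but the bijection $\Cob(M)\to\Cob(N)$ is constructed differently. The paper sends $(W,\varnothing,M)$ to the bordism $W\cup_{\phi}(N\times[0,1])$ obtained by attaching a mapping cylinder along $\phi$, and then has to invoke Brown's collar neighborhood theorem twice: once to produce a homeomorphism $W\cong\psi(W)$ of bordisms restricting to $\phi$ on the boundary (so that topological invariance gives $\phi_\ast Z_W=Z_{\psi(W)}$), and once more to verify that $\psi'\circ\psi$ is the identity on $\Cob(M)$, i.e.\ that re-attaching the cylinder for $\phi^{-1}$ returns a coboundary equivalent to $W$ rel $M$. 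Your adjunction space $W_\phi=(W\sqcup N)/\!\sim$ is a pure boundary relabeling: the canonical map $W\to W_\phi$ is a continuous bijection from a compact space to a Hausdorff quotient, hence a homeomorphism, and it visibly restricts to $\phi$ on the boundary, so no collar is needed either for the topological-invariance step or for checking that $(\phi^{-1})_\#$ inverts $\phi_\#$ (the composite of the two boundary homeomorphisms is $\phi^{-1}\circ\phi=\id_M$ on the nose). This makes your argument slightly more elementary and self-contained; the only point worth making explicit, which you flag correctly, is that $W_\phi$ is Hausdorff (quotient of a compact Hausdorff space by a closed relation) and that the embedded copy of $N$ is identified with $\partial W_\phi$ --- the same harmless set-theoretic identification the paper makes when it treats $N\times\{1\}$ as $N$. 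The remaining steps (well-definedness on equivalence classes, $\phi_\ast Z_{[W]}=Z_{\phi_\#[W]}$ via Theorem \ref{thm.statetopinv}, pointwise summation and reindexing along the bijection) match the paper's argument.
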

\begin{proof}
Given a bordism $W$ with $\partial W^{\operatorname{in}} =\varnothing$ and
$\partial W^{\operatorname{out}}=M,$ let 
\[ \psi (W) = W \cup_{\phi} N\times [0,1], \]
that is, let $\psi (W)$ be the mapping cylinder of $\phi^{-1}$ followed by
the inclusion $M\subset W$. Then $\psi (W)$ is a bordism
with $\partial \psi (W)^{\operatorname{in}} =\varnothing$ and
$\partial \psi (W)^{\operatorname{out}}=N.$
Suppose that $W_1$ and $W_2$ represent
the same element of $\Cob (M)$. Then there exists a homeomorphism
$\Phi: W_1 \to W_2$ whose restriction to the boundary is the identity on $M$.
The pushout of the top row of the commutative diagram
\[ \xymatrix@C=40pt{
W_1 \ar[d]_{\Phi}^{\cong} & M \ar@{^{(}->}[l] \ar@{^{(}->}[r]^{(\phi,0)} \ar@{=}[d] &
  N\times [0,1] \ar@{=}[d] \\
W_2 & M \ar@{^{(}->}[l] \ar@{^{(}->}[r]^{(\phi,0)} & N\times [0,1] 
} \]
is $\psi (W_1)$, while the pushout of the bottom row is $\psi (W_2)$.
Thus the universal property of pushouts applied to the above diagram
yields a homeomorphism $\psi (W_1) \cong \psi (W_2),$ whose restriction
to the boundary is the identity on $N$. Hence $\psi (W_1)$ and $\psi (W_2)$
are equivalent and represent the same element of $\Cob (N)$.
Consequently, $\psi$ induces a well-defined map
$\psi: \Cob (M)\to \Cob (N)$. Reversing the roles of $M$ and $N$
(using $\phi^{-1}$), we also
obtain a map $\psi': \Cob (N)\to \Cob (M)$. We claim that $\psi$ and
$\psi'$ are inverse to each other:
The colimit of the top row of the commutative diagram
\[ \xymatrix@C=40pt{
W \ar@{=}[d] & M \ar@{^{(}->}[l] \ar@{^{(}->}[r]^{(\id_M,0)} \ar@{=}[d] &
  M\times [0,1] \ar[d]^{\phi \times \id_{[0,1]}}_{\cong} 
 & M \ar@{^{(}->}[l]_{\operatorname{at } 1}  
  \ar@{^{(}->}[r]^{(\id_M,0)} 
 \ar[d]^{\phi}_{\cong} & M\times [0,1] \ar@{=}[d] \\
W & M \ar@{^{(}->}[l] \ar@{^{(}->}[r]^{(\phi,0)} & N\times [0,1]
& N \ar@{^{(}->}[l]_{\operatorname{at } 1} 
  \ar@{^{(}->}[r]^{(\phi^{-1},0)} & M\times [0,1] 
} \]
is homeomorphic to $W$ (using 
the collar neighborhood theorem \cite{browntopcollars}) via a
homeomorphism which is the identity on the boundary,
while the colimit of the bottom row is $\psi' (\psi (W))$.
Thus the universal property of colimits applied to the above diagram
yields a homeomorphism $W \cong \psi' (\psi (W)),$ whose restriction
to the boundary is the identity. We conclude that
$[W] = \psi' \psi [W] \in \Cob (M)$. By symmetry we also have
$[W'] = \psi \psi' [W'] \in \Cob (N)$. This shows that $\psi' = \psi^{-1}$
and $\psi$ is a bijection.

Given a field $f\in \Fa (N),$ let us show that
\begin{equation} \label{equ.zwphifiszpsiwf}
Z_W (\phi^\ast f) = Z_{\psi (W)} (f). 
\end{equation}
The pushout of the top row of the commutative diagram
\[ \xymatrix@C=40pt{
W \ar@{=}[d] & M \ar@{^{(}->}[l] \ar@{^{(}->}[r]^{(\id_M,0)} \ar@{=}[d] &
  M\times [0,1] \ar[d]^{\phi \times \id_{[0,1]}}_{\cong} \\
W & M \ar@{^{(}->}[l] \ar@{^{(}->}[r]^{(\phi,0)} & N\times [0,1] 
} \]
is homeomorphic to $W$ (using the collar neighborhood theorem) via a
homeomorphism which is the identity on the boundary,
while the pushout of the bottom row is $\psi (W)$.
Thus the universal property of pushouts applied to the above diagram
yields a homeomorphism of bordisms $W \cong \psi (W),$ whose restriction
to the boundary is $\phi$. By topological invariance of the state sum
(Theorem \ref{thm.statetopinv}), $\phi_\ast (Z_W) = Z_{\psi (W)}$, proving
equation (\ref{equ.zwphifiszpsiwf}). Consequently,
\begin{eqnarray*}
(\phi_\ast \gA (M))(f) & = & \gA (M)(\phi^\ast f) =
  \sum_{[W]\in \Cob (M)} Z_{[W]} (\phi^\ast f) \\
& = & \sum_{[W]\in \Cob (M)} Z_{\psi [W]} (f) =
  \sum_{[W'] \in \psi(\Cob (M))} Z_{[W']} (f) \\
& = & \gA (N)(f).
\end{eqnarray*}
\end{proof}

\section{The Frobenius Structure}
\label{sec.frobenius}

We shall show that the state modules $Z(M)$, in any dimension $n$, 
come naturally equipped with the structure of a Frobenius semialgebra.
Let $S$ be any semiring, not necessarily commutative.

\begin{defn}
A \emph{Frobenius semialgebra} over $S$ is a two-sided $S$-semialgebra $A$
together with an $S$-bisemimodule homomorphism
$\epsilon: A\to S,$ called the \emph{counit} functional (or sometimes \emph{trace}), such that the
$S_S S$-linear form
\[ A\times A\to S,~ (a,b)\mapsto \epsilon (a\cdot b) \]
is nondegenerate. A \emph{morphism $\phi: A\to B$ of Frobenius
$S$-semialgebras} is a morphism of two-sided $S$-semialgebras which
commutes with the counits, i.e. $\epsilon_A = \epsilon_B \circ \phi$.
\end{defn}

\begin{prop}
The state module $Z(M)$ of a closed, $n$-dimensional manifold $M$
becomes a Frobenius semialgebra over $Q^m$ and over $Q^c$ when
endowed with the counit functional
\[ \epsilon = \epsilon_M: Z(M)\to Q,~ \epsilon (z) = \sum_{f\in \Fa (M)} z(f). \]
\end{prop}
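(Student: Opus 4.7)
My plan is to verify in turn the three constituents of the Frobenius structure --- well-definedness of $\epsilon_M$, the bisemimodule-homomorphism property, and nondegeneracy of the associated pairing --- separately for each of the two multiplicative structures $Q^c$ and $Q^m$ on the common additive monoid $Q$. First I would record that $\epsilon_M(z) = \sum_{f \in \Fa(M)} z(f)$ is a well-defined element of $Q$: since $Q$ is a complete semiring (Proposition~\ref{prop.qsccompletemonoid}) and $z \colon \Fa(M) \to Q$, the summation law delivers this sum even when $\Fa(M)$ is uncountable. Additivity of $\epsilon_M$ is the partition axiom, and for homogeneity the infinite distributivity in the ambient complete semiring gives
\[ \epsilon_M(q \cdot z) = \sum_{f \in \Fa(M)} q \cdot z(f) = q \cdot \epsilon_M(z) \]
for every $q \in Q$, together with the symmetric right-sided identity; the identical argument with $\times$ in place of $\cdot$ handles the $Q^m$-structure.

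The substantive step is nondegeneracy. My plan is to probe the pairing $(z,z') \mapsto \epsilon_M(z \cdot z')$ against the characteristic function $\chi_{[g]} \in Z(M)$ of an equivalence class $[g] \in \ovf(M)$, taking the value $1$ (the multiplicative unit of $Q^c$) on $[g]$ and $0$ elsewhere; this function lies in $Z(M)$ because equivalence classes are preserved by any homeomorphism isotopic to the identity. Pointwise multiplication then collapses $z \cdot \chi_{[g]}$ to $z$ on $[g]$ and to $0$ outside, so
\[ \epsilon_M(z \cdot \chi_{[g]}) = \sum_{f \in [g]} z(f) = \sum_{f \in [g]} z(g), \]
using that $z$ is constant on $[g]$. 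Assuming this vanishes for every $[g]$, the partition axiom applied to the nonempty decomposition $[g] = \{g\} \cup ([g] \setminus \{g\})$ yields a two-term sum $z(g) + (\text{something}) = 0$, and zerosumfreeness of a complete semiring then forces $z(g) = 0$; as $[g]$ was arbitrary, $z = 0$. The symmetric argument gives nondegeneracy in the other slot, and substituting $1^\times$ for $1$ in the probes $\chi_{[g]}$ transports the entire argument verbatim to the $Q^m$-structure.

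The main obstacle I anticipate is conceptual rather than computational: the usual field-theoretic route to nondegeneracy via a dual basis or an adjoint is unavailable in the absence of additive inverses. The resolution hinges on the structural fact --- emphasized in Section~\ref{sec.monssemirings} --- that every complete semiring is automatically zerosumfree, used in tandem with probes $\chi_{[g]}$ supported on a single isotopy class. Once this device is in place, every remaining verification reduces to a direct application of the distributivity and partition axioms of a complete semiring.
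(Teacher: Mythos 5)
Your proposal is correct and follows essentially the same route as the paper: the nondegeneracy is established by pairing against the characteristic function of a single equivalence class in $\ovf(M)$ (with value $1$ resp.\ $1^\times$), reducing to the fact that a sum of copies of a nonzero element over a nonempty index set cannot vanish in a zerosumfree complete semiring. The only cosmetic difference is that you phrase this contrapositively and spell out the routine bisemimodule verifications that the paper declares ``clear.''
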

\begin{proof}
The functional is clearly a $Q^c$-bisemimodule homomorphism and
a $Q^m$-bisemimodule homomorphism.
To show that $Z(M)$ is indeed Frobenius over the monoidal semiring  
$Q^m =(Q_S (\catc), +, \times, 0, 1^\times)$,
we shall, to a given nonzero $z\in Z(M),$ construct a nonzero
$z'\in Z(M)$ such that $\epsilon (z\times z')$ is not zero.
If $z\not =0$, then there exists a class $[f_0] \in \ovf (M)$,
represented by a field $f_0$, such that
$z[f_0]\not= 0$. Setting
\[ z' [f] = \begin{cases} 1^\times, & \text{ if } [f]=[f_0] \\
0, & \text{ otherwise,} \end{cases} \]
we obtain a state $z'\in Z(M)$. Using the partition
$\Fa (M) = \bigcup_{c\in \ovf (M)} c,$ the trace of the product is
\begin{eqnarray*}
\epsilon (z\times z') & = & \sum_{f\in \Fa (M)} z(f)\times z' (f) =
 \sum_{c\in \ovf (M)} \sum_{f\in c} z(f)\times z'(f) \\
& = & \sum_{f\in [f_0]} z(f)\times z'(f) = \sum_{f\in [f_0]} z(f)\times z'(f_0) \\
& = & \sum_{f\in [f_0]} z(f) = \sum_{f\in [f_0]} z(f_0).
\end{eqnarray*}
Now if $R$ is a complete semiring, $r\in R$ a nonzero element and
$J$ an arbitrary nonempty index set, then $\sum_{j\in J} r$ cannot be zero.
For if it were zero, then
\[ 0 = \sum_{j\in J} r = r + \sum_{j\in J - \{ j_0 \}} r. \]
But as $R$ is complete, it is in particular zerosumfree, which would imply $r=0$,
a contradiction. Thus, since $z(f_0)\not=0$, we have
$\sum_{f\in [f_0]} z(f_0)\not= 0$.
Similarly, to prove that $Z(M)$ is Frobenius over the
composition semiring $Q^c =(Q_S (\catc), +, \cdot, 0, 1)$,
take 
\[ z' [f] = \begin{cases} 1, & \text{ if } [f]=[f_0] \\
0, & \text{ otherwise.} \end{cases} \]
\end{proof}
\begin{remark}
As every state is constant over classes $[f]\in \ovf (M),$ it might at first
seem more natural to construct the counit functional by summing over classes,
rather than individual fields. However, summing over classes would invalidate
Proposition \ref{prop.contreps}, and thus would be less compatible with
the existing formalism.
\end{remark}

More generally, we can also define counits of the form
\[ \epsilon_{M,-}: Z(M) \hotimes Z(N) \longrightarrow
  Z(N) \]
on the tensor product of the state modules of two closed $n$-manifolds
$M$ and $N$ by
\[ \epsilon_{M,-}(z)(g) = \sum_{f\in \Fa (M)} z(f,g), \]
$z: \ovf (M)\times \ovf (N)\to Q,$ $g\in \Fa (N)$; similarly for
$\epsilon_{-,N}$.
The next proposition
shows that the Frobenius counit interacts multiplicatively with the
tensor product of vectors.
\begin{prop} \label{prop.epstensmult}
Given two vectors $z\in Z(M),$ $z' \in Z(N),$ we have the formula
\[ \epsilon_{M\sqcup N} (z\hotimes_c z') = \epsilon_M (z)\cdot
  \epsilon_N (z') \]
in the composition semiring $Q^c$, and
\[ \epsilon_{M\sqcup N} (z\hotimes_m z') = \epsilon_M (z)\times
  \epsilon_N (z') \]
in the monoidal semiring $Q^m$.
\end{prop}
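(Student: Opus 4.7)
The plan is a direct unwinding of the definitions combined with the Fubini-type identity (\ref{equ.sumijmonoid}) and the infinite distributivity requirement of a complete semiring. First, I would unpack the element $z\hotimes_c z' \in Z(M)\hotimes Z(N) \cong Z(M\sqcup N)$: viewed as a $Q$-valued function on $\Fa(M\sqcup N)$, by the definition of $\beta^c$ (together with the bijection $\Fa(M\sqcup N) \stackrel{\cong}{\to} \Fa(M)\times \Fa(N)$ from axiom (FDISJ)) one has $(z\hotimes_c z')(h) = z(h|_M)\cdot z'(h|_N)$ for every $h\in \Fa(M\sqcup N)$. By Proposition \ref{prop.zmndisjtensdecomp} this is the correct description of the state under the canonical identification, and analogously $(z\hotimes_m z')(h) = z(h|_M)\times z'(h|_N)$.

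Next I would reindex the sum defining $\epsilon_{M\sqcup N}(z\hotimes_c z')$ along the bijection $\Fa(M\sqcup N) \cong \Fa(M)\times \Fa(N)$. Since a bijective reindexing preserves sums in a complete monoid, this yields
\[ \epsilon_{M\sqcup N}(z\hotimes_c z') = \sum_{(f,g)\in \Fa(M)\times \Fa(N)} z(f)\cdot z'(g). \]
Applying the Fubini-type identity (\ref{equ.sumijmonoid}) with respect to the two canonical partitions of $\Fa(M)\times \Fa(N)$, I can rewrite the right-hand side as the iterated sum $\sum_{f\in \Fa(M)} \sum_{g\in \Fa(N)} z(f)\cdot z'(g)$.

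Finally, I would invoke the infinite distributivity axiom of the complete semiring $Q^c$ twice: first, holding $f$ fixed, to pull $z(f)$ out of the inner sum, obtaining $z(f)\cdot \epsilon_N(z')$; then, to pull $\epsilon_N(z')$ out of the outer sum. This produces $\bigl(\sum_{f\in \Fa(M)} z(f)\bigr)\cdot \epsilon_N(z') = \epsilon_M(z)\cdot \epsilon_N(z')$, establishing the first identity. The second identity is proved verbatim with $\cdot$ replaced by $\times$, using the infinite distributivity of $Q^m$ and the formula $(z\hotimes_m z')(h) = z(h|_M)\times z'(h|_N)$.

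There is no substantial obstacle here; the only point that merits care is making explicit the three-fold identification $Z(M\sqcup N) \cong Z(M)\hotimes Z(N) \cong \fun_Q(\ovf(M)\times \ovf(N)) \hookrightarrow \fun_Q(\Fa(M)\times \Fa(N))$ and verifying that $z\hotimes_c z'$ really corresponds to the pointwise product $(f,g)\mapsto z(f)\cdot z'(g)$ on the level of $\Fa(M)\times \Fa(N)$, so that the Fubini and distributivity steps apply without modification.
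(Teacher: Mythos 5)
Your proposal is correct and follows essentially the same route as the paper's proof: unwind $(z\hotimes_c z')(h) = \beta^c(z,z')(h|_M,h|_N)$, reindex the sum over $\Fa(M\sqcup N)$ along the (FDISJ) bijection with $\Fa(M)\times\Fa(N)$, and then factor the double sum using the Fubini-type identity and infinite distributivity of the complete semiring (this factorization of $\sum_{(i,j)} s_i t_j$ is exactly the identity recorded at the end of Section~\ref{sec.monssemirings}). The $\times$ case is handled identically in both arguments.
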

\begin{proof}
Viewing $Q$ as $Q^c$,
\begin{eqnarray*}
\epsilon_{M\sqcup N} (z\hotimes_c z') & = &
  \sum_{f\in \Fa (M\sqcup N)} (z\hotimes_c z')(f) =
 \sum_{f\in \Fa (M\sqcup N)} \beta^c (z,z')(f|_M, f|_N) \\
& = & \sum_{(g,g')\in \Fa (M)\times \Fa (N)} \beta^c (z,z')(g,g') =
  \sum_{g\in \Fa (M)} \sum_{g' \in \Fa (N)} z(g)\cdot z' (g') \\
& = & \left\{ \sum_{g\in \Fa (M)} z(g) \right\} \cdot \left\{
  \sum_{g'\in \Fa (N)} z' (g') \right\} = \epsilon_M (z)\cdot \epsilon_N (z'),
\end{eqnarray*}
using axiom (FDISJ) for systems of fields. If one replaces $\cdot$ by $\times$
and $c$ by $m$ in this calculation, one obtains the corresponding formula
in $Q^m$.
\end{proof}
Let $W\sqcup W'$ be a disjoint union of two bordisms
$W,W'$. By Theorem \ref{thm.zdisjunion},
the state sum of the disjoint union decomposes as
$Z_{W\sqcup W'} = Z_W \hotimes_m Z_{W'}$ and thus by the above Proposition
\[ \epsilon_{\partial (W\sqcup W')} (Z_{W\sqcup W'}) =
  \epsilon_{\partial W} (Z_W)\times \epsilon_{\partial W'} (Z_{W'}). \]

Given three $n$-manifolds $M,N,P$ and two vectors
$z\in Z(M)\hotimes Z(N),$ $z' \in Z(N)\hotimes Z(P),$ we may form their
contraction product $\langle z,z' \rangle \in Z(M)\hotimes Z(P)$.
The following proposition expresses the counit image of 
$\langle z,z' \rangle$ in terms of the counits
$\epsilon_{M,-}: Z(M)\hotimes Z(N)\to Z(N),$
$\epsilon_{-,P}: Z(N)\hotimes Z(P)\to Z(N)$ and
$\epsilon_N: Z(N)\to Q.$
\begin{prop} \label{prop.contreps}
Given vectors $z\in Z(M)\hotimes Z(N),$ $z' \in Z(N)\hotimes Z(P),$
the identity
\[ \epsilon_{M\sqcup P} \langle z,z' \rangle = \epsilon_N
(\epsilon_{M,-}(z)\cdot \epsilon_{-,P}(z')) \]
holds in $Q$, using on the right hand side the semiring-multiplication on $Z(N)$.
\end{prop}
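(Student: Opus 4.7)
The plan is to expand both sides of the claimed identity purely from the definitions and then observe that the resulting double (respectively triple) sums coincide by the partition axiom of the complete monoid $(Q,+,0)$ together with the infinite distributivity of the composition semiring $Q^c$.

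First, I would compute the left hand side. By axiom (FDISJ), $\Fa(M\sqcup P)$ is in bijection with $\Fa(M)\times \Fa(P)$ via the product of restrictions, so the summation in $\epsilon_{M\sqcup P}$ may be indexed by such pairs $(f,h)$. By the definition of $\langle z,z'\rangle$ as the contraction $\gamma(z\hotimes_c z')$, one has
\[ \langle z,z'\rangle(f,h)=\sum_{g\in \Fa(N)} z(f,g)\cdot z'(g,h), \]
with $\cdot$ denoting the multiplication of $Q^c$. Hence
\[ \epsilon_{M\sqcup P}\langle z,z'\rangle \;=\; \sum_{(f,h)\in \Fa(M)\times \Fa(P)}\;\sum_{g\in \Fa(N)} z(f,g)\cdot z'(g,h). \]

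Next I would compute the right hand side. By definition, $\epsilon_{M,-}(z)$ is the function on $\Fa(N)$ sending $g\mapsto \sum_{f\in \Fa(M)} z(f,g)$, and similarly $\epsilon_{-,P}(z')(g)=\sum_{h\in \Fa(P)} z'(g,h)$. Since the multiplication on $Z(N)\subset \fun_Q(\Fa(N))$ is pointwise (Proposition \ref{prop.funsemimod}),
\[ \bigl(\epsilon_{M,-}(z)\cdot \epsilon_{-,P}(z')\bigr)(g) \;=\; \Bigl(\sum_{f\in \Fa(M)} z(f,g)\Bigr)\cdot\Bigl(\sum_{h\in \Fa(P)} z'(g,h)\Bigr). \]
Applying the infinite distributivity requirement of the complete semiring $Q^c$ twice (once on each factor) converts this product of sums into a double sum indexed by $\Fa(M)\times \Fa(P)$. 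Summing over $g$ via $\epsilon_N$ therefore yields
\[ \epsilon_N\bigl(\epsilon_{M,-}(z)\cdot \epsilon_{-,P}(z')\bigr) \;=\; \sum_{g\in \Fa(N)}\;\sum_{(f,h)\in \Fa(M)\times \Fa(P)} z(f,g)\cdot z'(g,h). \]

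The final step is to observe that the two iterated sums agree. The index set $\Fa(M)\times \Fa(N)\times \Fa(P)$ admits the two natural partitions by fixing either the $\Fa(N)$-coordinate or the $(\Fa(M)\times \Fa(P))$-coordinates; by the partition axiom for the complete monoid $(Q,+,0)$ (i.e.\ the Fubini-type identity (\ref{equ.sumijmonoid})), both iterated sums equal the single sum
\[ \sum_{(f,g,h)\in \Fa(M)\times \Fa(N)\times \Fa(P)} z(f,g)\cdot z'(g,h), \]
proving the equality. There is no genuine obstacle here: the only point requiring any care is bookkeeping, namely ensuring that at each step the multiplication being used is that of $Q^c$ (as dictated by the definition of the contraction via $\hotimes_c$), so that the infinite distributivity and Fubini manipulations are all internal to the composition semiring structure.
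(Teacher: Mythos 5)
Your proposal is correct and follows essentially the same route as the paper's proof: unfold the contraction and the counits, convert the sum over $\Fa(M\sqcup P)$ to one over $\Fa(M)\times\Fa(P)$ via (FDISJ), and match the two sides using infinite distributivity in $Q^c$ together with the Fubini/partition property of the complete summation law. The only difference is presentational (you expand both sides and meet at the common triple sum, whereas the paper runs a single chain of equalities from left to right), so there is nothing substantive to add.
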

\begin{proof}
The desired formula is established by the calculation
\begin{eqnarray*}
\epsilon_{M\sqcup P} \langle z,z' \rangle & = &
 \epsilon_{M\sqcup P} \gamma (z\hotimes_c z') =
 \sum_{f\in \Fa (M\sqcup P)} \gamma (\beta^c (z,z'))(f|_M, f|_P) \\
& = & \sum_{f\in \Fa (M)} \sum_{h\in \Fa (P)}
  \gamma (\beta^c (z,z'))(f,h) \\
& = & \sum_{f\in \Fa (M)} \sum_{h\in \Fa (P)} \sum_{g\in \Fa (N)}
  \beta^c (z,z')(f,g,g,h) \\
& = & \sum_{g\in \Fa (N)} \sum_{f\in \Fa (M)} \sum_{h\in \Fa (P)}
  z(f,g)\cdot z'(g,h) \\
& = & \sum_{g\in \Fa (N)} \left\{ \sum_{f\in \Fa (M)} z(f,g) \right\} \cdot
 \left\{ \sum_{h\in \Fa (P)} z'(g,h) \right\} \\
& = & \sum_{g\in \Fa (N)} \epsilon_{M,-}(z)(g)\cdot
   \epsilon_{-,P}(z')(g) \\
& = & \sum_{g\in \Fa (N)} (\epsilon_{M,-}(z)\cdot
   \epsilon_{-,P}(z'))(g) \\
& = &  \epsilon_N
(\epsilon_{M,-}(z)\cdot \epsilon_{-,P}(z')).
\end{eqnarray*}
\end{proof}
Let $W'$ be a bordism from $M$ to $N$ and let $W''$ be a bordism from
$N$ to $P$. By Theorem \ref{thm.statesumgluing}, the state sum $Z_W$
of the bordism $W=W' \cup_N W''$ obtained by gluing $W'$ and $W''$
along $N$ is given by the contraction product
$Z_W = \langle Z_{W'}, Z_{W''} \rangle$. Thus by Proposition
\ref{prop.contreps},
\[ \epsilon_{M\sqcup P} (Z_W) = \epsilon_N
(\epsilon_{M,-}(Z_{W'})\cdot \epsilon_{-,P}(Z_{W''})). \]

A homeomorphism $\phi: M\to N$ induces covariantly an isomorphism
$\phi_\ast: Z(M) \to Z(N)$
of both two-sided $Q^c$-semialgebras and $Q^m$-semialgebras.
This is even an isomorphism of Frobenius semialgebras, as the calculation
\begin{eqnarray*}
\epsilon_N \phi_\ast (z) & = &
\sum_{g\in \Fa (N)} \phi_\ast (z)(g) = \sum_{g\in \Fa (N)} \fun_Q (\phi^\ast)(z)(g) \\
& = & \sum_{g\in \Fa (N)} z(\phi^\ast g) = \sum_{f\in \Fa (M)} z(f) = 
 \epsilon_M (z),
\end{eqnarray*}
$z\in Z(M),$ shows, where we have used the bijection
$\phi^\ast: \Fa (N)\to \Fa (M)$ provided by axiom (FHOMEO).
If $\phi:W\to W'$ is a homeomorphism of bordisms with restriction
$\phi_\partial:\partial W \to \partial W',$ then by Theorem \ref{thm.statetopinv},
\[ \epsilon_{\partial W'} (Z_{W'}) = \epsilon_{\partial W'}(\phi_{\partial \ast} Z_W)
  = \epsilon_{\partial W} (Z_W). \]
Thus state sums of homeomorphic bordisms have equal Frobenius traces.

\section{Linear Representations}
\label{sec.linearreps}

Let $\vect$ denote the category of vector spaces over some fixed field, with morphisms the
linear maps. While category-valued systems $\TT$ of action exponentials, as
formulated in Definition \ref{def.actions}, provide a lot of flexibility, one is often
ultimately interested in linear categories, as those are thoroughly understood and
possess a rich, well developed theory of associated invariants. Thus in practice, the 
process of constructing a useful positive TFT will consist of two steps: First, find
a (small) strict monoidal category $\catc$ and a system of fields that possesses an
interesting system of action exponentials into $\catc$. The morphisms of $\catc$ may
still be geometric or topological objects associated to the fields in a monoidal way.
In the second step, construct a linear representation of $\catc$, that is,
construct a monoidal functor $R: \catc \to \vect$. This converts the morphisms of 
$\catc$ into linear maps, which can then be analyzed using tools from linear algebra.
From this perspective, the category $\catc$ plays an intermediate role in the
construction of a TFT: it ought to be large enough to be able to record interesting
information of the fields on a manifold, but small enough so as to allow for
manageable linear representations. We will verify in this section
(Proposition \ref{prop.lvectvalactexp}) that the composition of the $\catc$-valued action exponentials
with the representation $R$ yields a system of $\vect$-valued action exponentials,
which then have their associated positive TFT $Z$, provided that $\vect$ is
endowed with the structure of a strict monoidal category. At this point, we face
a formal problem: The ordinary tensor product of vector spaces is not
strictly associative and the unit is not strict either. This problem can be solved
by endowing $\vect$, without changing its objects and morphisms, with a strict (symmetric)
monoidal structure, which is monoidally equivalent to the usual monoidal structure
on $\vect$.

\subsection{The Schauenburg Tensor Product}

The ordinary tensor product of vector spaces is well-known
not to be associative, though it is associative up to natural isomorphism. Thus,
if we endowed $\vect$ with the ordinary tensor product
and took the unit object $I$ to be the one-dimensional
vector space given by the ground field, then, using obvious associators and unitors,
$\vect$ would become a monoidal category, but not a strict one.
There is an abstract process of turning a monoidal category $\catc$ into 
a monoidally equivalent strict monoidal category $\catc^{\operatorname{str}}$. 
However, this process
changes the category considerably and is thus not always practical.
Instead, we base our monoidal structure on the Schauenburg tensor product
$\odot$ introduced in \cite{schauenburg}, which does not change the
category $\vect$ at all. 
The product $\odot$ satisfies the strict associativity
\[ (U\odot V)\odot W = U\odot (V\odot W). \]
We shall thus simply write $U\odot V \odot W$ for this vector space.
The unit object $I$ remains the same as in the usual nonstrict monoidal structure,
and one has
\[  V\odot I = V,~ I\odot V =V. \]
The strict
monoidal category $(\vect, \odot, I)$ thus obtained is monoidally equivalent to the usual nonstrict
monoidal category of vector spaces. The underlying functor of this monoidal
equivalence is the identity.
In particular, there is a natural isomorphism $\xi: \otimes \to \odot,$
$\xi_{VW}: V\otimes W \to V\odot W$, where $\otimes$ denotes the standard
tensor product of vector spaces.
Naturality means that for every pair of linear
maps $f: V\to V'$, $g:W\to W',$ the square
\[ \xymatrix{
V\otimes W \ar[r]^{\xi_{VW}}_{\cong} \ar[d]_{f\otimes g} &
  V\odot W \ar[d]^{f\odot g} \\
V'\otimes W' \ar[r]^{\xi_{V'W'}}_{\cong} & V'\odot W'
} \]
commutes. Note that via $\xi$ we can speak of elements
$v\odot w\in V\odot W,$ $v\odot w = \xi_{VW} (v\otimes w),$ $v\in V,$ $w\in W$.
As the diagram 
\[ \xymatrix{
(U\otimes V)\otimes W \ar[r]^{\xi \otimes \id} \ar[d]_a &
(U\odot V)\otimes W \ar[r]^{\xi} &
(U\odot V)\odot W \ar@{=}[d] \\
U\otimes (V\otimes W) \ar[r]^{\id \otimes \xi} &
U\otimes (V\odot W) \ar[r]^{\xi} &
U\odot (V\odot W)
} \]
commutes, the identity
\[ (u\odot v)\odot w = u\odot (v\odot w) \]
holds for elements $u\in U, v\in V$ and $w\in W$. \\

The basic idea behind the construction of $\odot$ is to build a specific new
equivalence of categories 
$L: \vect \rightleftarrows \vect^{\operatorname{str}}: R$ such that $LR$ is the
identity and then setting $V\odot W = R(LV*LW),$ where $*$ is the strictly associative 
tensor product in $\vect^{\operatorname{str}}$. Then
\begin{eqnarray*}
(U\odot V)\odot W & = & R(L(U\odot V)*LW) =
  R(LR (LU*LV)*LW) = R((LU*LV)*LW) \\
& = & R(LU*(LV*LW)) = R(LU*LR(LV*LW)) =
   R(LU*L(V\odot W)) \\
& = & U\odot (V\odot W).
\end{eqnarray*}
For the rest of this section we will always use the Schauenburg tensor product
on $\vect$ and thus will from now on write $\otimes_\vect$ or simply $\otimes$ for $\odot$. \\

A braiding on the strict monoidal category $\vect$ is defined by
taking $b: V\otimes W \cong W\otimes V$ to be $v\otimes w\mapsto
w\otimes v$. The hexagon equations are satisfied. As $b^2 =1$,
$\vect$ endowed with $b$ is thus a symmetric strict monoidal category.

\subsection{Linear Positive TFTs}

Let $(\catc, \otimes, I)$ be a strict monoidal small category and
$R: \catc \to \vect$ a strict monoidal functor, that is, a linear representation
of $\catc$. Let $\Fa$ be a system of fields and $\TT$ a system of $\catc$-valued
action exponentials. The \emph{$R$-linearization} $\LL$ of $\TT$ is given on a
bordism $W$ by the composition
\[ \LL_W: \Fa (W) \stackrel{\TT_W}{\longrightarrow} \Mor (\catc) 
 \stackrel{R}{\longrightarrow} \Mor (\vect). \]

\begin{prop} \label{prop.lvectvalactexp}
The $R$-linearization $\LL$ of $\TT$ is a system of $\vect$-valued action 
exponentials.
\end{prop}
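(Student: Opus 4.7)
The plan is to verify the three axioms (TDISJ), (TGLUE), (THOMEO) together with the empty-manifold normalization for $\LL_W = R \circ \TT_W$, exploiting that $R$ is strict monoidal. A strict monoidal functor preserves, on the nose, composition, tensor product, identity morphisms, and the unit object, so each axiom for $\LL$ reduces to the corresponding axiom for $\TT$ with an application of functoriality of $R$.

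More concretely, I would proceed as follows. For the empty manifold, $\LL_\varnothing(p) = R(\TT_\varnothing(p)) = R(\id_I) = \id_{R(I)} = \id_I$, where the last equality uses the strict monoidal assumption $R(I) = I$. For (TDISJ), given $f \in \Fa(W\sqcup W')$, axiom (TDISJ) for $\TT$ gives $\TT_{W\sqcup W'}(f) = \TT_W(f|_W)\otimes \TT_{W'}(f|_{W'})$; applying $R$ and using strict monoidality ($R(\alpha\otimes\beta)=R(\alpha)\otimes R(\beta)$) yields
\[ \LL_{W\sqcup W'}(f) = R(\TT_W(f|_W))\otimes R(\TT_{W'}(f|_{W'})) = \LL_W(f|_W)\otimes \LL_{W'}(f|_{W'}). \]
For (TGLUE), axiom (TGLUE) for $\TT$ together with functoriality $R(\beta\circ\alpha)=R(\beta)\circ R(\alpha)$ gives
\[ \LL_W(f) = R(\TT_{W''}(f|_{W''})\circ \TT_{W'}(f|_{W'})) = \LL_{W''}(f|_{W''})\circ \LL_{W'}(f|_{W'}). \]
Finally, (THOMEO) follows by applying $R$ to both sides of $\TT_W(\phi^\ast f) = \TT_{W'}(f)$.

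There is no serious obstacle: the proposition is essentially a tautology once one unpacks the definition of a strict monoidal functor. The only subtlety worth noting is the reliance on strictness of $R$, which guarantees the on-the-nose identities $R(I)=I$ and $R(\alpha\otimes\beta)=R(\alpha)\otimes R(\beta)$ needed to match the axioms of Definition \ref{def.actions}; a merely weak monoidal $R$ would introduce coherence isomorphisms and force the axioms to be stated up to these isomorphisms. Given the prior choice (made in the paper via the Schauenburg tensor product) to arrange $\vect$ as a strict monoidal category, this is precisely what is available.
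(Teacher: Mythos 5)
Your proof is correct and follows essentially the same route as the paper's: verify each of (TDISJ), (TGLUE), (THOMEO) for $\LL = R\circ \TT$ by applying the corresponding axiom for $\TT$ and then the strict monoidality/functoriality of $R$. You additionally check the empty-manifold normalization $\LL_\varnothing(p)=\id_I$ explicitly, which the paper leaves implicit; otherwise the arguments coincide.
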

\begin{proof}
Using axiom (TDISJ) for $\TT$, we have for a disjoint union $W\sqcup W'$
and a field $f\in \Fa (W\sqcup W')$:
\begin{eqnarray*}
\LL_{W\sqcup W'} (f) & = &
 R(\TT_{W\sqcup W'} (f)) 
 =  R(\TT_W (f|_W) \otimes_{\catc} \TT_{W'} (f|_{W'})) \\
& = & R\TT_W (f|_W) \otimes_{\vect} R\TT_{W'} (f|_{W'}) 
 =  \LL_W (f|_W) \otimes_{\vect} \LL_{W'} (f|_{W'}).
\end{eqnarray*}
This proves (TDISJ) for $\LL$.
If $W=W'\cup_N W''$ is obtained by gluing a bordism
$W'$ with outgoing boundary $N$ to a bordism $W''$ with incoming boundary
$N$, then on $f\in \Fa (W),$
\begin{eqnarray*}
\LL_{W} (f) & = &
 R(\TT_{W} (f)) = R(\TT_{W''} (f|_{W''}) \circ_{\catc} \TT_{W'} (f|_{W'})) \\
& = & R\TT_{W''} (f|_{W''}) \circ_{\vect} R\TT_{W'} (f|_{W'}) 
 =  \LL_{W''} (f|_{W''}) \circ_{\vect} \LL_{W'} (f|_{W'}),
\end{eqnarray*}
using (TGLUE) for $\TT$. This establishes (TGLUE) for $\LL$.
Lastly, for a homeomorphism $\phi:W\to W'$ and a field $f\in \Fa (W'),$
\[ \LL_W (\phi^\ast f) = R\TT_W (\phi^\ast f) = R\TT_{W'} (f)
  = \LL_{W'} (f), \]
using (THOMEO) for $\TT$ and proving this axiom for $\LL$.
\end{proof}
Using the quantization of Section \ref{sec.constft}, the linearization
$\LL$ thus determines a positive TFT with state modules
$Z(M)$, which are Frobenius semialgebras over the semiring
$Q_S (\vect)^c$ and over the semiring $Q_S (\vect)^m$.
If $W$ is a closed $(n+1)$-manifold, then its state sum
$Z_W$ lies in $Z(\varnothing)=Q_S (\vect).$ Thus given any
two vector spaces $V,V'$ and a linear operator $A: V\to V'$,
the state sum yields topologically invariant values $(Z_W)_{VV'} (A)$ in $S$.
If $W$ is not closed, then we may apply the Frobenius counit
to $Z_W \in Z(\partial W)$ to get topological invariants
$(\epsilon_W (Z_W))_{VV'} (A)\in S$.

\section{The Cylinder, Idempotency, and Projections}
\label{sec.cylidemproj}

Let $M$ be a closed $n$-manifold. The cylinder $W=M\times [0,1]$, viewed as a
bordism from $M=M\times \{ 0 \}$ to $M= M\times \{ 1 \},$ plays a special role
in any topological quantum field theory, since its homeomorphism class 
functions as the identity morphism
$M\to M$ in cobordism categories. Thus in such categories, the cylinder is in
particular idempotent and in light of the Gluing Theorem \ref{thm.statesumgluing}
it is reasonable to expect the state sum $Z_{\mti}$ to be idempotent
as well. We shall show below that this can indeed be deduced from our axioms.
It does not, however, follow from these axioms that $Z_{M\times [0,1]}$ must be
a predetermined universal element in $Z(M)\hotimes Z(M)$ which only depends on $M$ and not
on the action. This creates problems for any attempt to recast positive TFTs
as functors on cobordism categories. One could of course add axioms to the
definition of category valued action exponentials $\TT$ that would force
$Z_{\mti}$ to be a ``canonical'' element. Practical experience indicates
that this is undesirable, as it would suppress a range of naturally arising,
interesting actions. We shall write $M_2 = M\sqcup M$.

\begin{prop}  \label{prop.statecylidempot}
The state sum $Z_{\mti}$ of a cylinder is idempotent, that is,
\[ \langle Z_{\mti}, Z_{\mti} \rangle = Z_{\mti} \in Z(M_2). \]
\end{prop}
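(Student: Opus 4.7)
The plan is to combine the Gluing Theorem \ref{thm.statesumgluing} with the Topological Invariance Theorem \ref{thm.statetopinv} applied to the obvious rescaling of cylinders. First I would apply the gluing formula to the decomposition $W' = W'' = \mti$, where each factor is viewed as a bordism from $M$ to $M$. The resulting glued bordism $W = W' \cup_M W''$ is canonically $M \times [0,2]$, viewed as a bordism from $M \times \{0\}$ to $M \times \{2\}$, and Theorem \ref{thm.statesumgluing} gives
\[ \langle Z_{\mti}, Z_{\mti} \rangle = Z_{M \times [0,2]} \in Z(M_2). \]

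Next I would invoke the rescaling homeomorphism of bordisms $\phi \colon M \times [0,2] \to M \times [0,1]$ defined by $(x,t) \mapsto (x, t/2)$. This $\phi$ sends incoming boundary to incoming boundary and outgoing boundary to outgoing boundary, and its restriction $\phi_\partial \colon M_2 \to M_2$ is the identity after the standard identification of $M \times \{2\}$ with $M \times \{1\}$. Consequently, the induced isomorphism $\phi_{\partial \ast} \colon Z(M_2) \to Z(M_2)$ is the identity. By Theorem \ref{thm.statetopinv} on topological invariance of the state sum, $\phi_{\partial \ast}(Z_{M \times [0,2]}) = Z_{\mti}$, so
\[ Z_{M \times [0,2]} = Z_{\mti} \in Z(M_2). \]
Chaining these two equalities yields the asserted idempotency.

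The only point requiring any care is the identification of $M \times \{2\}$ with $M \times \{1\}$ that lets us regard $\phi_\partial$ as a self-homeomorphism of $M_2$; this is purely bookkeeping about which factor of the disjoint union is incoming and which is outgoing, and does not interact in any nontrivial way with the axioms on fields and action exponentials. No infinite summations or subtle properties of the complete semiring enter here beyond what is already packaged into Theorems \ref{thm.statetopinv} and \ref{thm.statesumgluing}, which is why the proof should be very short.
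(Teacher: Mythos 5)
Your proof is correct and uses essentially the same two ingredients as the paper (the Gluing Theorem plus topological invariance under the rescaling homeomorphism $(x,t)\mapsto(x,t/2)$), just run in the opposite direction: the paper splits $M\times[0,1]$ into $M\times[0,\smlhf]$ and $M\times[\smlhf,1]$ and identifies each half's state sum with $Z_{\mti}$, whereas you glue two unit cylinders into $M\times[0,2]$ and then rescale. The paper's direction has the minor advantage that the two pieces are literal subspaces sharing the common boundary $N=M\times\{\smlhf\}$, so the gluing theorem applies verbatim without the identification of $M\times\{1\}$ with $M\times\{0\}$ that your version needs — but that bookkeeping is harmless and is performed elsewhere in the paper (e.g.\ in the discussion surrounding equation (\ref{equ.ttoncyl}) and in the proof of Proposition \ref{prop.pimnprojection}).
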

\begin{proof}
Let $\phi: \mti \to M\times [0,\smlhf]$ be the homeomorphism
$\phi (x,t) = (x,t/2),$ $x\in M,$ $t\in [0,1]$. Then the restriction
$\phi_\partial: M_2 \to M_2$ of $\phi$ to the boundary is the identity
map, $\phi_\partial = \id_{M_2}$. Consequently, $\phi_{\partial \ast}: Z(M_2)\to
Z(M_2)$ is the identity as well. By Theorem \ref{thm.statetopinv},
\[ Z_{\mti} = \phi_{\partial \ast} (Z_{\mti}) = Z_{M\times [0,\smlhf]}. \]
Similarly, $Z_{\mti} = Z_{M\times [\smlhf, 1]}.$
Let $W' = M\times [0,\smlhf],$ $W'' = M\times [\smlhf, 1]$ and
$N=M\times \{ \smlhf \}.$ Then $\mti = W' \cup_N W''$ and we
deduce from the Gluing Theorem \ref{thm.statesumgluing} that
\[  \langle Z_{\mti}, Z_{\mti} \rangle = 
 \langle Z_{W'}, Z_{W''} \rangle =
Z_{\mti}. \]
\end{proof}

Let $M$ and $N$ be two closed $n$-manifolds. Define a map
\[ \pi_{M,N}: Z(M\sqcup N) \longrightarrow Z(M\sqcup N) \]
by $\pi_{M,N} (z) = \langle Z_{\mti}, z \rangle.$
By Proposition \ref{prop.contractbilinear},
$\pi_{M,N}$ is right $Q^c$-linear. The contraction involved here is
\[ \gamma: E(M\times 0)\hotimes E(M\times 1) \hotimes E(M\times 1)
  \hotimes E(N) \longrightarrow E(M\times 0)\hotimes E(N). \]
Thus, technically, $\pi_{M,N}$ is a map
\[ \pi_{M,N}: Z(M\times 1)\hotimes Z(N) \longrightarrow
  Z(M\times 0)\hotimes Z(N) \]
given explicitly by
\[ \pi_{M,N} (z)(f,g) = \sum_{h\in \Fa (M\times 1)}
  Z_{\mti} (f,h)\cdot z(h,g), \]
$f\in \Fa (M\times 0),$ $g\in \Fa (N)$.
\begin{prop} \text{ }\\ \label{prop.pimnprojection}
(1) The state sum $Z_W$ of any bordism $W$ from $M$ to $N$ is
in the image of $\pi_{M,N}$. In fact $Z_W = \pi_{M,N} (Z_W)$, i.e.
$\pi_{M,N}$ acts as the identity on the set of all state sums. \\
(2) The map $\pi_{M,N}$ is a projection, that is, $\pi^2_{M,N} = \pi_{M,N}$.
\end{prop}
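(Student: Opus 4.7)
My plan is to derive both parts from the Gluing Theorem \ref{thm.statesumgluing}, together with the two tools already available: topological invariance of the state sum (Theorem \ref{thm.statetopinv}) for part (1), and the associativity of the contraction together with the idempotency of the cylinder state sum (Propositions \ref{prop.contractassoc} and \ref{prop.statecylidempot}) for part (2).

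For part (1), given a bordism $W$ from $M$ to $N$, I would take $W' = M\times [0,1]$ (viewed as a bordism from $M\times 0$ to $M\times 1$) and $W'' = W$, and glue them along $M$. By the Gluing Theorem, $Z_{W'\cup_M W} = \langle Z_{M\times [0,1]}, Z_W\rangle = \pi_{M,N}(Z_W)$. It remains to identify the glued bordism $(M\times [0,1])\cup_M W$ with $W$ itself as bordisms. Using Brown's collar neighborhood theorem \cite{browntopcollars} to pick a collar of $M\subset W$, one produces a homeomorphism $(M\times [0,1])\cup_M W \cong W$ whose restriction to the boundary is the identity on $M\sqcup N$ (the same universal-property-of-pushouts argument already appeared in the proofs of Theorem \ref{thm.aggregatetopinv} and in the proof that state sums satisfy the constraint equation). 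Topological invariance (Theorem \ref{thm.statetopinv}) then yields $Z_{(M\times [0,1])\cup_M W} = Z_W$, and hence $\pi_{M,N}(Z_W) = Z_W$.

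For part (2), I would argue purely algebraically, using no further geometry. For any $z\in Z(M\sqcup N)$, apply Proposition \ref{prop.contractassoc} (associativity of the contraction $\langle -,-\rangle$) to move the brackets:
\[
\pi_{M,N}\bigl(\pi_{M,N}(z)\bigr) = \bigl\langle Z_{M\times [0,1]},\, \langle Z_{M\times [0,1]}, z\rangle \bigr\rangle = \bigl\langle \langle Z_{M\times [0,1]}, Z_{M\times [0,1]}\rangle,\, z \bigr\rangle.
\]
By Proposition \ref{prop.statecylidempot}, the inner bracket collapses to $Z_{M\times [0,1]}$, giving $\pi_{M,N}(\pi_{M,N}(z)) = \langle Z_{M\times [0,1]}, z\rangle = \pi_{M,N}(z)$.

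I do not expect a serious obstacle here: part (1) is a direct calculation once the collar neighborhood identification is made (and that identification has already been used twice earlier in the paper, so it is essentially formal), and part (2) is a one-line consequence of associativity and the already-proved cylinder idempotency. The only small care needed is to keep track of the two copies of $M$ in $Z(M\times 0)\hotimes Z(M\times 1)$ when one interprets $\pi_{M,N}$ as an endomorphism of $Z(M\sqcup N)$, but this is a bookkeeping matter, not a mathematical obstacle.
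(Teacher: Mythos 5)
Your proposal is correct and matches the paper's own proof essentially verbatim: part (1) attaches the cylinder along the incoming boundary, identifies the result with $W$ via Brown's collar neighborhood theorem and a boundary-identity homeomorphism, and invokes Theorems \ref{thm.statetopinv} and \ref{thm.statesumgluing}; part (2) is exactly the associativity-plus-idempotency computation of Propositions \ref{prop.contractassoc} and \ref{prop.statecylidempot}. No gaps.
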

\begin{proof}
We prove (1): Let $\widehat{W} = \mti \cup_{M\times \{ 1 \}} W$
be the topological manifold obtained from attaching the cylinder along $M\times \{ 1 \}$
to the incoming boundary $M$ of $W$. By Brown's collar neighborhood theorem \cite{browntopcollars},
the boundary $\partial W$ of the topological manifold $W$ possesses a collar.
Using this collar, there exists a homeomorphism $\phi: \widehat{W} 
\stackrel{\cong}{\longrightarrow} W,$ which is the identity on the boundary,
$\phi_\partial = \id_{M\sqcup N}: \partial \widehat{W} =
M\sqcup N \to M \sqcup N=\partial W$.
By Theorem \ref{thm.statetopinv},
$Z_{\widehat{W}} = \phi_{\partial \ast} (Z_{\widehat{W}}) = Z_W.$
By the Gluing Theorem \ref{thm.statesumgluing},
\[ Z_W = Z_{\widehat{W}} = \langle Z_{\mti}, Z_W \rangle = 
  \pi_{M,N} (Z_W). \]
We prove (2): By Proposition \ref{prop.contractassoc}, the contraction
$\langle -,- \rangle$ is associative. Therefore, using the idempotency of
$Z_{\mti}$ (Proposition \ref{prop.statecylidempot}),
\begin{eqnarray*}
\pi^2_{M,N} (z) & = &
 \langle Z_{\mti}, \langle Z_{\mti}, z \rangle \rangle 
= \langle \langle Z_{\mti}, Z_{\mti} \rangle, z \rangle \\
& = & \langle Z_{\mti}, z \rangle = \pi_{M,N} (z).
\end{eqnarray*}
\end{proof}
Taking $N$ to be the empty manifold, we obtain a projection
\[ \pi_M = \pi_{M,\varnothing}: Z(M) = Z(M\times 1) \longrightarrow Z(M)=Z(M\times 0). \]
(In the case $N=\varnothing$, the map $\gamma$
involved in the inner product defining $\pi_{M,N}$ becomes
\[ \gamma: E(M\times 0)\hotimes E(M\times 1) \hotimes E(M\times 1)
  \longrightarrow E(M\times 0) \]
under the identifications $E(\varnothing)\cong Q,$ 
$E\hotimes Q \cong E$.)
For $z\in Z(M),$ this projection is given by
the explicit formula
\[ \pi_M (z)(f) = \sum_{h\in \Fa (M\times 1)}
  Z_{\mti} (f,h)\cdot z(h), \]
$f\in \Fa (M\times 0)$.
In passing, let us observe the formal analogy to integral transforms 
\[ (Tg)(x) = \int K(x,\xi)g(\xi) d\xi, \]
given by an integral kernel $K$. Thus the state sum $Z_{\mti}$ of the cylinder
can be interpreted as such a kernel. We shall now pursue the question
how to compute the projection $\pi_{M\sqcup N}$ of a tensor product
of state vectors. Is the image again a tensor product?
\begin{lemma} \label{lem.cylmcmequcmc}
Let $\TT$ be a cylindrically firm system of $\catc$-valued action exponentials.
Given fields $f_M \in \Fa (M\times 0),$ $g_M \in \Fa (M\times 1),$
$f_N \in \Fa (N\times 0),$ $g_N \in \Fa (N\times 1)$ and
state vectors $z_M \in Z(M\times 1),$ $z_N \in Z(N\times 1),$ the identity
\[ \big( Z_{\mti} (f_M, g_M)\times Z_{\nti} (f_N, g_N) \big) \cdot
   \big( z_M (g_M) \times z_N (g_N) \big) \hspace{1cm}\]
\[ \hspace{1cm} =\big( Z_{\mti} (f_M, g_M)\cdot z_M (g_M) \big) \times
   \big( Z_{\nti} (f_N, g_N)\cdot z_N (g_N) \big) \]
holds in $Q_S (\catc)$.
\end{lemma}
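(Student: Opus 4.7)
The desired identity has exactly the shape $(a\times b)\cdot (c\times d) = (a\cdot c)\times (b\cdot d)$ of the multiplicative compatibility relation of Proposition~\ref{prop.abcd}, with $a = Z_{\mti}(f_M, g_M)$, $b = Z_{\nti}(f_N, g_N)$, $c = z_M(g_M)$ and $d = z_N(g_N)$ in $Q_S(\catc)$. We do not have the one-object hypothesis of that proposition, but the fact that $a$ and $b$ are state sums of cylinders, combined with cylindrical firmness, will play the substitute role.

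My plan is to evaluate both sides at an arbitrary morphism $\gamma : X \to Y$ in $\catc$ and unfold the convolution formulas defining $\times$ and $\cdot$, using the infinite distributivity of the complete semiring $S$ (and the Fubini-style partition property of the summation law). On the left-hand side, the outer $\cdot$ produces a sum over $\eta \circ \xi = \gamma$, and then each inner $\times$ produces sums over $\eta' \otimes \eta'' = \eta$ and $\xi' \otimes \xi'' = \xi$ respectively, collapsing into a single sum over $(\xi',\xi'',\eta',\eta'') \in TCT(\gamma)$ of the $S$-product
\[ z_N(g_N)(\eta'') \cdot z_M(g_M)(\eta') \cdot Z_{\nti}(f_N,g_N)(\xi'') \cdot Z_{\mti}(f_M,g_M)(\xi'). \]
Mirroring this computation on the right-hand side yields a single sum over $(\xi',\xi'',\eta',\eta'') \in CTC(\gamma)$ of the reordered product
\[ z_N(g_N)(\eta'') \cdot Z_{\nti}(f_N,g_N)(\xi'') \cdot z_M(g_M)(\eta') \cdot Z_{\mti}(f_M,g_M)(\xi'). \]
At this stage the two expressions differ both in the factorization constraint on $(\xi',\xi'',\eta',\eta'')$ and in the ordering of the four factors in $S$.

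To reconcile them I would substitute the defining expression $Z_{\mti}(f_M,g_M)(\xi') = \sum_{F_M \in \Fa(\mti,f_M,g_M),\ \TT_{\mti}(F_M)=\xi'} 1_S$, and analogously for $N$. Since $1_S$ is central in $S$, sums of $1_S$ commute past arbitrary factors by the left/right infinite distributive laws, so after absorbing the ones and reindexing the $\xi',\xi''$ variables by the choice of fields $F_M, F_N$ with $\xi' = \TT_{\mti}(F_M)$, $\xi'' = \TT_{\nti}(F_N)$, both sides reduce to a single sum indexed by quadruples $(F_M, F_N, \eta', \eta'')$ with summand $z_N(g_N)(\eta'') \cdot z_M(g_M)(\eta')$. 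The only remaining discrepancy is that the LHS constraint becomes $(\eta', \eta'') \in TCT(\gamma; \TT_{\mti}(F_M), \TT_{\nti}(F_N))$ while the RHS constraint becomes $(\eta', \eta'') \in CTC(\gamma; \TT_{\mti}(F_M), \TT_{\nti}(F_N))$. The cylindrical firmness hypothesis on $\TT$ asserts precisely the equality of these two sets for every choice of $F_M, F_N$, so the two sums agree term by term. The main technical obstacle is bookkeeping the interleaving of $z$- and $Z$-factors in $S$ and carefully justifying the passage of the sums of $1_S$ through the other factors, but once the centrality of $1_S$ and the partition axiom of the complete semiring $S$ are brought to bear, the identification is routine.
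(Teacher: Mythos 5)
Your proposal is correct and follows essentially the same route as the paper's proof: expand both sides into sums over $TCT(\gamma)$ and $CTC(\gamma)$, substitute the characteristic-function description of the cylinder state sums so that the $\xi',\xi''$ variables collapse to $\TT_{\mti}(F_M),\TT_{\nti}(F_N)$, commute the resulting $0$/$1$-valued (hence central) factors to reorder the product, and invoke cylindrical firmness to identify $TCT(\gamma;\TT_{\mti}(F_M),\TT_{\nti}(F_N))$ with $CTC(\gamma;\TT_{\mti}(F_M),\TT_{\nti}(F_N))$. The only cosmetic difference is that you commute the full sum $Z_{\nti}(f_N,g_N)(\xi'')$ of $1$'s, whereas the paper commutes the individual terms $T_{\nti}(F_N)(\xi'')$; both are valid for the same reason.
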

\begin{proof}
We put $a=Z_{\mti} (f_M, g_M),$ $b=Z_{\nti} (f_N, g_N),$
$c = z_M (g_M)$ and $d = z_N (g_N)$. On a morphism $\xi',$
\[ a(\xi') = \sum_{F_M \in \Fa (\mti, f_M \sqcup g_M)} 
  T_{\mti} (F_M)(\xi'), \]
with
\[ T_{\mti} (F_M)(\xi') = \begin{cases} 1,& \xi' = \TT_{\mti} (F_M) \\
    0,& \text{otherwise.} \end{cases} \]
Similarly, on a morphism $\xi'',$
\[ b(\xi'') = \sum_{F_N \in \Fa (\nti, f_N \sqcup g_N)} 
  T_{\nti} (F_N)(\xi''), \]
with
\[ T_{\nti} (F_N)(\xi'') = \begin{cases} 1,& \xi'' = \TT_{\nti} (F_N) \\
    0,& \text{otherwise.} \end{cases} \]
Thus (see also the proof of Proposition \ref{prop.abcd}),
\[
((a\times b)\cdot (c\times d))_{XY} (\gamma)  =
 \sum_{(\xi', \xi'', \eta', \eta'')\in TCT(\gamma)}
  d(\eta'') c(\eta') b(\xi'') a(\xi') \hspace{3cm} \]
\begin{eqnarray*}
& = & \sum_{F_M, F_N} \sum_{(\xi', \xi'', \eta', \eta'')\in TCT(\gamma)}
  d(\eta'') c(\eta') T_{\nti} (F_N)(\xi'') T_{\mti} (F_M)(\xi') \\
& = & \sum_{F_M, F_N} \sum_{(\eta', \eta'')\in 
    TCT(\gamma; \TT_{\mti} (F_M), \TT_{\nti} (F_N))}
  d(\eta'') c(\eta') \\
& = & \sum_{F_M, F_N} \sum_{(\eta', \eta'')\in 
    CTC(\gamma; \TT_{\mti} (F_M), \TT_{\nti} (F_N))}
  d(\eta'') c(\eta') \\
& = & \sum_{F_M, F_N} \sum_{(\xi', \xi'', \eta', \eta'')\in CTC(\gamma)}
  d(\eta'') T_{\nti} (F_N)(\xi'') c(\eta') T_{\mti} (F_M)(\xi') \\
& = & \sum_{(\xi', \xi'', \eta', \eta'')\in CTC(\gamma)}
  d(\eta'') b(\xi'') c(\eta') a(\xi') \\
& = & ((a\cdot c)\times (b\cdot d))_{XY} (\gamma).
\end{eqnarray*}
Note that the element $T_{\nti} (F_N)(\xi'')$ commutes with any element of $S$,
since it is either $1$ or $0$. 
\end{proof}

\begin{thm} \label{thm.projmdisjn}
Let $M, N$ be closed $n$-manifolds and $z_M \in Z(M),$ $z_N \in Z(N)$.
If $\TT$ is a cylindrically firm system of $\catc$-valued action exponentials, then
\[ \pi_{M\sqcup N} (z_M \hotimes_m z_N) =
  \pi_M (z_M) \hotimes_m \pi_N (z_N). \]
\end{thm}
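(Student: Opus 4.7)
The plan is to unpack both sides of the desired identity, evaluated on a pair of fields $(f_M,f_N)\in \Fa(M\times 0)\times \Fa(N\times 0)$, and reduce the equality to Lemma \ref{lem.cylmcmequcmc} combined with the infinite distributivity of $\times$ over $\sum$ in the complete semiring $Q^m$.

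First I would spell out the left-hand side. By the definition of $\pi_{M\sqcup N}$ as the contraction against $Z_{(M\sqcup N)\times I}$, and identifying $\Fa((M\sqcup N)\times 1)\cong \Fa(M\times 1)\times \Fa(N\times 1)$ via axiom (FDISJ), we have
\[
\pi_{M\sqcup N}(z_M\hotimes_m z_N)(f_M,f_N)
 = \sum_{(g_M,g_N)} Z_{(M\sqcup N)\times I}(f_M\sqcup f_N,\, g_M\sqcup g_N)\cdot (z_M\hotimes_m z_N)(g_M,g_N).
\]
Next I invoke Theorem \ref{thm.zdisjunion} applied to the disjoint union of bordisms $\mti\sqcup \nti$ to rewrite
\[
Z_{(M\sqcup N)\times I}(f_M\sqcup f_N,\,g_M\sqcup g_N) = Z_{\mti}(f_M,g_M)\times Z_{\nti}(f_N,g_N),
\]
and by definition of $\hotimes_m$ I have $(z_M\hotimes_m z_N)(g_M,g_N)=z_M(g_M)\times z_N(g_N)$.

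The crux is then to move from the composition product of two monoidal products to the monoidal product of two composition products. This is exactly the content of Lemma \ref{lem.cylmcmequcmc}, whose hypothesis is that $\TT$ is cylindrically firm; applying it summand by summand yields
\[
\bigl(Z_{\mti}(f_M,g_M)\times Z_{\nti}(f_N,g_N)\bigr)\cdot\bigl(z_M(g_M)\times z_N(g_N)\bigr)
 = \bigl(Z_{\mti}(f_M,g_M)\cdot z_M(g_M)\bigr)\times\bigl(Z_{\nti}(f_N,g_N)\cdot z_N(g_N)\bigr).
\]
Summing over the product index set $\Fa(M\times 1)\times \Fa(N\times 1)$, I can then use the Fubini identity (\ref{equ.sumijmonoid}) together with the infinite distributivity of $\times$ over $\sum$ in $Q^m$ to factor the double sum as
\[
\Bigl(\sum_{g_M} Z_{\mti}(f_M,g_M)\cdot z_M(g_M)\Bigr)\times \Bigl(\sum_{g_N} Z_{\nti}(f_N,g_N)\cdot z_N(g_N)\Bigr) = \pi_M(z_M)(f_M)\times \pi_N(z_N)(f_N),
\]
which is precisely $\bigl(\pi_M(z_M)\hotimes_m \pi_N(z_N)\bigr)(f_M,f_N)$.

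The only genuine obstacle is justifying the commutation of $\cdot$ and $\times$ on the elementary factors, but this has already been isolated in Lemma \ref{lem.cylmcmequcmc}; once that lemma is in hand, the remainder is a bookkeeping exercise with the summation axioms of complete semirings together with (FDISJ) and Theorem \ref{thm.zdisjunion}. I would note in passing that the hypothesis of cylindrical firmness cannot be dropped here, since without it the key identity $TCT(\gamma;\TT_{\mti}(F_M),\TT_{\nti}(F_N))=CTC(\gamma;\TT_{\mti}(F_M),\TT_{\nti}(F_N))$ exploited inside Lemma \ref{lem.cylmcmequcmc} fails.
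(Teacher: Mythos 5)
Your proposal is correct and follows essentially the same route as the paper's proof: evaluate on a boundary field, decompose the cylinder state sum via Theorem \ref{thm.zdisjunion} and (FDISJ), apply Lemma \ref{lem.cylmcmequcmc} termwise, and factor the double sum using completeness and infinite distributivity. Nothing is missing.
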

\begin{proof}
Using Theorem \ref{thm.zdisjunion} to decompose
$Z_{(M\sqcup N)\times [0,1]} = Z_{\mti} \hotimes_m Z_{\nti},$
axiom (FDISJ) to decompose $\Fa ((M\sqcup N)\times 1) \cong
\Fa (M\times 1)\times \Fa (N\times 1)$ and
Lemma \ref{lem.cylmcmequcmc}, we compute the value of
$\pi_{M\sqcup N} (z_M \hotimes_m z_N)$ on a field
$f\in \Fa ((M\sqcup N)\times 0)$:
\[ \pi_{M\sqcup N} (z_M \hotimes_m z_N)(f) =
  \sum_{g\in \Fa ((M\sqcup N)\times 1)} Z_{(M\sqcup N)\times [0,1]} (f,g)
  \cdot (z_M \hotimes_m z_N)(g) \hspace{2cm} \]
\begin{eqnarray*}
& = & \sum_g (Z_{\mti} \hotimes_m Z_{\nti})(f,g)\cdot (z_M \hotimes_m z_N)(g) \\
& = & \sum_g \Big( Z_{\mti} (f|_{M\times 0}, g|_{M\times 1}) \times 
    Z_{\nti} (f|_{N\times 0}, g|_{N\times 1}) \Big) \cdot 
  \Big( z_M (g|_{M\times 1}) \times z_N (g|_{N\times 1}) \Big) \\
& = & \sum_{(g_M,g_N)\in \Fa (M)\times \Fa (N)} 
  \Big( Z_{\mti} (f|_{M}, g_M) \times 
    Z_{\nti} (f|_{N}, g_N) \Big) \cdot 
  \Big( z_M (g_M) \times z_N (g_N) \Big) \\
& = & \sum_{g_M \in \Fa (M)} \sum_{g_N \in \Fa (N)} 
  \Big( Z_{\mti} (f|_{M}, g_M) \cdot 
    z_M (g_M) \Big) \times 
  \Big( Z_{\nti} (f|_N, g_N) \cdot z_N (g_N) \Big) \\
& = & \Big\{ \sum_{g_M \in \Fa (M)}  
   Z_{\mti} (f|_{M}, g_M) \cdot z_M (g_M) \Big\} \times 
  \Big\{ \sum_{g_N \in \Fa (N)} Z_{\nti} (f|_N, g_N) \cdot z_N (g_N) \Big\} \\
& = & \pi_M (z_M)(f|_{M\times 0})\times \pi_N (z_N)(f|_{N\times 0}) \\
& = & (\pi_M (z_M)\hotimes_m \pi_N (z_N))(f).
\end{eqnarray*}
\end{proof}

\begin{cor}
If $\catc$ is a monoid, then
\[ \pi_{M\sqcup N} (z_M \hotimes_m z_N) =
  \pi_M (z_M) \hotimes_m \pi_N (z_N). \]
\end{cor}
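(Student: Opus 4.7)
The plan is to reduce the corollary directly to Theorem \ref{thm.projmdisjn} by verifying that its hypothesis, namely cylindrical firmness of $\TT$, is automatic when $\catc$ is a monoid. This is essentially the remark made in the paragraph preceding the theorem, so the corollary should be a one-line deduction once the reduction is made explicit.

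First, I would recall Proposition \ref{prop.tctctconeobject}, which asserts that in a strict monoidal category with a single object, the full tensor-composition-tensor set $TCT(\gamma)$ coincides with the composition-tensor-composition set $CTC(\gamma)$ for every morphism $\gamma$. Since $\catc$ is assumed here to be a monoid (i.e.\ has exactly one object, namely the unit object $I$), this proposition applies and gives $TCT(\gamma)=CTC(\gamma)$ for all $\gamma\in \Mor(\catc)$.

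Next, I would observe that for any fixed morphisms $\xi',\xi''$, the constrained sets $TCT(\gamma;\xi',\xi'')$ and $CTC(\gamma;\xi',\xi'')$ are simply the fibers of $TCT(\gamma)$ and $CTC(\gamma)$ over $(\xi',\xi'')$ under the forgetful projection $(\xi',\xi'',\eta',\eta'')\mapsto(\xi',\xi'')$. Equality of the larger sets therefore forces equality of these fibers. Specializing $\xi'=\TT_{\mti}(F_M)$ and $\xi''=\TT_{\nti}(F_N)$ for arbitrary closed $n$-manifolds $M,N$ and fields $F_M\in\Fa(\mti)$, $F_N\in\Fa(\nti)$ shows that $\TT$ satisfies the defining condition of cylindrical firmness.

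Having established cylindrical firmness, the corollary follows immediately by invoking Theorem \ref{thm.projmdisjn}. There is no real obstacle in this argument: the entire content is bookkeeping that connects the one-object hypothesis on $\catc$ to the combinatorial $TCT=CTC$ condition used in the proof of Theorem \ref{thm.projmdisjn} via Lemma \ref{lem.cylmcmequcmc}. The only minor subtlety worth stating explicitly is the passage from the unconstrained equality $TCT(\gamma)=CTC(\gamma)$ to the pointed equality $TCT(\gamma;\xi',\xi'')=CTC(\gamma;\xi',\xi'')$, which however is automatic once one recognizes the pointed sets as fibers.
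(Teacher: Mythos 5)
Your proposal is correct and follows exactly the route the paper intends: the corollary is an immediate consequence of Theorem \ref{thm.projmdisjn} once one notes (as the paper does just after defining cylindrical firmness) that Proposition \ref{prop.tctctconeobject} makes $\TT$ cylindrically firm whenever $\catc$ has one object. Your explicit observation that the pointed sets $TCT(\gamma;\xi',\xi'')$ and $CTC(\gamma;\xi',\xi'')$ are fibers of the unpointed ones is the right (and only) bookkeeping step needed to pass from the proposition to the definition of cylindrical firmness.
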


\begin{remark}
Following classical topological field theory, and informed by Proposition \ref{prop.pimnprojection},
one might now attempt to set
$Z' (M) = \pi_M Z(M)$.
In the present framework of positive topological field theory, this smaller
state module is not serviceable, for at least the following reason:
In order to obtain an analog of Proposition \ref{prop.zmndisjtensdecomp} for $Z'$,
i.e. a decomposition $Z'(M\sqcup N) \cong Z'(M) \hotimes Z'(N)$,
one would have to rely on results such as Theorem \ref{thm.projmdisjn}, which
breaks up a projection of a tensor product on a disjoint union into a tensor product of projections
on the components. But for many interesting actions that
one wishes to apply the present framework to, the assumption of cylindrical firmness is
not germane. So we refrain from
passing to the images of the projections $\pi_M$.
\end{remark}

\section{Application: P\'olya Counting Theory}
\label{sec.polya}

We will demonstrate that P\'olya's theory \cite{polyacounting}
of counting colored configurations
modulo symmetries can be recreated within the framework of positive topological
field theories by making a judicious choice of monoidal category $\catc$, fields $\Fa$
and action functional $\TT$. We will derive P\'olya's enumeration formula by
interpreting its terms as state sums and applying our theorems on state sums.
P\'olya theory has a wide range of applications, among them chemical isomer
enumeration, investigation of crystal structure, applications in graph theory and
statistical mechanics. The method of this section will also prove Burnside's lemma on
orbit counting from TFT formulae. \\

The set $C=\{ 1,\chi,\mu \}$ becomes a commutative monoid $(C,\cdot, 1)$ by setting
\[ \chi^2 =\mu,~ \chi \cdot \mu =\mu,~ \mu^2 = \mu. \]
(Associativity is readily verified.) By Lemma \ref{lem.commmonoidstrictmoncat},
$(C,\cdot,1)$ determines a small strict monoidal category $\catc = \catc (C)$.
Suppose that a finite group $G$ acts on a finite set $X$ (from the left).
Let $Y$ be another finite set, whose elements we think of as ``colors''.
Then $G$ acts on the set $W = \fun_Y (X)$ of functions $w: X\to Y$ from the right
by $(wg)(x)=w(gx)$. We interpret the elements of $W$ as colorings of $X$.
To form our positive TFT, we also interpret $W$ and its subsets as 
$0=(-1+1)$-dimensional bordisms (whose boundary is necessarily empty).
Given a subset $W' \subset W,$ i.e. a codimension $0$ submanifold, we define
a set $\Ea (W')$ over it by
\[ \Ea (W') = \{ (w,g)\in W'\times G ~|~ g\in G_w \}, \]
where $G_w \subset G$ denotes the isotropy subgroup of $G$ at $w$.
There is an inclusion $\Ea (W')\subset \Ea (W)$. We define the fields on $W'$ to
be
$\Fa (W')= \fun_{\bool} (\Ea (W')),$
where $\bool$ is the Boolean semiring. The restriction $\Fa (W')\to \Fa (W'')$
associated to a subspace $W'' \subset W'$ is given in the natural way, that is,
by restricting a function $F: \Ea (W')\to \bool$ to $\Ea (W'')\subset \Ea (W')$.
Then axiom (FRES) is satisfied. For disjoint subsets $W', W'' \subset W$, we have
$\Ea (W' \sqcup W'') = \Ea (W') \sqcup \Ea (W'')$ and hence
\begin{eqnarray*}
\Fa (W' \sqcup W'') & = & \fun_\bool (\Ea (W' \sqcup W'')) = \fun_\bool (\Ea (W')\sqcup \Ea (W'')) \\
& \cong & \fun_\bool (\Ea (W'))\times \fun_\bool (\Ea (W'')) = \Fa (W')\times \Fa (W'').
\end{eqnarray*}  
Axiom (FDISJ) is thus satisfied. Axiom (FGLUE) is vacuously satisfied, as $W'$ and $W''$
cannot share a nonempty boundary component $N$, and we have already discussed
$N=\varnothing$. (Note that $\otimes = \circ$ in $\catc$.) Since we are working in an
equivariant context, axiom (THOMEO) has to be restricted to equivariant homeomorphisms.
Such a homeomorphism $\phi: W' \to W''$ induces a bijection
$\Ea (\phi): \Ea (W') \to \Ea (W'')$ by $\Ea (\phi)(w,g)=(\phi (w),g),$ since
$\phi (w)\cdot g = \phi (w\cdot g)=\phi (w).$ Obviously
$\Ea (\psi \circ \phi) = \Ea (\psi)\circ \Ea (\phi)$ and $\Ea (\id_{W'}) = \id_{\Ea (W')}$.
Define the action of $\phi$ on fields by
\[ \phi^\ast = \fun_\bool (\Ea (\phi)): \Fa (W'') = \fun_\bool (\Ea (W''))\longrightarrow
  \fun_\bool (\Ea (W')) = \Fa (W'). \]
Then $(\psi \circ \phi)^\ast = \phi^\ast \circ \psi^\ast,$ 
$\id^\ast_{W'} = \id_{\Fa (W')},$ and a commutative square
\[ \xymatrix{
W' \ar[r]^{\phi} & W'' \\
W'_0 \ar@{^{(}->}[u] \ar[r]_{\phi_0} & W''_0 \ar@{^{(}->}[u]
} \]
with $\phi, \phi_0$ equivariant homeomorphisms, induces a commutative square
\[ \xymatrix{
\Fa (W'') \ar[d]_{\operatorname{res}} \ar[r]^{\phi^\ast} & \Fa (W') \ar[d]^{\operatorname{res}} \\
\Fa (W''_0)  \ar[r]_{\phi^\ast_0} & \Fa (W'_0)
} \]
on fields. Thus (FHOMEO) is satisfied and $\Fa$ is a system of fields on subsets of $W$.
The action functional on a field $F\in \Fa (W')$ is by definition
\[ \TT_{W'} (F) = \begin{cases}
 \id_I,& \text{ if } F \text{ is identically } 0, \\
 \chi,& \text{ if } F \text{ is the characteristic function of some element}, \\
 \mu,& \text{ otherwise.}
\end{cases} \]
To verify that $\TT$ is a valid system of action exponentials, let $W', W'' \subset W$ be
disjoint subsets and let $F: \Ea (W'\sqcup W'') = \Ea (W')\sqcup \Ea (W'')\to \bool$ be a field
on their union. If $\TT_{W' \sqcup W''} (F)=\id_I,$ then $F=0$ and so
$F|_{\Ea (W')=0}$ and $F|_{\Ea (W'')}=0.$ Consequently, $\TT_{W'} (F|_{W'})=\id_I$ and
$\TT_{W''} (F|_{W''})=\id_I$. The tensor product of these two morphisms is
$\id_I \otimes \id_I = \id_I = \TT_{W' \sqcup W''}(F).$ If
$\TT_{W' \sqcup W''}(F)=\chi,$ then $F$ is a characteristic function and thus either
$F|_{\Ea (W')} =0$ and $F|_{\Ea (W'')}$ is characteristic, or $F|_{\Ea (W')}$ is characteristic and
$F|_{\Ea (W'')}=0$. Thus precisely one of $\TT_{W'} (F|_{W'}),$ $\TT_{W''} (F|_{W''})$
is $\chi$, the other $\id_I$. Hence their tensor product is $\chi \otimes \id_I = \chi =
\TT_{W' \sqcup W''} (F).$ Finally, if $\TT_{W' \sqcup W''}(F)=\mu,$ then either
$\TT_{W'}(F|_{W'})=\mu$ or $\TT_{W''} (F|_{W''})=\mu$ or else
$\TT_{W'}(F|_{W'})=\chi = \TT_{W''}(F|_{W''}).$
Therefore,
\[ \TT_{W'}(F|_{W'}) \otimes \TT_{W''}(F|_{W''}) \in \{ \chi \otimes \chi,
  \mu \otimes \alpha \} = \{ \mu \}, \]
for $\alpha \in \Mor (\catc),$ which agrees with $\TT_{W' \sqcup W''} (F)=\mu$.
We conclude that (TDISJ) holds. Thus (TGLUE) holds also, since $N=\varnothing$ and
$\circ = \otimes$ in $\catc$. 
Let $\phi: W' \to W''$ be an equivariant homeomorphism and $G: \Ea (W'')\to \bool$ a
field on $W''$. If $G=0$, then $\phi^\ast G = G\circ \Ea (\phi)=0$ and
$\TT_{W'} (\phi^\ast G)=\id_I = \TT_{W''} (G)$. If $G$ is the characteristic function
of some element $(w,g)\in \Ea (W'')$, then $\phi^\ast G$ is the characteristic function
of $(\phi^{-1} (w),g)$ and $\TT_{W'} (\phi^\ast G)=\chi = \TT_{W''} (G)$.
If there are two distinct elements of $\Ea (W'')$ on which $G$ takes the value $1$,
then the $\phi$-preimages of these are two distinct elements on which $\phi^\ast G$
has value $1$, and so $\TT_{W'} (\phi^\ast G)=\mu = \TT_{W''} (G)$.
Thus (THOMEO) is verified and $\TT$ is a system of action exponentials. \\

Let $S = \nat^{\infty}$ be the complete semiring of Example \ref{exple.completesemirings}.
The cardinality of a set $A$ is denoted $|A|$.
Let $Z$ be the positive TFT associated with $S, \catc, \Fa$ and $\TT$.
Let $W/G$ denote the orbit space and let $\Oa \in W/G$ be an orbit.
Then the state sum of $\Oa$ evaluated at $\chi$ is
\begin{eqnarray*}
Z_{\Oa} (\chi) & = & \sum_{F\in \Fa (\Oa)} T_{\Oa} (F)(\chi) = 
   \sum_{F:\Ea (\Oa)\to \bool,~ \TT_{\Oa} (F)=\chi} 1 = |\Ea (\Oa)| \\
& = & \sum_{w\in \Oa} |G_w| = |\Oa|\cdot |G_w| = |G|.
\end{eqnarray*}
The space $W$ can be written as a disjoint union
\[ W = \bigsqcup_{\Oa \in W/G} \Oa. \]
Applying Theorem \ref{thm.zdisjunion} to this decomposition shows that
\[ Z_W = \widehat{\bigotimes}_m \{ Z_{\Oa} ~|~ \Oa \in W/G \}. \]
Evaluating this on $\chi,$ we get
\[ Z_W (\chi) = |W/G|\cdot |G|. \]
On the other hand,
\[ Z_W (\chi) = \sum_{F\in \Fa (W)} T_W (F)(\chi) = \sum_{w\in W} |G_w| =
    \sum_{g\in G} |W^g|, \]
where
$W^g = \{ w\in W ~|~ wg=w \}.$
Thinking of $g$ as a permutation of $X$, $g$ has a unique cycle decomposition.
Let $c(g)$ be the number of cycles. Then $|W^g| = |\fun_Y (X)^g| = |Y|^{c(g)}$
and we arrive at
\[ |\fun_Y (X)/G| = \frac{1}{|G|} \sum_{g\in G} |Y|^{c(g)}, \]
which is P\'olya's enumeration theorem. Applying the above method directly
to $W=X$ instead of $W=\fun_Y (X)$, one obtains Burnside's lemma
\[ |X/G| = \frac{1}{|G|} \sum_{g\in G} |X^g|. \]
These applications suggest that positive TFTs may be instrumental in solving other
types of combinatorial problems as well.

\section{Examples}
\label{sec.examples}

We describe five concrete positive TFTs, ranging from simple
toy examples to nontrivial theories. The latter involve the
Novikov signature of compact oriented manifolds. For these
theories, Novikov additivity is the key ingredient to prove
the gluing axiom for the action-exponentials. In the last example,
\ref{ssec.numbertheory}, we observe that certain arithmetic functions
used in number theory can be expressed as state sums of positive TFTs.

\subsection{The Max TFT}

A commutative monoid $M$ is given by $M= [0,1]$, the unit interval,
with operation $a\cdot b = \max (a,b)$. The unit element is $0$, as
$\max (a,0)=a=\max (0,a)$ for any $a\in [0,1]$. By
Lemma \ref{lem.commmonoidstrictmoncat}, $(M,\cdot, 0)$ determines
a small strict monoidal category $\catm$.
A system $\Fa$ of fields is given by taking $\Fa (W), \Fa (M)$ to be the
set of continuous functions on $W,M,$ respectively, with values in $[0,1]$.
The restrictions are the obvious restrictions of continuous maps to 
subspaces. The action of homeomorphisms on fields is given by composition.
A system $\TT$ of $\catm$-valued action exponentials is given by
\[ \TT_W (f) = \max_{x\in W} f(x),~ f\in \Fa (W), \]
if $W\not= \varnothing$ and $\TT_{\varnothing} (f)=0$. Axiom (TDISJ)
is satisfied, since
\begin{eqnarray*}
\TT_{W\sqcup W'} (f) & = & \max_{x\in W\sqcup W'} f(x) =
  \max (\max_W f(x), \max_{W'} f(x)) \\
& = & (\max_W f(x))\otimes (\max_{W'} f(x)) =
 \TT_W (f|_W) \otimes \TT_{W'} (f|_{W'}). 
\end{eqnarray*}
Axiom (TGLUE) holds, for
\begin{eqnarray*}
\TT_{W} (f) & = & \max_{x\in W} f(x) =
  \max (\max_{W'} f(x), \max_{W''} f(x)) \\
& = & (\max_{W''} f(x))\circ (\max_{W'} f(x)) =
 \TT_{W''} (f|_{W''}) \circ \TT_{W'} (f|_{W'}). 
\end{eqnarray*}
Let $\phi: W\to W'$ be a homeomorphism of bordisms. As 
\[ \TT_W (\phi^\ast f) = \max_{x\in W} f(\phi (x)) =
  \max_{y\in W'} f(y) = \TT_{W'} (f), \]
for any $f\in \Fa (W'),$ axiom (THOMEO) is satisfied as well.
Consequently, there is a positive TFT $Z^{\max}$ associated to $(\Fa, \TT)$.
For a boundary condition $f\in \Fa (\partial W),$ the state sum
$Z^{\max}_W$ yields a value $Z^{\max}_W (f)\in Q_S (\catm)$. Since
$\catm$ has only one object, the unit object $I$, the state sum subject
to the boundary condition is completely described by the values
$Z^{\max}_W (f)_{II} (a) \in S,$
where $S$ is the ground semiring and $a$ is a real numbers, $0 \leq a \leq 1$.
In fact,
\[ Z^{\max}_W (f)_{II} (a) = \sum_{F\in \Fa (W,f),~ \max_W F =a} 1. \]
For $a=0$,
\[ Z^{\max}_W (f)_{II} (0) = \sum_{F\in \Fa (W,f),~ F\equiv 0} 1 =
  \begin{cases} 1,& \text{ if } f\equiv 0, \\ 0,& \text{ else.} \end{cases} \]

\subsection{The Intermediate Value TFT}

Let $M = \{ p_+, p_- \}$ and $N = \{ q_+, q_- \}$ be two $0$-dimensional
manifolds, each consisting of two distinct points. If the ground semiring $S$
has more than one element, i.e. $0\not= 1$, then the following very simple positive TFT
is capable of distinguishing the two $(0+1)$-dimensional
bordisms $W_{=}$ and $W_{\supset \subset}$
\[ 
\xygraph{ !{0;/r1pc/:}
{p_+} [r] !{\xcaph@(0)}  !{\xcaph@(0)} [r] {q_+} [dllll]  {p_-} [r] !{\xcaph@(0)}  !{\xcaph@(0)} 
[r] {q_-}
}, \hspace{2cm}
\xygraph{ !{0;/r1pc/:}
 {p_+} [r] !{\xcaph@(0)} !{\hcap} [rr] !{\hcap-} !{\xcaph@(0)} [r] {q_+}
[dllllll] {p_-} [r] !{\xcaph@(0)} 
 [rr]  !{\xcaph@(0)} [r] {q_-}
}.
\] \\

\noindent Both are bordisms from $M$ to $N$ so that
$\partial W_{=} = M\sqcup N = \partial W_{\supset \subset}.$
Let $(F_2,\cdot,1)$ be the commutative monoid of the field with two elements.
That is, $F_2 = \{ 0,1 \}$ and
 $0\cdot 0 = 1\cdot 0 = 0\cdot 1 =0,$ $1\cdot 1 = 1$.
By Lemma \ref{lem.commmonoidstrictmoncat}, $(F_2,\cdot, 1)$ determines
a small strict monoidal category $\catf_2$.
A system $\Fa$ of fields is given by taking $\Fa (W), \Fa (M)$ to be the
set of continuous functions on $W,M,$ respectively, with values in $[-1,1]$.
The restrictions are the obvious restrictions of continuous maps to 
subspaces. The action of homeomorphisms on fields is given by composition.
A system $\TT$ of $\catf_2$-valued action exponentials is given by
\[ \TT_W (f) = \begin{cases} 1,& \text{ if } 0\not\in f(W) \\
      0, & \text{ if } 0\in f(W). \end{cases} \]
Note that $\TT_{\varnothing} (f)=1 = \id_I$. Let us verify that axiom (TDISJ)
is satisfied. Let $f\in \Fa (W\sqcup W')$ be a field on the disjoint union of
$W$ and $W'$. Suppose that $f$ takes on the value $0$ somewhere on
$W\sqcup W'$. Then either $0\in f(W)$ or $0\in f(W')$ (or both); say
$0\in f(W)$. Hence
\[ \TT_{W\sqcup W'} (f) = 0 = 0\cdot \TT_{W'} (f|_{W'}) =
\TT_W (f|_W) \otimes \TT_{W'} (f|_{W'}). 
\]
If $0\not\in f(W\sqcup W'),$ then $0\not\in f(W)$ and $0\not\in f(W')$.
So in this case
\[ \TT_{W\sqcup W'} (f) = 1 = 1\cdot 1 =
\TT_W (f|_W) \otimes \TT_{W'} (f|_{W'}). 
\]
We conclude that (TDISJ) holds. 
Axiom (TGLUE) is proven in a similar manner:
Let $f\in \Fa (W)$ be a field on $W= W'\cup_N W''$. 
If $f$ takes on the value $0$ somewhere on
$W$, then $0\in f(W')$ or $0\in f(W'')$; say
$0\in f(W')$. Hence
\[ \TT_{W} (f) = 0 = \TT_{W''} (f|_{W''}) \cdot 0 =
\TT_{W''} (f|_{W''}) \circ \TT_{W'} (f|_{W'}). 
\]
On the other hand, if $0\not\in f(W),$ then $0\not\in f(W')$ and $0\not\in f(W'')$.
So in this case
\[ \TT_{W} (f) = 1 = 1\cdot 1 =
\TT_{W''} (f|_{W''}) \circ \TT_{W'} (f|_{W'}). 
\]
This establishes (TGLUE).
Let $\phi: W\to W'$ be a homeomorphism. 
Since $\phi (W)=W'$, we have for any $f\in \Fa (W'),$ 
\[ (\phi^\ast f)(W)=f(\phi (W)) = f(W'). \]
Therefore, $0\in (\phi^\ast f)(W)$ if and only if $0\in f(W').$
This implies that $\TT_W (\phi^\ast f) = \TT_W' (f)$, which verifies
axiom (THOMEO).
Consequently, there is a positive TFT $Z^{\iv}$ associated to $(\Fa, \TT)$,
the \emph{intermediate value TFT}. We shall now show that the
invariant $Z^{\iv}_W \in Z(M\sqcup N)$ distinguishes the two bordisms
$W_{=}$ and $W_{\supset \subset}$ depicted above.
Let $f_0 \in \Fa (M\sqcup N)$ be the boundary condition
\[ f_0 (p_+)=1,~  f_0 (p_-)=-1,~  f_0 (q_+)=1,~  f_0 (q_-)=-1. \]
We calculate the state sum with boundary condition $f_0$ for a
bordism $W$ from $M$ to $N$. As $\catf_2$ has only one object $I$,
all the information is contained in $Z^{\iv}_W (f_0)_{II} (\gamma)$,
where $\gamma:I \to I$ is a morphism, that is, $\gamma \in \{ 0,1 \}$.
For $\gamma =1$ and $F\in \Fa (W,f_0)$, 
\[ T_W (F)_{II} (1) =
\begin{cases} 1,& \text{ if } \TT_W (F)=1 \\
 0, & \text{ if } \TT_W (F)\not=1
\end{cases} \]
Since $\TT_W (F) =1$ if and only if $0\not\in F(W)$, this translates to
\[ T_W (F)_{II} (1) =
\begin{cases} 1,& \text{ if } 0\not\in F(W) \\
 0, & \text{ if } 0\in F(W).
\end{cases} \]
Hence for the state sum,
\[ Z^{\iv}_W (f_0)_{II} (1) =
 \sum_{F\in \Fa (W,f_0)} T_W (F)_{II} (1) =
 \sum_{F\in \Fa (W,f_0),~ 0\not\in F(W)} 1. \]
On the bordism $W_{=},$ there is a continuous function 
$F: W_{=} \to [-1,1]$ such that $F|_{\partial W_{=}} = f_0$ and
$0\not\in F(W_{=})$, e.g. $F\equiv 1$ on the strand that joins $p_+$
and $q_+$ and $F\equiv -1$ on the strand that joins $p_-$ and $q_-$.
Thus the state sum is of the form
\[ Z^{\iv}_{W_{=}} (f_0)_{II} (1) = 1+s \]
for some $s\in S$. But on $W_{\supset \subset}$ every continuous
function $F: W_{\supset \subset} \to [-1,1]$, which agrees with $f_0$
on the boundary, \emph{must} take on the value $0$ somewhere by the
intermediate value theorem. This shows that
\[  Z^{\iv}_{W_{\supset \subset}} (f_0)_{II} (1) = 0. \]
Since $S$ is complete, it is zerosumfree. So if $1+s=0$, then $1=0$,
which contradicts our assumption $1\not= 0$. Hence
\[  Z^{\iv}_{W_{=}} (f_0)_{II} (1) \not=
   Z^{\iv}_{W_{\supset \subset}} (f_0)_{II} (1). \]

\subsection{The Signature TFT}

Suppose that $n+1$ is divisible by $4$, say $n+1 =4k$, and assume that 
bordisms $W$ are oriented.
The \emph{intersection form} of $W$ is the symmetric bilinear form
\[ H^{2k} (W,\partial W;\real)\times H^{2k} (W, \partial W;\real) 
  \longrightarrow \real \]
given by evaluation of the cup product on the fundamental class
$[W]\in H_{4k} (W,\partial W),$ $(x,y)\mapsto \langle x\cup y, [W] \rangle$.
(If the boundary of $W$ is empty, this form is nondegenerate.)
The \emph{signature} of $W$, $\sigma (W)$, is the signature of this
bilinear form. Suppose that $W=W' \cup_N W''$ is obtained by gluing along $N$ the
bordism $W'$ with outgoing boundary $N$ to the bordism $W''$ whose incoming
boundary is also $N$. The orientation of $W$ is to restrict to the orientations of 
$W'$ and $W''$. Then \emph{Novikov additivity} asserts that
\[ \sigma (W) = \sigma (W') + \sigma (W''), \]
see \cite[p. 154]{memgeorgesderham} and \cite{wallnonadd}.
(Novikov and Rohlin were actually interested in defining
Pontrjagin-Hirzebruch classes modulo a prime $p$. The additivity property
for the signature enabled them to find such a definition.) The proof of
Novikov additivity is provided in \cite[Prop. (7.1), p. 588]{ASieoiii}. 
It is important here that $N$ be closed as a manifold. If $W', W''$ are
allowed to have corners and one glues along a manifold with boundary
$(N,\partial N)$, then the signature is generally non-additive but can be calculated
using a formula of Wall, which contains a Maslov triple index correction term.
More recently, Novikov has pointed out that his additivity property is
equivalent to building a nontrivial topological quantum field theory.
We shall now proceed to give a precise construction of such a
\emph{signature TFT} $Z^{\sign}$ using the framework of the present paper. 
Since the signature can be negative, this example shows,
prima facie paradoxically, that invariants which
require additive inverses can also often be expressed by positive TFTs.
To do this, one exploits that the monoidal category $\catc$ can be quite arbitrary.\\ 

By Lemma \ref{lem.commmonoidstrictmoncat}, the additive monoid (group)
$(\intg, +, 0)$ of integers determines a small strict monoidal category $\catz$.
A system $\Fa$ of fields is given by admitting only a single unique field on each manifold,
that is, by taking $\Fa (W) = \{ \star \}$, 
$\Fa (M) = \{ \star \}$, where $\star$ denotes a single element, which we may
interpret as the unique map to a point.
A system $\TT$ of $\catz$-valued action exponentials on oriented bordisms $W$ is given by
\[ \TT_W (\star) = \sigma (W). \]
Note that $\TT_{\varnothing} (\star)=\sigma (\varnothing)=0 = \id_I$. 
In axiom (TDISJ) it is of course now assumed that $W\sqcup W'$ is oriented
in agreement with the orientations of $W$ and $W'$, similarly for axiom (TGLUE). 
Then (TDISJ) is satisfied, since
\[ \TT_{W\sqcup W'} (\star) = \sigma (W\sqcup W') = \sigma (W) + \sigma (W')
  = \TT_W (\star|_W) + \TT_{W'} (\star|_{W'}) =
 \TT_W (\star|_W) \otimes \TT_{W'} (\star|_{W'}). \]
By Novikov additivity, we have for $W= W' \cup_N W'',$
\[ \TT_{W} (\star) = \sigma (W) = \sigma (W'') + \sigma (W')
  = \TT_{W''} (\star|_{W''}) \circ \TT_{W'} (\star|_{W'}), \] 
which proves (TGLUE).
Let $\phi: W\to W'$ be an orientation-preserving homeomorphism.
Then, as the induced map on cohomology
$\phi^\ast: H^\ast (W',\partial W';\real)\to H^\ast (W,\partial W;\real)$
is a ring-isomorphism preserving the cup-product structure, we have
$\sigma (W) = \sigma (W')$. 
Therefore,
\[ \TT_W (\phi^\ast \star) = \sigma (W) = \sigma (W') = \TT_{W'} (\star), \] 
which verifies axiom (THOMEO).
Consequently, there is a positive TFT $Z^{\sign}$ associated to $(\Fa, \TT)$,
the \emph{signature TFT}. If $a$ is any integer (a morphism in $\catz$), then
\[ Z^{\sign}_W (\star)_{II} (a) = T_W (\star)_{II} (a) =
 \begin{cases} 1,& \text{ if } a=\TT_W (\star) = \sigma (W) \\ 
  0,& \text{ otherwise}. \end{cases} \]
So the signature state sum on a morphism is a Kronecker delta function,
\[ Z^{\sign}_W (\star)_{II} (a) = \delta_{a, \sigma (W)}. \]

\subsection{A Twisted Signature TFT}
\label{ssec.twistedsigntft}

The signature TFT can be twisted by allowing nontrivial fields, as we shall now
explain. Let $F$ be a closed, oriented, topological manifold whose dimension is such that
$n+1+\dim F$ is divisible by $4$. Let $G$ be a topological group acting
continuously on $F$ by orientation preserving homeomorphisms.
Let $EG \to BG$ be the universal principal $G$-bundle.
The associated fiber bundle $E\to BG$ with fiber $F$ is given by the total
space $E= EG \times_G F$, i.e. the quotient of $EG \times F$ by the diagonal
action of $G$. The projection is induced by $EG\times F \to EG \to BG$.
Let $W$ be an oriented $(n+1)$-dimensional bordism with boundary
$\partial W$.
Principal $G$-bundles over $W$ have the form of a pullback $f^\ast EG$ for some
continuous map $f: W\to BG$. The associated $F$-bundle  
$p: f^\ast E \to W$ is a compact manifold with boundary $(f|_{\partial W})^\ast E$.
(Note that the bundle $f^\ast E$ is canonically isomorphic as a bundle
to $(f^\ast EG) \times_G F$.)
This manifold receives a canonical orientation as follows: Cover $W$ by open
sets $U_\alpha$ over which there are trivializations $\phi_\alpha: p^{-1} (U_\alpha)
\cong U_\alpha \times F$. The orientation of $W$ induces an orientation of every
$U_\alpha$. Then $U_\alpha \times F$ receives the product orientation and
$p^{-1}(U_\alpha)$ receives the orientation such that $\phi_\alpha$ is
orientation preserving. Taking a trivialization $\phi_\beta$ over $U_\beta$
with nonempty $U_{\alpha \beta} = U_\alpha \cap U_\beta$, the homeomorphism
$\phi_\beta \phi_\alpha^{-1}: U_{\alpha \beta} \times F \to U_{\alpha \beta}\times F$
is orientation preserving as the structure group $G$ acts on $F$ in an orientation
preserving manner. Thus $p^{-1}(U_{\alpha \beta})$ receives the same orientation,
regardless of whether $\phi_\alpha$ or $\phi_\beta$ is used. Since the
$p^{-1}(U_\alpha)$ cover $f^\ast E$, the latter is oriented. 

We shall again use the strict monoidal category $\catz$ of integers, as introduced
in the previous example.
A system $\Fa$ of fields is given by 
\[ \Fa (W) = \{ f:W\to BG ~|~ f \text{ continuous} \}, \]
similarly for closed $n$-manifolds $M$.
(Again, the restrictions are the obvious ones and the action of homeomorphisms 
on fields is given by composition.)
A system $\TT$ of $\catz$-valued action exponentials on oriented bordisms $W$ is given by
the signature of the $F$-bundle pulled back from $BG$ under $f$,
\[ \TT_W (f) = \sigma (f^\ast E). \]
The signature is defined, since $f^\ast E$ is canonically oriented as discussed above.
Note that $\TT_{\varnothing} (f)=\sigma (\varnothing)=0 = \id_I$. 
Axiom (TDISJ) is satisfied, since for a map $f: W\sqcup W' \to BG$,
\begin{eqnarray*}
 \TT_{W\sqcup W'} (f) & = & \sigma (f^\ast E) = 
 \sigma ((f|_W)^\ast E \sqcup (f|_{W'})^\ast E) \\
& = &\sigma ((f|_W)^\ast E) + \sigma ((f|_{W'})^\ast E)
  = \TT_W (f|_W) + \TT_{W'} (f|_{W'}) \\
& = & \TT_W (f|_W) \otimes \TT_{W'} (f|_{W'}). 
\end{eqnarray*}
For $W= W' \cup_N W''$ and $f:W\to BG,$ the manifold $X = f^\ast E$
is a bordism $X= X' \cup_Y X''$ obtained from gluing along
$Y = (f|_N)^\ast E$ the bordism $X' = (f|_{W'})^\ast E$ with outgoing
boundary $Y$ to the bordism $X'' = (f|_{W''})^\ast E$ whose incoming
boundary is also $Y$. We observe that the orientation of $X$ restricts
to the orientations of $X'$ and $X''$. 
Thus by Novikov additivity, we have 
\[ \TT_{W} (f) = \sigma (X) = \sigma (X'') + \sigma (X')
  = \TT_{W''} (f|_{W''}) \circ \TT_{W'} (f|_{W'}), \] 
which proves (TGLUE).
Let $\phi: W\to W'$ be an orientation preserving homeomorphism
and let $f\in \Fa (W')$ be a field on $W'$.
Pulling $f^\ast E$ back to $W$, we obtain a cartesian square
\[ \xymatrix{
\phi^\ast f^\ast E \ar[r]^{\Phi} \ar[d] & f^\ast E \ar[d] \\
W \ar[r]^{\cong}_{\phi} & W'.
} \]
Then $\Phi$ is a homeomorphism and preserves orientations, since
$\phi$ does so. Consequently, $\sigma (\phi^\ast f^\ast E) = 
\sigma (f^\ast E)$ and
\[ \TT_W (\phi^\ast f) = \sigma ((f\phi)^\ast E) = \sigma (f^\ast E) = \TT_{W'} (f), \] 
which verifies axiom (THOMEO).
Therefore, there is a positive TFT $\widetilde{Z}^{\sign}$ associated to $(\Fa, \TT)$,
the \emph{$F$-twisted signature TFT}. 
If $f^\ast E \to \partial W$ is an $F$-bundle over the boundary, given by a map
$f:\partial W\to BG$, and
$a$ any integer (a morphism in $\catz$), then the $F$-twisted signature state sum
$\widetilde{Z}^{\sign} (f)$ is the counting function whose value on $a$ is 
\[ \widetilde{Z}^{\sign}_W (f)_{II} (a) = \sum_A 1, \]
where $A$ is the set of all $F:W\to BG$ extending $f$ such that 
$\sigma (F^\ast E)=a$.
So roughly, the signature state sum on a morphism $a$ ``counts'' those $F$-bundles
over $W$ that extend $f^\ast E\to \partial W$ and have signature $a$.
Naturally, since one is not summing over distinct isomorphism types of $F$-bundles,
that is, distinct homotopy classes of maps $W\to BG$, one generally picks up either
no summand or uncountably many. So, as pointed out before, what may primarily be
of interest is the zero/nonzero-pattern contained in the invariant 
$\widetilde{Z}^{\sign}_W$, not the actual value in $S$. 
The reason why this TFT is called \emph{twisted} is that, contrary to the Euler
characteristic, the signature is generally not multiplicative for fiber bundles:
If $F\to E\to B$ is any fiber bundle of closed, oriented manifolds, then in
general $\sigma (E)\not= \sigma (B)\sigma (F)$, see
\cite{atiyah}, \cite{meyer}, \cite{meyer2}. Instead, at least in the smooth context, $\sigma (E)$
is a product of a Chern-character associated to the $K$-theory signature of
the flat middle-degree cohomology bundle and the Hirzebruch $L$-class of the base.
Thus the present action exponential $\TT$ is a highly nontrivial invariant and
generally hard to compute.

\subsection{Relation to Number Theoretic Quantities}
\label{ssec.numbertheory}

Certain number theoretic quantities, such as arithmetic functions, can be rendered as
state sums of a positive TFT. The multiplicativity of such functions on coprime integers
can then be deduced from the multiplicativity of state sums on disjoint manifolds.
Our method of implementing arithmetic functions rests on two observations:
First, natural numbers can be encoded as manifolds in such a way that disjointness
of manifolds corresponds to coprimality of numbers, intersection corresponds to
taking the greatest common divisor and disjoint union corresponds to taking products
of coprime numbers. Second, the set of divisors of a natural number displays
exactly the same characteristics as fields on manifolds according to
Definition \ref{def.fields}.\\

Let $\Na$ be the set of all finite multisets $W$ of prime numbers and let
$\nat_{>0} = \{ 1,2,3,4,\ldots \}.$ A map $\Na \to \nat_{>0}$ is given by
sending $W$ to
\[ n(W) = \prod_{p\in W} p, \]
where $p$ occurs as many times in the product as its multiplicity in $W$ indicates.
We have $n(\varnothing)=1.$ In the other direction, a map $\nat_{>0} \to \Na$
is given by factoring an $a\geq 2$ into primes
\[ a = p_1^{\mu_1} \cdots p_k^{\mu_k},~ \mu_i \geq 1, \]
and defining $W(a)$ to be the multiset containing each $p_i$ with multiplicity $\mu_i$
for all $i=1, \ldots, k$. Furthermore, $W(1)=\varnothing$. Then
$n(W(a))=a$ for all $a\in \nat_{>0}$ and $W(n(W'))=W'$ for all $W' \in \Na$.
Thus the above maps define a bijection $\Na \cong \nat_{>0}$. Recall that the
intersection $A\cap B$ of two multisets $A,B$ assigns the multiplicity
$\min (\mu_A (x), \mu_B (x))$ to an element $x$, where $\mu_A (x), \mu_B (x)$
are the multiplicities of $x$ in $A,B$, respectively. (If $x\not\in A$, then
$\mu_A (x)=0$.) The formula
$W(a)\cap W(b) = W(\gcd (a,b))$
holds. In particular, $W(a)$ and $W(b)$ are disjoint if and only if $a$ and $b$ are
coprime. In that case,
\[ W(a)\sqcup W(b) = W(ab). \]
For $W\in \Na$, let $\Fa (W)$ be the set of all multisubsets $W' \subset W$.
The image of $\Fa (W(a))$ under $\Na \to \nat_{>0}$ is the set
of all divisors of $a$. An inclusion $W_0 \subset W$ induces a restriction map
\[ 
\Fa (W) \longrightarrow \Fa (W_0),~
W' \mapsto W' \cap W_0
\]
If $W_1 \subset W_0,$ then the composition $\Fa (W)\to \Fa (W_0)\to \Fa (W_1)$
agrees with $\Fa (W)\to \Fa (W_1)$, since
$(W' \cap W_0)\cap W_1 = W' \cap W_1$. Thus axiom (FRES) is satisfied.
Given disjoint multisets $W,W' \in \nat$, the map
\[ \begin{array}{rcl}
\Fa (W)\times \Fa (W') & \longrightarrow & \Fa (W\sqcup W')\\
(V,V') & \mapsto & V\sqcup V'
\end{array} \]
is inverse to the product map of restrictions
\[ \begin{array}{rcl}
\Fa (W\sqcup W') & \longrightarrow & \Fa (W)\times \Fa (W')\\
W_0 & \mapsto & (W_0 \cap W, W_0 \cap W').
\end{array} \]
Thus the latter map is a bijection and axiom (FDISJ) holds.
(If $W$ and $W'$ are not disjoint, then the above maps cease to be each others'
inverses. For example, when $W=\{ 2,3,5 \},$ $W' = \{ 2,3,7 \}$, $V=\{ 2,5 \}$
and $V' = \{ 3,7 \},$ then $((V\sqcup V')\cap W, (V\sqcup V')\cap W') =
(\{ 2,3,5 \}, \{ 2,3,7 \}) \not= (V,V').$)
Recall that a bijection of multisets is a bijection of the underlying sets which preserves
multiplicities. A homeomorphism (i.e. bijection) $\phi: W\to W'$ induces
$\phi^\ast: \Fa (W')\to \Fa (W)$ by
\[ \phi^\ast (W'') = \{ (\phi^{-1} (p),\mu) ~|~ (p,\mu)\in W'' \} \]
for $W'' \subset W'$, where we have used the notation $(\text{element},\text{multiplicity})$.
Then indeed $\phi^\ast (W'')\subset W$ as multisets, since
$\mu_{W''}(p)\leq \mu_{W'}(p)$ and $\mu_W (\phi^{-1}(p))=\mu_{W'}(p)$
imply
\[ \mu_{\phi^\ast W''}(\phi^{-1}(p)) = \mu_{W''}(p)\leq
  \mu_{W'} (p) = \mu_W (\phi^{-1}(p)). \]
Evidently, $(\psi \circ \phi)^\ast = \phi^\ast \circ \psi^\ast$ and
$\id^\ast_W = \id_{\Fa (W)}$. It follows that $\phi^\ast: \Fa (W')\to \Fa (W)$
is a bijection. A commutative square
\[ \xymatrix{
W \ar[r]^{\phi} & W' \\
W_0 \ar@{^{(}->}[u] \ar[r]_{\phi_0} & W'_0 \ar@{^{(}->}[u]
} \]
with $\phi, \phi_0$ homeomorphisms, induces a commutative square
\[ \xymatrix{
\Fa (W') \ar[d]_{\operatorname{res}} \ar[r]^{\phi^\ast} & \Fa (W) \ar[d]^{\operatorname{res}} \\
\Fa (W'_0)  \ar[r]_{\phi^\ast_0} & \Fa (W_0)
} \]
on fields, since for $q\in W_0$,
\begin{eqnarray*}
\mu_{\phi^\ast_0 \operatorname{res} (W'')}(q) & = &
  \mu_{\operatorname{res}(W'')}(\phi_0 (q)) = \mu_{W'' \cap W'_0}(\phi_0 (q)) \\
& = & \min (\mu_{W''} (\phi (q)), \mu_{W'_0} (\phi_0 (q)) =
   \min (\mu_{W''} (\phi (q)), \mu_{W_0} (q)) \\
& = & \min (\mu_{\phi^\ast (W'')}(q), \mu_{W_0} (q)) = 
  \mu_{\phi^\ast (W'')\cap W_0} (q) \\
& = & \mu_{\operatorname{res} \phi^\ast (W'')}(q).
\end{eqnarray*}
Thus axiom (FHOMEO) is satisfied and we conclude that the system of divisor sets constitutes
a valid system $\Fa$ of fields. \\

This setup opens the door to studying arithmetic functions as state sums of positive
TFTs by considering appropriate monoidal categories $\catc$ and actions
$\TT_W: \Fa (W)\to \Mor (\catc)$. The simplest example is the divisor function
\[ d(a) = \sum_{n|a} 1. \]
It is multiplicative, that is, $d(ab) = d(a)d(b)$ for coprime $a,b$.
From the viewpoint of state sums, this follows from
\[ Z_{W(ab)} = Z_{W(a)\sqcup W(b)} = Z_{W(a)} \hotimes_m Z_{W(b)} \]
by taking $S=\nat^{\infty}$,  the complete semiring of Example \ref{exple.completesemirings},
$\catc$ the trivial monoidal category with $\operatorname{Ob}(\catc)=\{ I \}$,
$\Mor (\catc)=\{ \id_I \},$ and perforce
$\TT_W (W') = \id_I$ for all $W' \in \Fa (W)$.

\bibliographystyle{amsalpha}
\bibliography{../../../mybib}

\end{document}